\def\mindex#1{\index{#1}}
 \def\oc{\star}   
\def\ocp{*}   
\newcommand{\qsaprobe}{{\scalebox{1.1}{$\upxi$}}}  
\newcommand{\bfqsaprobe}{{\scalebox{1.1}{$\bm{\upxi}$}}}  
\def\BE{{\cal B}}
\def\Tdiff{\mathcal{D}}
\def\ODEstate{\Uptheta} 
\def\bfODEstate{\bm{\Uptheta}}
\def\odestate{\upvartheta}
\def\Nsam{N}
\def\fee{\upphi}
\def\Hor{\mathcal{T}} 
\def\hor{\uptau}   
\def\elig{\zeta}
\def\uc{\underline{c}}
\def\uQ{\underline{Q}}
\def\disc{\gamma}
\newcommand{\bbblot}{\raise1pt\hbox{\vrule height .4ex width .4ex depth .05ex}}
\long\def\defbox#1{\framebox[.9\hsize][c]{\parbox{.85\hsize}{%
\parindent=0pt
\baselineskip=12pt plus .1pt      
\parskip=6pt plus 1.5pt minus 1pt 
 #1}}}
\long\def\beginbox#1\endbox{\subsection*{}%
\hbox{\hspace{.05\hsize}\defbox{\medskip#1\bigskip}}%
\subsection*{}}
\def\endbox{}
 \def\archival#1{} 
\def\llbracket{[\![}
\def\rrbracket{]\!]}
\def\FRAC#1#2#3{\genfrac{}{}{}{#1}{#2}{#3}}
\def\ddt{{\mathchoice{\FRAC{1}{d}{dt}}%
{\FRAC{1}{d}{dt}}%
{\FRAC{3}{d}{dt}}%
{\FRAC{3}{d}{dt}}}}
\def\ddtp{{\mathchoice{\FRAC{1}{d^{\hbox to 2pt{\rm\tiny +\hss}}}{dt}}%
{\FRAC{1}{d^{\hbox to 2pt{\rm\tiny +\hss}}}{dt}}%
{\FRAC{3}{d^{\hbox to 2pt{\rm\tiny +\hss}}}{dt}}%
{\FRAC{3}{d^{\hbox to 2pt{\rm\tiny +\hss}}}{dt}}}}
\def\ddyp{{\mathchoice{\FRAC{1}{d^{\hbox to 2pt{\rm\tiny +\hss}}}{dy}}%
{\FRAC{1}{d^{\hbox to 2pt{\rm\tiny +\hss}}}{dy}}%
{\FRAC{3}{d^{\hbox to 2pt{\rm\tiny +\hss}}}{dy}}%
{\FRAC{3}{d^{\hbox to 2pt{\rm\tiny +\hss}}}{dy}}}}
\def\half{{\mathchoice{\FRAC{1}{1}{2}}%
{\FRAC{1}{1}{2}}%
{\FRAC{3}{1}{2}}%
{\FRAC{3}{1}{2}}}}
\def\fourth{{\mathchoice{\FRAC{1}{1}{4}}%
{\FRAC{1}{1}{4}}%
{\FRAC{3}{1}{4}}%
{\FRAC{3}{1}{4}}}}
\newsavebox{\junk}
\savebox{\junk}[1.6mm]{\hbox{$|\!|\!|$}}
\def\limsup{\mathop{\rm lim{\,}sup}}
\def\argmin{\mathop{\rm arg{\,}min}}
\def\argmax{\mathop{\rm arg{\,}max}}
\def\Dds{\text{\rm F}}
\def\state{{\sf X}}
\def\ustate{{\sf U}} 
\def\zstate{{\sf Z}}
\def\bfmath#1{{\mathchoice{\mbox{\boldmath$#1$}}%
{\mbox{\boldmath$#1$}}%
{\mbox{\boldmath$\scriptstyle#1$}}%
{\mbox{\boldmath$\scriptscriptstyle#1$}}}}
\def\bfPhi{\bfmath{\Phi}}
\def\bfGamma{\bfmath{\Gamma}}
\def\bfmu{\bfmath{u}} 
\def\bfmx{\bfmath{x}}
\def\bfmN{\bfmath{N}}
\def\bfmX{\bfmath{X}}
\def\bfmY{\bfmath{Y}}
\def\bfmhhaY{\bfmath{\hhaY}} 
\def\bfmhhaY{\hbox to 0pt{$\widehat{\bfmY}$\hss}\widehat{\phantom{\raise 1.25pt\hbox{$\bfmY$}}}}
\def\haA{\widehat A}
\def\clE{{\cal E}}
\def\clG{{\cal G}}
\def\clH{{\cal H}}
\def\clP{{\cal P}}
\def\eqdef{\mathbin{:=}}
\def\Prob{{\sf P}}
\def\Expect{{\sf E}}
\def\lgmath#1{{\mathchoice{\mbox{\large #1}}%
{\mbox{\large #1}}%
{\mbox{\tiny #1}}%
{\mbox{\tiny #1}}}}
\def\Zero{{\mathchoice{\lgmath{\sf 0}}%
{\mbox{\sf 0}}%
{\mbox{\tiny \sf 0}}%
{\mbox{\tiny \sf 0}}}}
 \def\Tol{\text{\rm Tol}}
\def\ind{\bbbone}
 \def\epsy{\varepsilon}
\def\varble{\,\cdot\,}
\def\formtmp#1#2{{\vskip12pt\noindent\fboxsep=0pt\colorbox{#1}{\vbox{\vskip3pt\hbox to \textwidth{\hskip3pt\vbox{\raggedright\noindent\textbf{#2\vphantom{Qy}}}\hfill}\vspace*{3pt}}}\par\vskip2pt%
\noindent\kern0pt}}
\titleformat\subparagraph[runin]
\titlespacing\subparagraph{0pt}
                       {.1ex minus 0.2ex}
                       {.75em}
\newenvironment{programcode}[1]{\ignorespaces\def\stmtopen##1{##1}%
\pagebreak[3]%
\formtmp{programcode}{#1}%
\endlinechar=-1\relax%
\nopagebreak[4]}{%
\noindent\textcolor{programcode}{\rule{\columnwidth}{1pt}}\vskip1pt\par\addvspace{\baselineskip}%
\endlinechar=13}
\newtheoremstyle{thm}{12pt}{15pt}%
     {\itshape}
     {}
     {\bfseries}
     {}
     {0pt}
     {\thmname{#1}\thmnumber{ #2.}\thmnote{ \textbf{(#3)}}\quad}
\def\barclE{\bar{\clE}}
\def\barf{{\overline {f}}}
\def\barg{{\overline {g}}}
\def\barv{{\overline {v}}}
\def\barL{{\bar{L}}}
\def\barsigma{\overline{\sigma}}
\def\ass(#1:#2){(#1\ref{#1:#2})}
\def\ritem#1{
\item[{\sf \ass(\current_model:#1)}]
}
\newenvironment{recall-ass}[1]{%
\begin{description}
\def\current_model{#1}}{
\end{description}
}
\def\sq{\hbox{\rlap{$\sqcap$}$\sqcup$}}
\def\qed{\ifmmode\sq\else{\unskip\nobreak\hfil
\penalty50\hskip1em\null\nobreak\hfil\sq
\parfillskip=0pt\finalhyphendemerits=0\endgraf}\fi}
\newcommand{\blot}{\vrule height 1.1ex width .9ex depth -.1ex }
\def\qedb{\ifmmode\blot\else{\vspace{-.2cm}\unskip\nobreak\hfil
\penalty50\hskip1em\null\nobreak\hfil\blot
\parfillskip=0pt\finalhyphendemerits=0\endgraf}\fi}
\newtheoremstyle{example}{15pt}{20pt}%
     {}
     {}
     {\bfseries}
     {}
     {1pt}
     {\thmname{#1}\thmnumber{ #2.}~\thmnote{\textit{\textbf{#3}}}%
     \\[.15cm]\unskip\nobreak}
\theoremstyle{example}
\newtheorem{exmp}{Example}[section]
\theoremstyle{example}
\newcounter{rmnum}
\newenvironment{romannum}{\begin{list}{{\upshape (\roman{rmnum})}}{\usecounter{rmnum}
\setlength{\leftmargin}{18pt}
\setlength{\rightmargin}{8pt}
\setlength{\itemindent}{2pt}
}}{\end{list}}
\newcounter{anum}
\newcommand{\field}[1]{\mathbb{#1}}
\def\Re{\field{R}}
\def\Prob{{\sf P}}
\def\Expect{{\sf E}}
\def\transpose{{\intercal}}
\def\st{\text{\rm s.t.\,}}
\def\argmin{\mathop{\rm arg\, min}}
\def\ind{\hbox{\large \bf 1}}
\def\trace{\hbox{\rm trace\,}}  
\def\epsy{\varepsilon}
\def\varble{\,\cdot\,}
\def\haY{\widehat{Y}}
\def\hhaY{\hbox to 0pt{$\haY$\hss}\widehat{\phantom{\raise 1.25pt\hbox{Y}}}}
\def\haA{\widehat A}
\def\haY{\widehat Y}
\def\bfPhi{\bfmath{\Phi}}
\def\fudgeBE{\varrho^\tinyBEsymbol}
\def\qsaDyn{\text{H}}
\def\grave{\blacktriangle}
\def\Kern{{\Bbbk}}
\def\tinyBEsymbol{\upvarepsilon}  
\def\zBE{z^\tinyBEsymbol}
\def\barzBE{\bar{z}^\tinyBEsymbol}
\def\clEBE{\clE^\tinyBEsymbol}
\def\barclEBE{\bar{\clE}^\tinyBEsymbol}
\def\clEBEvar{\clE^{\tinyBEsymbol \text{\tiny\rm var}}}
\def\barclEBEvar{\bar{\clE}^{\tinyBEsymbol \text{\tiny\rm var}}}
\def\kappaBE{\kappa^\tinyBEsymbol}
\def\regBCQ{\mathcal{R}}
\def\DPLP{DPLP}
 \theoremstyle{thm}
\newtheorem{theorem}{Theorem}[section]
\newtheorem{lemma}[theorem]{Lemma}
\newtheorem{proposition}[theorem]{Proposition}
\newtheorem{corollary}[theorem]{Corollary}
 \definecolor{programcode}{gray}{0.9}
\definecolor{MyDarkBlue}{cmyk}{0.5,0.1,0,0.9}
\newlength{\noteWidth}
\long\def\notes#1{\ifinner
{\footnotesize #1}
\else 
\marginpar{\parbox[t]{\noteWidth}{\raggedright\footnotesize#1}}
\fi\typeout{#1}}
 \def\notes#1{\typeout{check notes!!!}}   
\def\choreS#1{\notes{\color{BrickRed}For SPM:  #1}}
\def\choreP#1{\notes{\color{RoyalPurple}For PGM:  #1}}
\def\saveForBook#1{}
\Crefname{corollary}{Corollary}{Corollaries}
\Crefname{eqnarray}{eq.}{eqs.}
\Crefname{equation}{eq.}{eqs.}
\Crefname{figure}{Fig.}{Figs.}
\Crefname{tabular}{Tab.}{Tabs.}
\Crefname{table}{Tab.}{Tabs.}
\Crefname{proposition}{Prop.}{Propositions}
\Crefname{theorem}{Thm.}{Thms.}
\Crefname{definition}{Def.}{Defs.} 
\Crefname{section}{Section}{Sections}
\Crefname{lemma}{Lemma}{Lemmas}
\Crefname{assumption}{Assumption}{Assumptions}
\title{{\LARGE Convex Q-Learning}
\\[.5em]
\Large
Part 1:  Deterministic Optimal Control 
}
\author{Prashant G. Mehta and Sean P. Meyn
\thanks{PGM is with the Coordinated
  Science Laboratory and the Department of Mechanical Science and
  Engineering at the University of Illinois at Urbana-Champaign
  (UIUC);   SPM is with the Department of Electrical and Computer Engineering, University of Florida, Gainesville, FL 32611,   and Inria International Chair.   Financial support from ARO award W911NF1810334
and National Science Foundation award  EPCN 1935389
is gratefully acknowledged.}%
}
\date{}
\begin{document}
 
\maketitle

\begin{abstract}

It is well known that the extension of Watkins' algorithm to general  function approximation settings is challenging:  does the ``projected Bellman equation'' have a solution? If so, is the solution useful in the sense of generating a good policy?  And, if the preceding questions are answered in the affirmative,  is the algorithm consistent?  These questions are unanswered even in the special case of  Q-function approximations that are linear in the parameter.  The challenge seems paradoxical, given the long history of convex analytic approaches to dynamic programming.  

The paper begins with a brief survey of linear programming approaches to optimal control, leading to a particular `over parameterization’ that lends itself to applications in reinforcement learning.  The main conclusions are summarized as follows:
\begin{romannum}

\item   
The new class of convex Q-learning   algorithms is introduced based on the convex relaxation of the Bellman equation.  Convergence is established under general conditions, including a linear function approximation for the Q-function.

\item    A batch implementation appears similar to the famed DQN algorithm (one engine behind AlphaZero).  It is shown that in fact the algorithms are very different: while convex Q-learning solves a convex program that approximates the Bellman equation,  theory for DQN is no stronger than for Watkins’ algorithm with function approximation:  (a) it is shown that both seek solutions to the same fixed point equation, and (b)  the ``ODE approximations'' for the two algorithms coincide,  and little is known about the stability of this ODE.    

%
\end{romannum}
These results are obtained for deterministic nonlinear systems with total cost criterion.  
Many extensions are proposed, including kernel implementation, and extension to MDP models.

\medskip
 \noindent
\textbf{Note:}
This pre-print is written in a tutorial style so it is accessible to new-comers.  It will be a part of a handout for upcoming short courses on RL.   A more compact version suitable for journal submission is in preparation.

\end{abstract} 
\thispagestyle{empty} 
%

%
%
%

  \clearpage

\section{Introduction} 
\label{s:intro}

This paper concerns design of reinforcement learning algorithms for nonlinear, deterministic state space models.  The setting is primarily deterministic systems  in discrete time, where the main ideas are most easily described.   

Specifically,  we consider a state space model with state space $\state$,
and input (or action) space $\ustate$  (the sets $\state$ and $\ustate$ may be Euclidean space,   finite sets, or something more exotic).  
The input and state are related through the dynamical system   
\begin{equation}
x(k+1) = \Dds(x(k),u(k)) \,,\qquad k\ge 0\,, \  x(0)\in\state\,,
\label{e:dis_NLSSx_controlled_opt}
\end{equation}
where $\Dds\colon  \state\times\ustate \to\state $.   It is assumed that there is $(x^e,u^e)\in\state\times\ustate$ that achieves equilibrium:
\[
x^e = \Dds (x^e,u^e)
\]
The paper concerns infinite-horizon optimal control, whose definition requires a cost function $c\colon\state\to\Re_+$.
The cost function is  non-negative, and vanishes   at $(x^e,u^e)$.


\mindex{Value function!Optimal}

The (optimal) value function is denoted
\begin{equation}
J^\oc (x) 
= \min_{\bfmu} \sum_{k=0}^\infty  c (x(k), u(k))    \, , \quad x(0)=x\in\state\, ,
\label{e:Jstar}
\end{equation}
where the minimum is over all input sequences $\bfmu\eqdef \{u(k) : k\ge 0\}$.
The goal of optimal control is to find an optimizing input sequence, and in the process we often need to compute the value function $J^\oc$.   
We settle for an approximation  in the majority of cases.   In this paper the approximations will be based on Q-learning; it is hoped that the main ideas will be useful in other formulations of reinforcement learning.

\paragraph{Background}    The dynamic programming equation associated with \eqref{e:Jstar} is
\begin{equation}
J^\oc (x) = \min_{u} \bigl\{ c (x,u)  +   J^\oc ( \Dds(x,u) ) \bigr\}  
\label{e:BE_part1}
\end{equation}
There may be state-dependent constraints, so the minimum is over a set $\ustate(x)\subset\ustate$.  
The function of two variables within the minimum in \eqref{e:BE_part1} is the ``Q-function'' of reinforcement learning:  
  \begin{equation}
Q^\oc(x,u) \eqdef  c (x,u)  +   J^\oc ( \Dds(x,u) )    
\label{e:Q_part1}
\end{equation}
so that the Bellman equation is equivalent to
\begin{equation}
J^\oc (x) = \min_{u} Q^\oc(x,u) 
\label{e:BE_part1_gen}
\end{equation}
From  \Cref{e:Q_part1,e:BE_part1_gen}  we obtain a fixed-point equation for the Q-function:
\begin{equation}
Q^\oc(x,u)  =  c (x,u) 
  +   \uQ^\oc ( \Dds(x,u) )
\label{e:WeAreQ}
\end{equation}
with $ \uQ(x) = \min_u Q(x,u)$ for any function $Q$.  The optimal input is state feedback $u^*(k) = \fee^\oc(x^\oc(k))$, with  
\choreP{  Is this too vague?  With this definition, $\fee^\oc $ might not be measurable.   How do you suggest we reword?}
\begin{equation}
\fee^\oc (x) \in \argmin_u  Q^\oc(x,u)\,,\qquad x\in\state
\label{e:fee}
\end{equation}

Temporal difference (TD)  and Q-learning are two large families of reinforcement learning algorithms based on approximating the value function or Q-function as a means to approximate  $\fee^\oc$ \cite{sutbar18,ber19a,bertsi96a}.   Consider a parameterized family $\{ Q^\theta : \theta\in\Re^d\}$;  each a  real-valued function on $\state\times \ustate$.
For example, the vector $\theta$ might represent weights in a neural network.  Q-learning algorithms are designed to approximate $Q^\oc$ within this parameterized family.   Given the parameter estimate $\theta\in\Re^d$, the ``$Q^\theta$-greedy policy'' is obtained:
\begin{equation}
\fee^\theta(x) = \argmin_u Q^\theta(x,u)
\label{e:Qgreedy}
\end{equation} 
Ideally, we would like to find an algorithm  that finds a value of $\theta$ so that this   best approximates the optimal policy.

Most algorithms are based on the sample path interpretation of \eqref{e:WeAreQ}:
\begin{equation}
Q^\oc(x(k),u(k) )  =  c (x(k),u(k) )   +   \uQ^\oc ( x(k+1))
\label{e:e:WeAreQ2}
\end{equation}
valid for any input-state sequence $\{ u(k), x(k) : k\ge 0\}$.     Two general approaches to define $\theta^\ocp$ are each posed in terms of the \textit{temporal difference}:
\begin{equation}
\Tdiff_{k+1}(\theta) \eqdef   -  Q^\theta (x(k),u(k) )   +   c(x(k),u(k))  +   \uQ^\theta (x(k+1)) 
\label{e:TDonline}
\end{equation} 
assumed to be observed on the finite time-horizon $0\le k\le\Nsam$.
\notes{which is where TD-learning gets its name
}
 
\begin{romannum}
\item  A  \textit{gold standard} loss function   is the mean-square Bellman error associated with \eqref{e:TDonline}: 
\begin{equation}
\clEBE(\theta)   =  \frac{1}{\Nsam} \sum_{k=0}^{\Nsam-1}  \bigl[   \Tdiff_{k+1}(\theta)  \bigr]^2
\label{e:MSBE}
 \end{equation}
 and  $\theta^\ocp$ is then defined to be its global minimum.
   Computation of a global minimum is a challenge since this function is not convex, even with    $\{ Q^\theta : \theta\in\Re^d\}$ linear in $\theta$.
\choreP{ $\clEBE$ is ugly.   It was previously  $\clE^e$, but the 'e' stands for equilibrium in this paper (and the book).  Any prettier symbols?}
 
 \item
 Watkins' Q-learning algorithm, as well as the earlier TD methods of Sutton (see \cite{sut84} for the early origins), 
 can be cast as a \textit{Galerkin relaxation} of the Bellman equation:   A sequence of $d$-dimensional \textit{eligibility vectors} $\{ \elig_k \} $ is constructed,  and the goal then is to solve 
\begin{equation}
\Zero  =  \frac{1}{\Nsam} \sum_{k=0}^{\Nsam-1}\Tdiff_{k+1}(\theta) \elig_k
\label{e:EmpiricalGalerkin}
\end{equation} 
\end{romannum}

The details of Watkins' algorithm can be found in the aforementioned references, along with \cite{devbusmey20} (which follows the notation and point of view of the present paper).   We note here only that in the original algorithm of Watkins,
\begin{romannum}
\item[$\grave$]   The algorithm is defined for finite state and action MDPs (Markov Decision Processes),   
and $Q^\oc \in \{ Q^\theta : \theta\in\Re^d\}$  (the goal is to compute the Q-function exactly).

\item[$\grave$]   The approximation family is linear $Q^\theta = \theta^\transpose \psi$,  with $\psi\colon\state\times\ustate\to\Re^d$.

\item[$\grave$]      $\elig_k = \psi(x(k),u(k))$. 
 \end{romannum}    
Equation \eqref{e:EmpiricalGalerkin} is not how Q-learning algorithms are typically presented, but does at least approximate the goal in many formulations.  A limit point of Watkins' algorithm, and generalizations such as  \cite{melmeyrib08,leehe19},  solves the ``projected Bellman equation'':  
\begin{equation}
\begin{aligned}
  \barf(\theta^\ocp) &= 0 \,,\qquad 
  \barf(\theta) =
\Expect
  \bigl[      \Tdiff_{k+1}(\theta) \elig_k   \bigr]   
\end{aligned} 
  \label{e:WatkinsRelax}
\end{equation}
where the expectation is in steady-state  (one assumption is the existence of a steady-state).
The   basic extension of Watkins' algorithm is defined by the recursion 
\begin{equation}
\theta_{n+1} = \theta_n + \alpha_{n+1}   \Tdiff_{n+1} \elig_n
\label{e:WatkinsFunctionApproximation}
\end{equation}
with $\{\alpha_n \}$ the non-negative step-size sequence,   and $  \Tdiff_{n+1}$ is short-hand for $  \Tdiff_{n+1}(\theta_n) $.    
The recursion \eqref{e:WatkinsFunctionApproximation} is called Q(0)-learning for the special case   $\elig_n =  \nabla_\theta Q^{\theta} (x(n),u(n) ) \big|_{\theta =\theta_n}$, in analogy with TD(0)-learning~\cite{devbusmey20}. 
Criteria for convergence is typically cast within the theory of stochastic approximation, which is based on the ODE  (ordinary differential equation),
\begin{equation}
\ddt \odestate_t = \barf(\odestate_t)   \,,\qquad \odestate_0\in\Re^d
\label{e:WatkinsRelaxODE}
\end{equation}
 Conditions for convergence of \eqref{e:WatkinsFunctionApproximation} or its ODE approximation \eqref{e:WatkinsRelaxODE}  are very restrictive  \cite{melmeyrib08,leehe19}.


While not obvious from its description, the   DQN algorithm (a significant component of famous applications such as AlphaGo) will converge to the same projected Bellman equation, provided it is convergent---see \Cref{t:dumbDQN} below.   

This opens an obvious question:  does \eqref{e:WatkinsRelax} have a solution?  Does the solution lead to a good policy?   An answer to the second question is wide open, despite the success in applications.  The answer to the first question is, in general, \textit{no}.     The conditions imposed in  \cite{melmeyrib08} for a solution are very strong, and not easily verified in any applications;  the more recent work \cite{leehe19} offers improvements, but nothing approaching a full understanding of the algorithm.
  
The question of existence led to an entirely new approach in  \cite{maeszebhasut10}, based on 
    the  non-convex optimization problem:
\choreS{
Moreover, there are counterexamples:    I'm working on refs on this,   
\\
\rd{see final NeurIPS paper:}
}
\begin{equation}
\min_\theta J(\theta)=\min_{\theta}\half \barf(\theta)^\intercal M\barf(\theta),\qquad \mbox{with} \quad M>0
\label{e:gq-obj}
\end{equation}
with $\barf$ defined in \eqref{e:WatkinsRelax}.
Consider the continuous-time gradient descent algorithm associated with  \eqref{e:gq-obj}:
\begin{equation}
\ddt \odestate_t =  -   [\partial_\theta  \barf\, (\odestate_t)]^\transpose M\barf(\odestate_t)
\label{e:GQODE}
\end{equation} 
The GQ-learning algorithm is a stochastic approximation (SA) translation of this ODE,  using $M = \Expect[\elig_n\elig_n^\transpose]^{-1}$.   
Convergence holds under conditions, such as a coercive condition on $J$ (which   is not easily verified a-priori).  Most important:
does this algorithm lead to a good approximation of the Q-function, or the policy?  

This brings us back to (ii):  \textit{why   approximate the solution of \eqref{e:WatkinsRelax}?}
We do not have an answer.  In this paper we return to \eqref{e:MSBE}, for which we seek convex re-formulations.

  \paragraph{Contributions.}  

The apparent challenge with approximating the Q-function is that it solves a nonlinear fixed point equation \eqref{e:WeAreQ} or \eqref{e:e:WeAreQ2},  for which root finding problems for approximation may not be successful  (as counter examples show).   This challenge seems paradoxical, given the long history of convex analytic approaches to dynamic programming in both the MDP literature \cite{man60a,der70,bor02a}    and the linear optimal control literature  \cite{vanboy99}.     

\saveForBook{
Sean to revisit Daniela de Farias' work  
\cite{farroy03a,farroy06}  (nearly done)}

The starting point of this paper is to clarify this paradox,  and from this create a new family of RL algorithms designed to minimize a convex variant of the empirical mean-square error \eqref{e:MSBE}.  This step was anticipated in   \cite{mehmey09a}, for which a convex formulation of Q-learning was proposed for deterministic systems in continuous time;   in this paper we   call this \textit{H-learning},  motivated by commentary in the conclusions of   \cite{mehmey09a}.   The most important ingredient in H-learning is the creation of a  convex program suitable for RL based on an over-parameterization, in which the value function and Q-function are treated as separate variables.

A significant failing of H-learning was the complexity of implementation in its ODE form, because of implicit constraints on $\theta$:    complexity persists when using a standard Euler approximation to obtain a recursive algorithm.  In 2009, the authors believed that recursive algorithms were essential for the sake of data efficiency.
Motivation for the present work came from breakthroughs in empirical risk minimization (ERM) appearing in the decade since \cite{mehmey09a} appeared,   and the success of DQN algorithms that are based in part on ERM.

The main contributions are summarized as follows:
\begin{romannum}
 
\item   
The linear programming (LP) approach to dynamic programming has an ancient history \cite{man60a,der70,bor02a},  with applications to approximate dynamic programming beginning with de Farias' thesis \cite{farroy03a,farroy06}.   The   ``\DPLP''  (dynamic programming linear program) is introduced in \Cref{t:LPforJQ} for deterministic control systems,  with generalizations to MDPs in \Cref{s:H-MDP}.   
  \textit{It is the over-parameterization that lends itself to data driven RL algorithms for nonlinear control systems.}
The relationship with semi-definite programs for LQR is made precise in \Cref{t:LPforLQR}.

\item   
\Cref{s:Q} introduces   new Q-learning algorithms inspired by the \DPLP.     Our current favorite is Batch Convex Q-Learning \eqref{e:ConvexQalmostSA}:  it appears to offer the most flexibility in terms of computational complexity in the optimization stage of the algorithm.    A casual reader might mistake \eqref{e:ConvexQalmostSA} for the DQN algorithm.   The algorithms are very different, as made clear in  \Cref{t:dumbDQN,t:ConvexDQNconverges}:    the DQN algorithm cannot solve the minimal mean-square Bellman error optimization problem.   Rather, any limit of the algorithm must solve the   relaxation \eqref{e:WatkinsRelax}  (recall that there is little theory providing sufficient conditions for a solution to this fixed point equation).

The algorithms proposed here converge to entirely different values:

 \item    As in \cite{mehmey09a}, it is argued that there is no reason to introduce random noise for exploration for RL in deterministic control applications.    Rather, we opt for the quasi-stochastic approximation (QSA) approach of \cite{mehmey09a,shimey11,berchecoldalmehmey19b,cheberdevmey19}.     Under mild conditions, including a linear function approximation architecture,  parameter estimates from the Batch Convex Q algorithm will converge to the solution to the quadratic program  
\begin{equation}
\begin{aligned}
\min_\theta \ & 
-
    \langle \mu,J^\theta \rangle    
+
\kappaBE
 \Expect
  \bigl[    \bigl\{  \Tdiff^\circ_{k+1}(\theta)   \bigr\}^2   \bigr]   
   \\
\st  \ &  \Expect
  \bigl[    \Tdiff^\circ_{k+1}(\theta)  \elig_k(i)    \bigr]       \ge 0  \,,\qquad i\ge 1
\end{aligned} 
\label{e:MMQ}
\end{equation}
where 
 $ \Tdiff^\circ_{k+1}(\theta)  $ denotes the \textit{observed Bellman error} at time $k+1$:
\[
\Tdiff^\circ_{k+1}(\theta) \eqdef   -Q^\theta (x(k),u(k) )   +   c(x(k),u(k))  +   J^\theta (x(k+1)) 
\]
The expectations in \eqref{e:MMQ} are in steady-state. 
The remaining variables are part of the algorithm design:
 $\{ \elig_k  \}$ is a non-negative vector-valued sequence,   $\mu$ is a positive measure, and      $\kappaBE$ is non-negative. 

The quadratic program \eqref{e:MMQ}
 is a relaxation of the \DPLP, which is tight under ideal conditions (including the assumption that   $(J^\oc,Q^\oc)$ is contained in the function class).   See   \Cref{t:ConvexQ_limit_cor} for details.

\end{romannum}

\paragraph{Literature review}

This paper began as a resurrection of the conference paper  \cite{mehmey09a}, which deals with Q-learning for deterministic systems in continuous time.   A primitive version of convex Q-learning was introduced; the challenge at the time was to find ways to create a reliable  online algorithm.  This is resolved in the present paper through a combination of Galerkin relaxation techniques and ERM.
Complementary to the present research is the empirical    value iteration algorithms developed in
\cite{gupjaigly15,shajaigup19}.

The ideas in  \cite{mehmey09a} and the present paper were   inspired in part by the LP approach to dynamic programming introduced by Manne in the 60's \cite{man60a,der70,araborferghomar93,bor02a}.    A significant program on linear programming approaches to approximate dynamic programming is presented in  \cite{farroy03a,defvan04,farroy06}, but we do not see much overlap with the present work.   
There is a also an on-going research program on LP approaches to optimal control for deterministic systems  \cite{vin93,herhertak96,herlas02a,las10,kamsutmohlyg17,borgai19,gaiparshv17,gaiqui13,borgaishv19},  and  semi-definite programs (SDPs) in linear optimal control \cite{boyelgferbal94,wanboy09}. 
\saveForBook{
\cite{fingaileb08,,}   
\\
what is this?  Primal-dual Q-learning framework for LQR design}

Major success stories in Deep Q-Learning (DQN) practice are described in \cite{DQN15,mnikavsilgraantwierie13,DQN16Asynchronous},  and   recent surveys \cite{ansbarshi17,shaman20}.
We do not know if the research community is aware that these algorithms, if convergent, will converge to the projected Bellman equation \eqref{e:WatkinsRelax}, and that an ODE approximation of DQN is identical to that of \eqref{e:WatkinsFunctionApproximation}  (for which stability theory is currently very weak).   A formal definition of ``ODE approximation'' is provided at the end of \Cref{s:QSA}.

Kernel methods in RL have a significant history, with most of the algorithms designed to approximate the value function for a fixed policy,
so that the function approximation problem can be cast in a least-squares setting \cite{glyorm02b,fenliliu19}  (the latter proposes extensions   to Q-learning).

\saveForBook{
   Key background is \cite{rie05}, and explosions from Minh et al }

\choreP{Essay on your fav contributions}

\choreS{ mention of Ben Recht's work}

\saveForBook{
The LP \eqref{e:JQz}   is surely well known.   Also,
is the LQR corollary \Cref{t:LPforLQR} equivalent to the standard representation from textbooks?
}

\saveForBook{ Nice survey material in \cite{wanboy09}.  References to check out include ... see commented text}
 
%
%
%
%

\notes{LSVI etc seems like bullshit -- see commented text}

\saveForBook{
  \textbf{Empirical risk} 
Bach and Moulines have convinced the world to  use batch methods instead of stochastic approximation in some settings   (I know these papers shifted the field, but I need to research: \cite{moubac11,bacmou13}).   Message to Eric on June 29: \textit{
Many cite your 2011 paper as justification (2 years back, Giannakis told me that SA is now obsolete, and used your work as evidence!  I’m sure he was exaggerating). I was blown away at your Data Science Summer School when I learned that all of the stochastic optimization crowd had sworn off SA.  I understand now: they basically get optimal variance without second order methods.  Perhaps you can provide solid references from your early days. 
}
Peter RICHTÁRIK was key player at the summer school.
}

   \smallbreak

The remainder of the paper is organized as follows.   \Cref{s:OPTSA} begins with a review of linear programs for optimal control,  and new formulations designed for application in RL.   
Application to LQR is briefly described in \Cref{s:LQR}, which concludes with an example to show the challenges expected when using standard Q-learning algorithms even in the simplest scalar LQR model.   \Cref{s:Q} presents a menu of ``convex Q-learning" algorithms.
Their relationship with DQN (as well as significant advantages) is presented in \Cref{s:DQN}.
Various extensions and refinements are collected together in \Cref{s:junkyard}, including application to MDPs.
   The theory is illustrated with
   a single example in
   \Cref{s:ex}.
\Cref{s:conc} contains conclusions and topics for future research.
Proofs of technical results are contained in the appendices.

\section{Convex Q-Learning}
\label{s:OPTSA}

The RL algorithms introduced in \Cref{s:Q} are designed to approximate the value function $J^\oc$ within a finite-dimensional function class $\{J^\theta :\theta\in\Re^d\}$.   We obtain a convex program when this parameterization is linear.   For readers interested in discounted cost, or finite-horizon problems,   \Cref{s:var} contains hints on how the theory and algorithms can be modified to your favorite optimization criterion.

As surveyed in the introduction, a favored approach in Q-learning is to consider  for each $\theta\in\Re^d$  the associated Bellman error 
\begin{equation}
\BE^\theta(x) =   -J^\theta (x) + \min_{u}  [ c (x,u)  +   J ^\theta( \Dds(x,u) )  ]  \,,\qquad x\in\state
\label{e:BErr_theta_part1}
\end{equation}
A generalization of \eqref{e:MSBE} is to introduce  a weighting measure $\mu$  (typically a probability mass function (pmf) on $\state $),  and consider the mean-square Bellman error
\saveForBook{Measure ok in book?}
\begin{equation}
\clE(\theta) =  \int  [\BE^\theta(x) ]^2 \, \mu(dx)
\label{e:BErr_MSE_part1}
\end{equation}
A significant challenge is that  the loss function $\clE$ is not convex.   
We obtain a convex optimization problem that is suitable for application in RL by applying a common trick in optimization:  over-parameterize the search space.    Rather than approximate $J^\oc$,   we simultaneously approximate $J^\oc$ and $Q^\oc$,   where the latter is the Q-function defined in  \eqref{e:Q_part1}.

Proofs of all technical results in this section can be found in the appendices.

\subsection{Bellman Equation is a Linear Program}
\label{s:BELP}

For any function $J\colon\state\to\Re$, and any scalar $r$,  let $S_J(r)$ denote the \textit{sub-level set}:
\begin{equation}
S_J(r) =  \{ x \in\state : J(x) \le r\}
\label{e:SJ}
\end{equation}
The function $J$ is called \textit{inf-compact} if   the set $S_J(r)$ is either pre-compact, empty,  or $S_J(r) =\state$  (the three possibilities depend on the value of $r$).   In most cases we find that $S_J(r) =\state$ is impossible, so that we arrive at the  stronger
  \textit{coercive} condition:
\mindex{Coercive}
\begin{equation}
\lim_{\|x\|\to\infty} J(x) =\infty 
\label{e:coercive}
\end{equation}

\begin{proposition}
\label[proposition]{t:LPforJQ}
Suppose that the value function $J^\oc$ defined in \eqref{e:Jstar} is continuous, inf-compact, and vanishes only at $x^e$.   Then,  the pair
$(J^\oc,Q^\oc)$  solve  the following convex program in the ``variables'' $(J,Q)$:  
 \begin{subequations}
\begin{align} 
\max_{J,Q}   \ \  &   \langle \mu, J \rangle      
\label{e:ConvexBEa}
\\
\st  \ \  &  Q(x,u) \le  c (x,u)  +   J ( \Dds(x,u) )  
\label{e:ConvexBEb}
\\
&   Q(x,u) \ge J(x)  \,,\qquad\qquad  x\in\state\,,  \  u\in\ustate(x)
\label{e:ConvexBEc}
\\[.5em]
&  \text{$J$ is continuous, and $ J(x^e) =0$. }
\label{e:ConvexBEd}
\end{align} 
\label{e:ConvexBE}%
\end{subequations}%
\end{proposition}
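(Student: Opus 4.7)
The plan is to establish two things: first, that the pair $(J^\oc,Q^\oc)$ is feasible for the program \eqref{e:ConvexBE}, and second, that it is optimal, in the strong sense that any feasible $(J,Q)$ satisfies $J(x)\le J^\oc(x)$ pointwise in $x$, which immediately yields $\langle\mu,J\rangle\le\langle\mu,J^\oc\rangle$ regardless of the measure $\mu$.

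Feasibility is almost immediate from the definitions. By \Cref{e:Q_part1}, $Q^\oc(x,u) = c(x,u)+J^\oc(\Dds(x,u))$, so \Cref{e:ConvexBEb} holds with equality. By \Cref{e:BE_part1_gen}, $J^\oc(x) = \min_u Q^\oc(x,u) \le Q^\oc(x,u)$ for all admissible $u$, which is \Cref{e:ConvexBEc}. Continuity is assumed, and $J^\oc(x^e)=0$ follows because the admissible input sequence $u(k)\equiv u^e$ keeps the state at $x^e$, where $c$ vanishes by hypothesis, so the total cost from $x^e$ is zero.

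For optimality, fix any feasible pair $(J,Q)$. Combining \Cref{e:ConvexBEb,e:ConvexBEc} yields the \emph{subsolution} inequality
\[
J(x) \le c(x,u) + J(\Dds(x,u)), \qquad x\in\state,\ u\in\ustate(x).
\]
Iterating along an arbitrary admissible input-state trajectory starting from $x(0)=x$ gives, for every $N\ge 1$,
\[
J(x) \le \sum_{k=0}^{N-1} c(x(k),u(k)) + J(x(N)).
\]
For $\varepsilon>0$, choose an $\varepsilon$-optimal input sequence $\bfmu$ so that the total cost is at most $J^\oc(x)+\varepsilon$. Then the tail cost $\sum_{k=N}^{\infty} c(x(k),u(k))$ is bounded above by $J^\oc(x)+\varepsilon - \sum_{k=0}^{N-1}c(x(k),u(k))$, which vanishes as $N\to\infty$. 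Since this tail dominates $J^\oc(x(N))$, we obtain $J^\oc(x(N))\to 0$. It remains to show that this forces $J(x(N))\to 0$; once that is in hand, passing to the limit in the displayed inequality and letting $\varepsilon\downarrow 0$ gives $J(x)\le J^\oc(x)$.

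The main obstacle, and the reason for the hypotheses of continuity, inf-compactness, and vanishing only at $x^e$, is precisely this last step. The argument I would use: by inf-compactness of $J^\oc$, the sub-level sets $S_{J^\oc}(r)$ are pre-compact for every $r>0$ small enough, so the trajectory $\{x(N)\}$ is eventually confined to a compact set. Any subsequential limit $x^*$ satisfies $J^\oc(x^*)=0$ by continuity of $J^\oc$, and by hypothesis this forces $x^*=x^e$. Hence $x(N)\to x^e$, and by continuity of the feasible $J$, $J(x(N))\to J(x^e)=0$. This closes the argument and establishes $J\le J^\oc$ pointwise, which is in fact stronger than the optimality claim and shows that the over-parameterization does not lose anything: the linear-programming relaxation is tight.
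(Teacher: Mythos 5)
Your proof is correct and follows essentially the same route as the paper's: iterate the subsolution inequality obtained by combining \eqref{e:ConvexBEb} and \eqref{e:ConvexBEc}, then use inf-compactness and continuity of $J^\oc$, together with the fact that it vanishes only at $x^e$, to force $x(N)\to x^e$ and hence $J(x(N))\to J(x^e)=0$. The only cosmetic difference is that you drive the trajectory with an $\varepsilon$-optimal input sequence whereas the paper uses a stationary policy of finite cost (establishing $x(N)\to x^e$ in a preliminary lemma); both arguments yield the pointwise bound $J\le J^\oc$ for every feasible pair.
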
%

We can without loss of generality strengthen \eqref{e:ConvexBEb} to equality:   $Q(x,u)  = c (x,u)  +   J ( \Dds(x,u) )$.
Based on this substitution, the variable $Q$ is eliminated:
 \begin{subequations}
\begin{align} 
\max_J   \ \  &   \langle \mu, J \rangle      
\label{e:ConvexBEJa}
\\
\st  \ \  & c (x,u)  +   J ( \Dds(x,u) )  \ge J(x)    \,,\qquad\qquad  x\in\state\,,  \  u\in\ustate(x)
\label{e:ConvexBEJc}
\end{align} 
\label{e:ConvexBEJ}%
\end{subequations}%
This more closely resembles what you find in the MDP literature (see \cite{araborferghomar93} for a survey).   

The more complex LP \eqref{e:ConvexBE} is introduced because it is   easily adapted to RL applications.  To see why,  define for any  $(J,Q)$ the Bellman error:
\begin{equation}
\Tdiff^\circ(J,Q)_{(x,u)}  \eqdef   -  Q (x,u )   +   c (x,u)  +   J (\Dds(x,u) ) \,,  \qquad x\in\state\,\ u\in\ustate 
\label{e:TD_JQ}
\end{equation} 
Similar to the term $ \Tdiff_{n+1}$   appearing in \eqref{e:WatkinsFunctionApproximation}, we have   for any input-state sequence,
\[
\Tdiff^\circ(J,Q)_{(x(k),u(k))}  =   -  Q (x(k), u(k))   +   cx(k), u(k))  +   J ( x(k+1) )
\]
Hence we can observe the Bellman error along the sample path.   The right hand side will be called the temporal difference,  generalizing the standard terminology.

We present next an important corollary that will motivate RL algorithms to come.    Adopting the notation of \Cref{e:MSBE,e:MMQ}, denote  $\clEBE(J,Q) = \Tdiff^\circ(J,Q)^2$:  this is  a non-negative  function on $\state\times\ustate$, for any pair of functions $J,Q$.

\begin{corollary}
\label[corollary]{t:LPforJQ_cor}
Suppose that the assumptions of \Cref{t:LPforJQ} hold.    Then,  for any constants $\kappaBE>0$, $0\le \fudgeBE \le 1$,  and pmfs $\mu,\nu$,
the pair
$(J^\oc,Q^\oc)$  solve  the following quadratic program:  
 \begin{subequations}
\begin{align} 
\max_{J,Q}   \ \  &    \langle \mu, J \rangle  -  \kappaBE \langle \nu, \clEBE(J,Q)  \rangle            
   \\
  \st  \ \           & \textit{Constraints \eqref{e:ConvexBEb}--\eqref{e:ConvexBEd}  }    
  \\
  &  Q(x,u) \ge  (1-\fudgeBE) c (x,u)  +   J ( \Dds(x,u) )  
  \label{e:QuadBEa_tightenBE} 
\end{align} 
\label{e:QuadBEa}%
\end{subequations}%
\end{corollary}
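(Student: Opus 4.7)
The plan is to deduce \Cref{t:LPforJQ_cor} from \Cref{t:LPforJQ} essentially by bookkeeping, treating the quadratic program \eqref{e:QuadBEa} as a restriction of the linear program \eqref{e:ConvexBE} with a concave penalty grafted onto the objective. I would split the argument into two steps: \textbf{(i)} check that $(J^\oc, Q^\oc)$ remains feasible for the QP, and \textbf{(ii)} show that no feasible pair can improve on the QP objective at $(J^\oc, Q^\oc)$.

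For step (i), the pair $(J^\oc, Q^\oc)$ already satisfies \eqref{e:ConvexBEb}--\eqref{e:ConvexBEd} by \Cref{t:LPforJQ}. The only new constraint is the tightening \eqref{e:QuadBEa_tightenBE}. Using the identity $Q^\oc(x,u) = c(x,u) + J^\oc(\Dds(x,u))$ from \eqref{e:Q_part1} together with $c\ge 0$ and the hypothesis $\fudgeBE \ge 0$, one sees
\[
Q^\oc(x,u) \;=\; c(x,u) + J^\oc(\Dds(x,u)) \;\ge\; (1-\fudgeBE)\,c(x,u) + J^\oc(\Dds(x,u)),
\]
so \eqref{e:QuadBEa_tightenBE} holds.

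For step (ii), the same identity gives $\Tdiff^\circ(J^\oc, Q^\oc) \equiv 0$, hence $\clEBE(J^\oc, Q^\oc) \equiv 0$, so the QP objective at $(J^\oc, Q^\oc)$ collapses to $\langle \mu, J^\oc \rangle$. For any feasible $(J, Q)$, two inequalities combine: first, because the feasible set of \eqref{e:QuadBEa} is contained in that of \eqref{e:ConvexBE} (only a constraint was added), \Cref{t:LPforJQ} yields $\langle \mu, J \rangle \le \langle \mu, J^\oc \rangle$; second, with $\kappaBE > 0$ and $\nu$ a non-negative measure, the pointwise bound $\clEBE(J,Q) = [\Tdiff^\circ(J,Q)]^2 \ge 0$ implies $-\kappaBE \langle \nu, \clEBE(J,Q) \rangle \le 0$. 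Adding these shows the QP objective at $(J,Q)$ is at most $\langle \mu, J^\oc \rangle$, matching the value at $(J^\oc, Q^\oc)$, so the latter is optimal.

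There is essentially no obstacle here; the corollary is a structural consequence of \Cref{t:LPforJQ} rather than a new piece of analysis. The only point to watch is that the added constraint \eqref{e:QuadBEa_tightenBE} genuinely \emph{shrinks} (rather than enlarges) the feasible set, so that the LP bound $\langle \mu, J \rangle \le \langle \mu, J^\oc \rangle$ transfers verbatim. If one later wished to characterize \emph{uniqueness} of the QP optimizer, that would require additional structural hypotheses on the function classes and on the supports of $\mu$ and $\nu$, but no such claim is being made here.
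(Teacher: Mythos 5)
Your proof is correct and is exactly the argument the paper intends (the corollary is stated without an explicit proof in the appendices precisely because it reduces to these two observations): feasibility of $(J^\oc,Q^\oc)$ for the added constraint follows from $Q^\oc = c + J^\oc\circ\Dds$ with $c\ge 0$ and $\fudgeBE\ge 0$, and optimality follows from $\clEBE(J^\oc,Q^\oc)\equiv 0$ together with the LP bound $\langle\mu,J\rangle\le\langle\mu,J^\oc\rangle$ and nonpositivity of the penalty term. Nothing is missing.
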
%

The extra constraint   \eqref{e:QuadBEa_tightenBE}  is introduced so we arrive at something closer to \eqref{e:ConvexBEJ}.     The choice $\fudgeBE=0$ is not excluded, but we will need the extra flexibility when we seek approximate solutions.

%

\subsection{Semi-definite program for LQR}
\label{s:LQR}

Consider the LTI model:
\begin{subequations}
\begin{align}
x(k+1) &=  F x(k) + G u(k), \qquad x (0) = x_0
\label{e:dis_LSSx_controlled}%
\\
y(k) & = H x(k)  
\end{align}
\label{e:dis_LSS_controlled}%
\end{subequations}%
where $(F,G,H)$ are matrices of suitable dimension (in particular, $F$ is $n\times n$ for an $n$-dimensional state space), and assume that the cost is quadratic:
\begin{equation}
c(x,u) = \| y\|^2 + u^\transpose R u 
\label{e:quadraticCost}
\end{equation}
with $y = H x$,   and $R>0$.   We henceforth denote  $S = H^\transpose H \ge 0$.  

To analyze the LP \eqref{e:ConvexBE}  for this special case,  it is most convenient to express all three functions appearing in  \eqref{e:ConvexBEb}  in terms of the   variable $z^\transpose  = (x^\transpose,  u^\transpose)$:    
\begin{subequations}
\begin{alignat}{3}
J^\oc(x,u)  &=z^\transpose {M^J}^\oc z  
				&	Q^\oc(x,u)   =z^\transpose {M^Q}^\oc z  \qquad c(x,u) &= z^\transpose M^c z 
\\[.5em]
 {M^J}^\oc&= \begin{bmatrix}  M^\oc & \Zero \\ \Zero & \Zero\end{bmatrix} 
 &
	 M^c& =   \begin{bmatrix}  S & \Zero \\ \Zero &R \end{bmatrix} 
\\
& &{M^Q}^\oc  = M^c + 
	\begin{bmatrix}
		 F^TM^\oc F & F^TM^\oc G \\
		G^TM^\oc F &  G^TM^\oc G \\
	 \end{bmatrix}   &
	 \label{e:JQzQ}
\end{alignat} 
\label{e:JQz}%
\end{subequations}  
Justification of the formula for  ${M^Q}^\oc$ is contained in the proof of   \Cref{t:LPforLQR} that follows.

\begin{proposition}
\label[proposition]{t:LPforLQR}
Suppose that $(F,G)$  is stabilizable and  $(F,H)$  is detectable,   so that  $J^\oc $ is everywhere finite.   
Then, the value function   and   Q-function     are each quadratic:     $J^\oc (x) = x^\transpose M^\oc x$ for each $x$,   where $ M^\oc \ge 0$ is a solution to the algebraic Riccati equation,  and the quadratic Q-function is given in \eqref{e:JQz}.
The matrix $M ^\oc$ is also the solution to the following convex program:
 \begin{subequations}
\begin{align} 
M ^\oc \in 
\argmax   \ \  &    \trace(M)    &&
\label{e:ConvexLQRa}
\\
\st  \ \  &   \begin{bmatrix}  S & \Zero \\ \Zero &R \end{bmatrix} 
		    + 
			\begin{bmatrix} 
			F^TMF & F^TMG
				 \\ 
			G^TMF & G^TMG
			\end{bmatrix}  
			\ge
			  \begin{bmatrix} 
			M &  0 
				 \\ 
			0 & 0
			\end{bmatrix}  &&  
\label{e:ConvexLQRc}
\end{align} 
\label{e:ConvexLQR}%
\end{subequations} 
where  the maximum is   over symmetric matrices $M$, and
 the inequality constraint    \eqref{e:ConvexLQRc}
  is in the sense of symmetric matrices. \end{proposition}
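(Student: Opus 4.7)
The plan has three stages. First, I would invoke classical LQR theory (or simply quote the well-known fact) to obtain that under the stated stabilizability/detectability assumptions, the value function is quadratic, $J^\oc(x) = x^\transpose M^\oc x$, where $M^\oc \ge 0$ is the (stabilizing) solution of the discrete-time algebraic Riccati equation. Under detectability, $M^\oc$ is in fact positive definite, so $J^\oc$ is coercive and the hypotheses of \Cref{t:LPforJQ} are satisfied. Second, I would derive the form of $Q^\oc$ directly from its definition \eqref{e:Q_part1}: substituting $\Dds(x,u) = Fx + Gu$ and expanding $(Fx+Gu)^\transpose M^\oc (Fx+Gu)$ as a quadratic form in $z = (x^\transpose,u^\transpose)^\transpose$ produces exactly the block matrix in \eqref{e:JQzQ}.

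The third, and main, stage is to identify the SDP \eqref{e:ConvexLQR} with the LP \eqref{e:ConvexBE} restricted to quadratic ansatz. The key observation is that if we write $J(x) = x^\transpose M x$ and, as the paper notes, eliminate $Q$ by setting $Q(x,u) = c(x,u) + J(\Dds(x,u))$ (its tightened form), then the remaining constraint $Q(x,u) \ge J(x)$ becomes
\[
x^\transpose S x + u^\transpose R u + (Fx+Gu)^\transpose M (Fx+Gu) \;\ge\; x^\transpose M x \qquad \text{for all } x,u,
\]
which is precisely the LMI \eqref{e:ConvexLQRc}. The condition $J(x^e)=0$ with $x^e=0$ is automatic. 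To reduce the LP objective $\langle \mu, J\rangle$ to $\trace(M)$, I would choose $\mu$ to be any probability measure whose covariance is the identity, e.g.\ the uniform distribution on $\{\pm e_1,\ldots,\pm e_n\}$; then $\langle \mu, x^\transpose M x\rangle = \trace(M)$.

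It remains to show that among symmetric $M$ satisfying \eqref{e:ConvexLQRc}, $M^\oc$ maximizes $\trace(M)$. The plan here is a standard monotonicity argument: iterating the LMI along any input-state trajectory yields
\[
x^\transpose M x \;\le\; \sum_{k=0}^{N-1} c(x(k),u(k)) + x(N)^\transpose M x(N).
\]
Choosing the optimal (stabilizing) feedback drives $x(N)\to 0$, so passing to the limit gives $x^\transpose M x \le J^\oc(x) = x^\transpose M^\oc x$ for every $x$, i.e.\ $M \preceq M^\oc$ in the PSD order. Since trace is monotone on PSD and since $M^\oc - M \succeq 0$ with $\trace(M^\oc - M) = 0$ forces $M = M^\oc$, the maximizer is $M^\oc$ and it is unique.

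The main obstacle is the last step: the SDP maximizes over all symmetric $M$, not just PSD ones, so one cannot directly quote the monotonicity of the Riccati operator. Handling this cleanly requires the trajectory-iteration argument above, together with the fact that $x(N)^\transpose M x(N)\to 0$ under the optimal stabilizing policy for \emph{any} fixed symmetric $M$ (which holds because $x(N)\to 0$ geometrically). Everything else is routine linear algebra and direct appeal to \Cref{t:LPforJQ}.
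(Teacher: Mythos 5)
Your proposal is correct and follows essentially the same route as the paper: restrict the \DPLP\ of \Cref{t:LPforJQ} to quadratic $(J,Q)$, eliminate $Q$ via the tightened equality, observe that the remaining constraint $Q\ge J$ is exactly the LMI \eqref{e:ConvexLQRc}, and realize $\trace(M)$ as $\langle\mu,J\rangle$ for a suitable $\mu$ supported on (signed) standard basis vectors. The one substantive difference is in how optimality of $M^\oc$ is certified: the paper simply inherits it from \Cref{t:LPforJQ}, whereas you re-run the trajectory-iteration argument for arbitrary symmetric (not necessarily PSD) $M$, using that the optimal closed loop is stabilizing so $x(N)^\transpose M x(N)\to 0$. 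This buys you something real: your side remark that ``under detectability $M^\oc$ is positive definite'' is actually false in general (detectability only forces the unobservable modes to be stable, so $M^\oc$ can be singular, in which case $J^\oc$ is not inf-compact and the hypotheses of \Cref{t:LPforJQ} are not literally met), but your direct argument does not need $M^\oc>0$ --- it only needs $x(N)\to 0$ under the optimal feedback, which holds under stabilizability plus detectability. So your inlined argument is, if anything, more robust than a bare citation of \Cref{t:LPforJQ}; the only remaining nit is that the uniform measure on $\{\pm e_1,\dots,\pm e_n\}$ has covariance $I/n$ rather than $I$, which rescales the objective by a constant and is immaterial to the $\argmax$.
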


 Despite its linear programming origins,   \eqref{e:ConvexLQR} is not a linear program:    it  is an example of a semi-definite program (SDP),  for which there is a rich literature with many applications to control  \cite{vanboy99}.

\paragraph{LQR and DQN}

The class of LQR optimal control problems is a great vehicle for illustrating the potential challenge with popular RL algorithms, and how these challenges might be resolved using the preceding optimal control theory.

Recall that both DQN and  Q(0)-learning, when convergent,  solves the root finding problem \eqref{e:WatkinsFunctionApproximation}.     We proceed by identifying the vector field $\barf$ defined in   \eqref{e:WatkinsRelax} (and appearing in  \eqref{e:WatkinsFunctionApproximation}) for the LQR problem,   subject to the most natural assumptions on the function approximation architecture.

For the linear system \eqref{e:dis_LSS_controlled} with quadratic cost \eqref{e:quadraticCost}, the Q-function is a quadratic (obtained from the algebraic Riccati equation (ARE) or  the SDP  \eqref{e:ConvexLQR}).   To apply Q-learning we might formulate a linear parameterization:
\[
Q^\theta(x,u ) = x^\transpose M_F x + 2u^\transpose N x + u^\transpose M_G u
\]
in which the three matrices depend linearly on $\theta$.      The Q-learning algorithm  \eqref{e:WatkinsFunctionApproximation} and  the DQN algorithm require  the minimum of the Q-function, which is easily obtained in this case:
\[
\uQ^\theta(x) = \min_u Q^\theta(x,u ) = x^\transpose  \bigl \{ M_F  - N^\transpose M_G^{-1} N \bigr\} x
\]
Consequently,  the  function $\uQ^\theta$ is typically a highly nonlinear function of $\theta$, and is unlikely to be Lipschitz continuous.
It follows that the same is true for the vector field $\barf$ defined in   \eqref{e:WatkinsRelax}.


The challenges are clear even for the \textit{scalar} state space model,  for which we write
\[
Q^\theta(x,u ) =\theta_1  x^2 + 2 \theta_2 xu  + \theta_3 u^2\,,\qquad \uQ^\theta (x) =  \bigl \{ \theta_1  - \theta_2^2/\theta_3 \bigr\} x^2
\]
The vector field    \eqref{e:WatkinsRelax}    becomes, 
\begin{gather*}
\barf(\theta) = -  \Sigma_\elig \theta + b_c +   \bigl \{ \theta_1  - \theta_2^2/\theta_3 \bigr\}b_x 
\\
 b_c = 
\Expect_\varpi[ c(x(k),u(k))  \elig_k  ]
  \,,\qquad
 b_x =  \Expect_\varpi[  x(k+1)^2  \elig_k  ] 
  \,,\qquad
 \Sigma_\elig = \Expect_\varpi[    \elig_k {\elig_k}^\transpose  ] 
\end{gather*}
where the expectations are in ``steady-state'' (this is made precise in \eqref{e:QSAergodic} below).   The vector field is far from Lipschitz continuous, which rules out stability analysis through stochastic approximation techniques without projection onto a compact convex region that excludes  $\theta_3=0$.    

Projection is not a problem for this one-dimensional special case.  
It is not known how to stabilize these Q-learning algorithms or their ODE approximation \eqref{e:WatkinsRelaxODE}
  if the state space has dimension greater than one.  We also do not know if the roots of $\barf(\theta)$ provide good approximations of the ARE, especially   if $Q^\oc$ does not lie in the function approximation class.

%

\saveForBook{See commented text}

%
%

\subsection{Exploration and Quasi-Stochastic Approximation}
\label{s:QSA}

The success of an RL algorithm based on temporal difference methods depends on the choice of input $\bfmu$ during training.   The purpose of this section is to make this precise, and present our main assumption on the input designed for generating data to train the algorithm (also known as \textit{exploration}).

\notes{Said!
\\
To construct an RL algorithm to approximate any of the convex programs surveyed in \Cref{s:BELP}, 
\saveForBook{chapter in book!}
  we need to construct an input sequence for exploration.   
  }
  
Throughout the remainder of the paper it is assumed that the input used for training is state-feedback with perturbation, of the form
\begin{equation}
u(k) = \fee(x(k),  \qsaprobe(k) ) 
\label{e:randomizedStateFB}
\end{equation}  
where the exploration signal $\bfqsaprobe$ is a bounded sequence evolving on  $\Re^p$ for some $p\ge 1$.
We adopt the quasi-stochastic approximation (QSA) setting of \cite{mehmey09a,shimey11,berchecoldalmehmey19b,cheberdevmey19}.  For example, $\qsaprobe(k)$ may be a mixture of sinusoids of irrational frequencies.   It is argued in \cite{shimey11,berchecoldalmehmey19b} that this can lead to substantially faster convergence in RL algorithms, as compared to the use of an i.i.d.\ signal.   

It will be convenient to assume that the exploration is Markovian, which for a deterministic sequence means it can be expressed
\begin{equation}
\qsaprobe(k+1) = \qsaDyn(\qsaprobe(k) )
\label{e:probeMarkov}
\end{equation}
It is assumed that   $\qsaDyn \colon\Re^p\to\Re^p$ is continuous.
Subject to the policy \eqref{e:randomizedStateFB}, it follows that the triple $\Phi(k) = (x(k),u(k),\qsaprobe(k))^\transpose$ is also Markovian,  with state space $\zstate = \state\times\ustate\times \Re^p$. 
The continuity assumption imposed below ensures that $\bfPhi$ has the Feller property,  and boundedness of $\bfPhi$ from just one initial condition implies the existence of an invariant probability measure that defines the ``steady state'' behavior \cite[Theorem~12.0.1]{MT}.

For any continuous function $g\colon\zstate\to\Re$ and $\Nsam\ge 1$,  denote  
\[
\barg_\Nsam = \frac{1}{\Nsam}  \sum_{k=1}^\Nsam  g(\Phi(k)) 
\]
This is a deterministic function of the initial condition $\Phi(0)$. 
For any $L>0$ denote 
\[
\clG_L = \{ g :   \|  g(z') - g(z)  \| \le L \| z-z'\|,\   \text{for all }\ z,z'\in\zstate \}
\]
The following is assumed throughout this section.  
\begin{quote}
\textbf{(A$\bfqsaprobe$)} \
The state and action spaces $\state$ and $\ustate$ are Polish spaces;
 $\Dds$ defined in \eqref{e:dis_NLSSx_controlled_opt},  $\fee$ defined in \eqref{e:randomizedStateFB},   and $\qsaDyn$ in \eqref{e:probeMarkov}
are each continuous on their domains,  and the larger state process $\bfPhi$ is  bounded and   ergodic in the following sense:  
There is a unique probability measure $\varpi$, with compact support, such that  for any continuous function $g\colon\zstate\to\Re$, the following ergodic average exists for each initial condition $\Phi(0)$
\begin{equation}
\Expect_\varpi[ g(\Phi)] \eqdef \lim_{\Nsam\to\infty} \barg_\Nsam
\label{e:QSAergodic}
\end{equation} 
Moreover, the limit is uniform on $\clG_L$,  for each  $L<\infty$,
\[
\lim_{\Nsam\to\infty}   \sup_{g\in\clG_L} | \barg_N - \Expect_\varpi[ g(\Phi)]  | = 0
\]
\end{quote}
For analysis of Q-learning with function approximation, the vector field   $\barf$ introduced in   \eqref{e:WatkinsRelax}  is defined similarly:
 \[
	  \barf(\theta) = \lim_{N\to\infty}\frac{1}{N} \sum_{k=1}^{N}     \Bigl[ -  Q^\theta (x(k),u(k) )   +   c(x(k),u(k))  +   \uQ^\theta (x(k+1)) \Bigr] \elig_k  
\]

Assumption~(A$\bfqsaprobe$) is surely \textit{far stronger than required}.   If $\bfPhi$ is a Feller Markov chain (albeit deterministic), it is possible to consider sub-sequential limits defined by one of many possible invariant measures.  Uniqueness of $\varpi$ is assumed so we can simplify the description of   limits of the algorithm.    The uniformity assumption  is used in   Appendix~\ref{app:CQL}
 to get a simple proof of ODE approximations,  starting with a proof that the algorithm is stable in the sense that the iterates are bounded.   In Borkar's second edition \cite{bor20a} he argues that such strong assumptions are not required to obtain algorithm stability, or ODE approximations.      \notes{Firm this up some day}

The Kronecker–Weyl Equidistribution Theorem provides ample examples of signals satisfying the   ergodic limit \eqref{e:QSAergodic}  when $g$ depends only on the exploration signal   \cite{bec17,kuinie12}.   Once this is verified, then the full Assumption~(A$\bfqsaprobe$) will hold is $\bfPhi$ is an e-chain (an equicontinuity assumption introduced by Jamison) \cite{MT}.

\saveForBook{See commented text}

The choice of ``exploration policy'' \eqref{e:randomizedStateFB} is imposed mainly to simplify analysis.  We might speed convergence significantly with an ``epsilon-greedy policy'':
\[
u(k) = \fee^{\theta_k}(x(k),  \qsaprobe(k) ) 
\]
in which the right hand side is a perturbation of the exact $Q^\theta$-greedy policy \eqref{e:Qgreedy}, using current estimate $\theta_k$.
There is a growing literature on much better exploration schemes, motivated by techniques in the bandits literature \cite{osbroydanzhe17,osbvanwen16}.    The marriage of these techniques with the algorithms introduced in this paper is a subject for future research.

\paragraph{ODE approximations}

The technical results that follow require that we make precise what we mean by  \textit{ODE approximation} for a recursive algorithm.    Consider a recursion of the form  
\begin{equation}
\theta_{n+1}  = \theta_n  +    \alpha_{n+1}   f_{n+1}(\theta_n)\,,\qquad n\ge 0
\label{e:BCQLSAa_tutorial}
\end{equation}
in which  $\{f_n\}$ is a sequence of functions that admits an ergodic limit:
\[
\barf(\theta) \eqdef \lim_{N\to\infty}\frac{1}{N} \sum_{k=1}^N f_k(\theta)  \,,\qquad \theta\in\Re^d
\]
The associated ODE is defined using this vector field:
\begin{equation}
\ddt \odestate_t= \barf(\odestate_t) 
\label{e:ODEideal}
\end{equation}

An ODE approximation is defined by mimicking the usual Euler construction:  the time-scale for the ODE is defined by the non-decreasing time points $t_0=0$ and $t_n = \sum_0^n \alpha_k$ for $n\ge 1$.    Define a continuous time process $\bfODEstate$ by  $\ODEstate_{t_n}= \theta_n$ for each $n$,  and extend to all $t$ through piecewise linear interpolation.    The next step is to fix a time horizon for analysis of length $\Hor>0$,  where the choice of $\Hor$ is determined based on properties of the ODE.       Denote $\Hor_0=0$, and
\begin{equation}
\Hor_{n+1} = \min\{ t_n :   t_n - \Hor_n \ge \Hor \}  \,,\qquad n\ge 0
\label{e:ODE_TimeInterval}
\end{equation}
Let $\{ \odestate_t^n : t\ge \Hor_n\}$ denote the solution to the ODE  \eqref{e:ODEideal} with initial condition  
$ \odestate^n_{\Hor_n}  =  \theta_{k(n)}$,   with index defined so that $t_{k(n)} = \Hor_n$.
 We then say that the algorithm \eqref{e:BCQLSAa_tutorial} admits an \textit{ODE approximation} if for each initial $\theta_0$,
\begin{equation}
 \lim_{n\to\infty} \sup_{\Hor_n\le t\le \Hor_{n+1}} \| \ODEstate_t  - \odestate_t^n \|  = 0
\label{e:ODE_Approximation_Defined}
\end{equation}

\subsection{Convex Q-learning} 
\label{s:Q}

The  RL algorithms introduced in this paper are all motivated by the ``\DPLP'' \eqref{e:ConvexBE}.   We search for an approximate solution among a finite-dimensional family 
 $\{ J^\theta\,,\ Q^\theta : \theta\in\Re^d\}$.  The value $\theta_i$ might represent the $i$th weight in a neural network function approximation architecture,  but to justify the adjective \textit{convex} we require a linearly parameterized family:
\begin{equation}
 J^\theta(x) = \theta^\transpose \psi^J (x)\,,\qquad 
 Q^\theta(x,u) = \theta^\transpose \psi (x,u) 
\label{e:linParFamily}
\end{equation}
The function class is normalized with $ J^\theta(x^e) =0$ for each $\theta$.  For the  linear approximation architecture this requires $ \psi^J_i (x^e) =0$ for each $1\le i\le d$; for a neural network architecture, this normalization is imposed through definition of the output of the network.  
Convex Q-learning based on a \textit{reproducing kernel Hilbert space} (RKHS) are contained in \Cref{s:kernel}.

Recall the MSE loss \eqref{e:MSBE} presents challenges because it is not convex for linear function approximation.    The quadratic program \eqref{e:QuadBEa} obtained from the 
 \DPLP\ motivates the variation of \eqref{e:MSBE}:
\begin{equation}
\clEBE(\theta)   =  \frac{1}{\Nsam} \sum_{k=0}^{\Nsam-1}  \bigl[    \Tdiff^\circ_{k+1}(\theta)   \bigr]^2
\label{e:MSBEconvex}
 \end{equation}
 with temporal difference defined by a modification of \eqref{e:TDonline}:
\begin{equation} 
\Tdiff^\circ_{k+1}(\theta) \eqdef   -Q^\theta (x(k),u(k) )   +   c(x(k),u(k))  +   J^\theta (x(k+1)) 
\label{e:TDonline2}
\end{equation} 
The algorithms are designed so that $J^\theta$ approximates $\uQ^\theta$, and hence $\Tdiff^\circ_{k+1}(\theta)  \approx \Tdiff_{k+1}(\theta) $
(recall from below   \eqref{e:WeAreQ}, $ \uQ(x) = \min_u Q(x,u)$ for any function $Q$).

\smallbreak
 
 The first of several versions of ``CQL'' involves a Galerkin relaxation of the constraints in the \DPLP\ \eqref{e:ConvexBE}. 
\choreS{Struggling with acronyms}  
   This requires specification of two   vector valued sequences $\{\elig_k,\elig_k^+\}$  based on the data, and denote
\begin{subequations}
\begin{align}
\zBE(\theta) & =  \frac{1}{\Nsam} \sum_{k=0}^{\Nsam-1}   \Tdiff^\circ_{k+1}(\theta)   \elig_k  
\label{e:ze}
\\
z^+(\theta) & =  \frac{1}{\Nsam} \sum_{k=0}^{\Nsam-1}  \bigl[  J^\theta (x(k))  -  Q^\theta (x(k),u(k) )    \bigr] \elig_k ^+
\label{e:z+a}
\end{align}
\end{subequations}%
with temporal difference sequence $\{  \Tdiff^\circ_{k+1}(\theta) \}$ defined in  \eqref{e:TDonline2}.
It is assumed that $\elig_k$ takes values in $\Re^d$, and that
 the entries of the vector $ \elig_k ^+$ are non-negative for each $k$. 
 \notes{need to remove equality constraints earlier}

%
 
 \begin{programcode}{LP Convex Q-Learning}%
 \begin{subequations}
\begin{align} 
\theta^\ocp = \argmax_\theta
    \ \  &   \langle \mu,J^\theta \rangle      
\\
\st  \ \  &  \zBE(\theta) = \Zero &&
\label{e:LPQ2=Galerkin}
\\
&    z^+(\theta) \le \Zero 
\label{e:LPQ2+Galerkin}
\end{align}    
\label{e:LPQ2G}
\end{subequations}%
\end{programcode}

This algorithm is introduced mainly because it is the most obvious translation of the general \DPLP~\eqref{e:ConvexBE}.  A preferred algorithm described next is motivated by the quadratic program \eqref{e:QuadBEa}, which directly penalizes Bellman error.  
The objective function is modified to include the empirical mean-square error \eqref{e:MSBEconvex} and a second loss function:
\begin{subequations}
\begin{align}
&
\clE^+(\theta)   =  \frac{1}{\Nsam} \sum_{k=0}^{\Nsam-1}  \bigl[   \{  J^\theta (x(k))  -  Q^\theta (x(k),u(k) )  \}_+   \bigr]^2
\label{e:L+}
\\
\hspace{-1em}
\textit{or} \quad &\clE^+(\theta)   =  \frac{1}{\Nsam} \sum_{k=0}^{\Nsam-1}  \bigl[   \{  J^\theta (x(k))  -  \uQ^\theta (x(k) )  \}_+   \bigr]^2
\label{e:L++}
\end{align}%
\label{e:L+++}%
\end{subequations}%
where $\{z\}_+ = \max(z,0)$.        The second option \eqref{e:L++}  more strongly penalizes deviation from the constraint $ \uQ^\theta \ge J^\theta $.   The choice of definition \eqref{e:L+} or  \eqref{e:L++} will depend on the relative complexity, which is application-specific.
\saveForBook{Lasso concepts would suggest no square, but we don't really care about meeting the constraints exactly}

\begin{programcode}{Convex Q-Learning}%
For positive scalars   $\kappaBE$ and $\kappa^+$, and a tolerance $\Tol \ge 0$,
\begin{subequations}
\begin{align} 
\theta^\ocp = \argmin_\theta &
   \bigl\{   -   \langle \mu,J^\theta \rangle      +   \kappaBE  \clEBE(\theta) +      \kappa^+  \clE^+(\theta)  \bigr\}     &&
\label{e:ConvexBEa_relax}
\\
 \qquad \st \ \       &   \zBE(\theta) = \Zero &&
\label{e:ConvexBEb_relax}
\\
&    z^+(\theta) \le \Tol
\label{e:ConvexBEc_relax}
\end{align} 
\label{e:ConvexBE_relax}%
\end{subequations}%
\end{programcode}

  To understand why  the optimization problem  \eqref{e:LPQ2G}  or \eqref{e:ConvexBE_relax}  may present challenges, consider their implementation based on a kernel.   Either of these optimization problems is a convex program.  However, due to the Representer  Theorem \cite{bis06},   
the dimension of $\theta$  is equal to the number of observations $\Nsam$.   Even in simple examples, the value of $\Nsam$ for a reliable estimate may be larger than one million.   
\notes{\url{https://en.wikipedia.org/wiki/Kernel_methods_for_vector_output}}
 
The following batch RL algorithm is designed to reduce complexity,  and there are many other potential benefits \cite{langabrie12}.  The time-horizon $\Nsam$ is broken into $B$ batches of more reasonable size,  defined by the sequence of intermediate times $T_0 =0<T_1<T_2 <\cdots <T_{B-1} <T_B=N$.   Also required are a sequence of \textit{regularizers}:    $\regBCQ_n (J,Q,\theta )$ is a convex functional of $J,Q,\theta$, that may depend on $\theta_n$.
 Examples are provided below.

\begin{programcode}{Batch Convex Q-Learning}%
With $\theta_0\in\Re^d$ given, along with     a sequence of positive scalars $\{ \kappaBE_n, \kappa_n^+\}$,  define recursively, 
\begin{subequations}
\begin{align} 
\hspace{-.75em}
\theta_{n+1} &= \argmin_\theta \Bigl \{  -   \langle \mu,J^\theta \rangle      + \kappaBE_n  \clEBE_n(\theta)  +  \kappa^+_n \clE_n^+(\theta) 
		+   \regBCQ_n (J^\theta,Q^\theta ,\theta)  \Bigr\} 
\label{e:ConvexQalmostSAgen}%
\end{align} 
where for $0\le n\le B-1$,
\begin{align}   
\clEBE_n(\theta)   
&=   \frac{1}{r_n} \sum_{k=T_n}^{T_{n+1}-1}  \bigl[    -  Q^\theta (x(k),u(k) )   +   c(x(k),u(k))  +   J^{\theta} (x(k+1))  \bigr]^2
\label{e:ConvexQalmostSAloss}%
   \\[.5em]
&
	\clE_n^+(\theta)         =  \frac{1}{r_n} \sum_{k=T_n}^{T_{n+1}-1}  \bigl[     \{  J^\theta (x(k))  -  Q^\theta (x(k),u(k) )  \}_+  \bigr]^2     
  \label{e:ConvexQalmostSAza}%
\\
\textit{or}\quad&   
	\clE_n^+(\theta)        =  \frac{1}{r_n} \sum_{k=T_n}^{T_{n+1}-1}  \bigl[     \{  J^\theta (x(k))  -  \uQ^{\theta} (x(k) )  \}_+      \bigr]^2
\label{e:ConvexQalmostSAzb}%
\end{align} 
\label{e:ConvexQalmostSAz}%
\end{subequations}%
with $r_n = 1/(T_{n+1} -T_n )$.   
\\
Output of the algorithm: $\theta_B$, to define the final approximation $Q^{\theta_B}$.
\\ 
\end{programcode}

The constraints (\ref{e:ConvexBEb_relax}, \ref{e:ConvexBEc_relax}) are relaxed in BCQL only to streamline the discussion that follows.

\notes{
\rd{Acronyms:   LPQL, CQL,  BCQL}
Trouble,  BCQ is in ``Off-Policy Deep Reinforcement Learning without Exploration'' cite{fujmegpre19}.    
}

How to choose a regularizer?   It is expected that design of $\regBCQ_n$ will be inspired by proximal algorithms, so that it will include a term of the form 
$\| \theta -\theta_n \|^2$  (most likely a weighted norm---see discussion in \Cref{s:GainBCQL}).    With a simple scaled norm, the recursion becomes
\begin{equation}
\theta_{n+1}  = \argmin_\theta \Bigl \{  -   \langle \mu,J^\theta \rangle      +  \kappaBE_n \clEBE_n(\theta)  +   \kappa^+_n \clE_n^+(\theta) + \frac{1}{\alpha_{n+1}} \half     \| \theta -\theta_n \|^2\Bigr\}
\label{e:ConvexQalmostSA} 
\end{equation}
where $\{\alpha_n\}$ plays a role similar to the step-size in stochastic approximation.   

 Convergence in this special case is established in the the following result, based on the   steady-state expectations (recall \eqref{e:QSAergodic}):  
 \begin{subequations}
\begin{align} 
\barclEBE(\theta)   
&= \Expect_\varpi \bigl[  \{ -  Q^\theta (x(k),u(k) )   +   c(x(k),u(k))  +   J^{\theta} (x(k+1))  \bigr\}^2  \bigr]
 \label{e:ConvexQ_lossBE}%
\\
\bar \clE^+(\theta) &=
 \Expect_\varpi[ \{  J^\theta (x(k))  -  Q^\theta (x(k),u(k) )  \}_+^2     ] 
 \label{e:ConvexQ_loss+}%
\end{align}%
\label{e:ConvexQ_losses}%
\end{subequations}

\begin{proposition}
\label[proposition]{t:ConvexDQNconverges}  
Consider the BCQL algorithm \eqref{e:ConvexQalmostSA} subject to the following assumptions:
\begin{romannum}
\item   The   parameterization $(J^\theta, Q^\theta)$ is linear,    and $ \barclEBE   $ is strongly convex. 

\item The non-negative step-size sequence is of the form $\alpha_n = \alpha_1/n$, with $\alpha_1>0$.    
\item  The parameters reach steady-state limits:  
\[
r \eqdef \lim_{n\to\infty} r_n
\,,\quad  \kappaBE \eqdef \lim_{n\to\infty} \kappaBE_n 
\,,\quad  \kappa^+ \eqdef \lim_{n\to\infty} \kappa^+_n 
\]

\end{romannum}
Then, the algorithm is consistent:   $\theta_n\to\theta^\ocp$ as $n\to\infty$, where the limit is the unique optimizer:
\begin{equation}
\theta^\ocp  = \argmin_\theta \Bigl \{  -   \langle \mu,J^\theta \rangle      +\kappaBE \barclEBE(\theta)  +   \kappa^+ \barclE^+(\theta)  \Bigr\}
\label{e:ConvexQalmostSAsoln} 
\end{equation}
\end{proposition}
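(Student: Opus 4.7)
The plan is to recast the recursion \eqref{e:ConvexQalmostSA} as a proximal stochastic-approximation scheme, identify its ODE limit as the gradient flow of the strongly convex objective in \eqref{e:ConvexQalmostSAsoln}, and conclude convergence to $\theta^\ocp$ via the QSA machinery of \Cref{s:QSA}. First, under the linear parameterization \eqref{e:linParFamily}, the map $\theta\mapsto \langle\mu,J^\theta\rangle$ is linear, and $\clEBE_n,\clE_n^+$ are convex quadratics (the $\{\cdot\}_+^2$ term is continuously differentiable), so \eqref{e:ConvexQalmostSA} is a strongly convex program. Its first-order optimality condition at $\theta_{n+1}$ gives the implicit update
\begin{equation*}
\theta_{n+1}=\theta_n+\alpha_{n+1}\bigl[\nabla\langle\mu,J^{\theta_{n+1}}\rangle-\kappaBE_n\nabla\clEBE_n(\theta_{n+1})-\kappa^+_n\nabla\clE^+_n(\theta_{n+1})\bigr].
\end{equation*}
This is a proximal-point / implicit-Euler variant of the SA recursion \eqref{e:BCQLSAa_tutorial} with step $\alpha_{n+1}=\alpha_1/(n+1)$.

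Next, I would identify the candidate mean vector field. Writing the bracketed term as $-\nabla L_n(\theta_{n+1})$ with $L_n(\theta)=-\langle\mu,J^\theta\rangle+\kappaBE_n\clEBE_n(\theta)+\kappa^+_n\clE^+_n(\theta)$, Assumption~(A$\bfqsaprobe$) together with hypothesis~(iii) implies that the empirical averages $\clEBE_n(\theta)$ and $\clE_n^+(\theta)$ converge to $\barclEBE(\theta)$ and $\barclE^+(\theta)$ as $n\to\infty$, and that this convergence is uniform on the Lipschitz class $\clG_L$ for each $L$. Since each $\nabla L_n$ is affine in $\theta$ with coefficients that are ergodic averages of bounded continuous functions of $\Phi(k)$, the limit vector field is
\begin{equation*}
\barf(\theta)=\nabla\langle\mu,J^\theta\rangle-\kappaBE\nabla\barclEBE(\theta)-\kappa^+\nabla\barclE^+(\theta),
\end{equation*}
and the associated ODE $\ddt\odestate_t=\barf(\odestate_t)$ is the gradient flow of the strongly convex function $\bar L(\theta)=-\langle\mu,J^\theta\rangle+\kappaBE\barclEBE(\theta)+\kappa^+\barclE^+(\theta)$. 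Strong convexity, inherited from strong convexity of $\barclEBE$ (hypothesis~(i)) and convexity of the other two terms, guarantees a unique minimizer $\theta^\ocp$, which is the unique globally asymptotically stable equilibrium of the ODE.

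The remaining work is to invoke the ODE approximation criterion \eqref{e:ODE_Approximation_Defined}. Two ingredients are needed: (a) a-priori boundedness of $\{\theta_n\}$, and (b) the standard verification that the interpolated trajectory $\bfODEstate$ tracks $\bar\odestate$ on compact time windows $[\Hor_n,\Hor_{n+1}]$. For (a), the proximal form directly yields the descent inequality
\begin{equation*}
L_n(\theta_{n+1})+\tfrac{1}{2\alpha_{n+1}}\|\theta_{n+1}-\theta_n\|^2\le L_n(\theta_n),
\end{equation*}
which combined with strong convexity of $\bar L$ (so $L_n$ is eventually coercive uniformly in $n$) confines the iterates to a compact sublevel set. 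For (b), the uniform ergodic limit in Assumption~(A$\bfqsaprobe$) applied to the finite-dimensional family of Lipschitz functions $\theta\mapsto \nabla L_n(\theta)$ on this compact set, together with the vanishing step size $\alpha_n=\alpha_1/n$, yields \eqref{e:ODE_Approximation_Defined} by a standard Grönwall argument comparing the piecewise-linear interpolation to the ODE solution on each window of length $\Hor$. Since $\theta^\ocp$ is globally asymptotically stable, the ODE approximation forces $\theta_n\to\theta^\ocp$.

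The main obstacle is step (b): dealing cleanly with the implicit (proximal) nature of the update and with the time-varying coefficients $\kappaBE_n,\kappa^+_n,r_n$. The usual SA ODE arguments are phrased for explicit updates with a fixed mean field, so I would either (i) rewrite the implicit step as an explicit one with a controlled perturbation $\theta_{n+1}-\theta_n=O(\alpha_{n+1})$ (valid on bounded iterates since $\nabla L_n$ is Lipschitz) and absorb the discrepancy into the martingale/asymptotic-noise term, or (ii) invoke a proximal QSA theorem of the type developed in the references \cite{berchecoldalmehmey19b,cheberdevmey19}. The non-smooth hinge in $\clE^+_n$ is handled because $\{\cdot\}_+^2$ is $C^1$ with Lipschitz gradient, so no subdifferential calculus is required beyond what standard SA theory supplies.
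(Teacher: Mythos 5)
Your overall architecture is the same as the paper's: rewrite the proximal update via its first-order optimality condition as an explicit SA recursion with an $O(\alpha_{n+1})$ perturbation (this is exactly \Cref{t:BCQLSA}), identify the mean vector field as the negative gradient of the limiting objective $\bar L(\theta)=-\langle \mu,J^\theta\rangle+\kappaBE\barclEBE(\theta)+\kappa^+\barclE^+(\theta)$, and conclude via the ODE approximation together with global asymptotic stability of the unique minimizer. Your step (b) and the Gr\"onwall tracking argument are essentially what the paper does (via \Cref{t:LLNuni,t:e-chain_r} and the Borkar ODE method).

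The genuine gap is in step (a), the a-priori boundedness of $\{\theta_n\}$ — which the paper explicitly identifies as ``the hard part.'' Your descent inequality $L_n(\theta_{n+1})+\tfrac{1}{2\alpha_{n+1}}\|\theta_{n+1}-\theta_n\|^2\le L_n(\theta_n)$ is correct for each fixed $n$, but it cannot be telescoped: the function changes from $L_n$ to $L_{n+1}$ between steps, and the batch-to-batch fluctuation $L_{n+1}(\theta)-L_n(\theta)$ is $O(1)$ (each $L_n$ is an average over a single finite batch, not a running average), so no fixed Lyapunov function decreases along the iterates. Worse, an individual $L_n$ need not be coercive at all: the term $-\langle\mu,J^\theta\rangle$ is linear and unbounded below, and the empirical quadratic $\clEBE_n$ computed from one batch can be degenerate in the relevant direction even when the steady-state $\barclEBE$ is strongly convex, so ``$L_n$ is eventually coercive uniformly in $n$'' does not follow from hypothesis (i) and the claimed confinement to a compact sublevel set is unavailable. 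Trying to repair this with a law-of-large-numbers bound on the fluctuations runs into circularity, since the uniform ergodic limit in (A$\bfqsaprobe$) is only applied on compact parameter sets. The paper avoids this by the Borkar--Meyn scaled-ODE technique (\Cref{t:batchBdd}): rescale by $s_n=\max(1,\|\ODEstate_{\Hor_n}\|)$ and observe that the scaled vector field converges, as $s_n\to\infty$, to $-\half\nabla\bar\clE^0$, where $\bar\clE^0$ is the purely quadratic part of the limit loss with the cost and linear terms stripped out; strong convexity of $\bar\clE^0$ with minimum at the origin yields a contraction of $\|\ODEstate_{\Hor_n}\|$ over windows of length $\Hor$, hence boundedness. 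You would need to substitute an argument of this type (or cite a Borkar--Meyn stability theorem directly) for your step (a) to close the proof.
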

The proof, contained in Appendix~\ref{app:CQL},  is based on recent results from SA theory   \cite{bor20a}.
Strong convexity of $ \barclEBE   $ is obtained by design, which is not difficult since it is a quadratic function of $\theta$:
\[
 \barclEBE  (\theta) =    \theta^\transpose   M \theta + 2 \theta^\transpose b  + k
\]
with $b\in\Re^d$,  $k\ge 0$, and
\[
M =
\Expect_\varpi\bigl[   \Upupsilon_{k+1} \Upupsilon_{k+1}^\transpose   \bigr]  \,,\qquad  \textit{with}\quad  \Upupsilon_{k+1}= \psi(x(k),u(k) )  -  \psi^J(x(k+1))
\]

Justification of \eqref{e:MMQ} requires that we consider a special case of either CQL or BCQL.  
Consider the following version of BCQL in which we  apply a special parameterization:
\begin{equation}
Q^\theta (x,u) = \theta^\transpose \psi(x,u) = J^\theta(x) + A^\theta(x,u) \,, 
\label{e:AdvantagePar}
\end{equation}
 with $\theta$ constrained to a convex set $\Theta$, chosen so that $A^\theta(x,u)\ge 0$ for all $\theta\in\Theta$, $x\in\state$, $u\in\ustate$ (further discussion on this approximation architecture is contained in \Cref{s:advantage}).
We maintain the definition of $ \clEBE_n$   from \eqref{e:ConvexQalmostSAloss}.   
We also bring back \textit{inequality} constraints on the temporal difference, consistent with the \DPLP\ constraint \eqref{e:ConvexBEb}.    The batch version of  \eqref{e:ze}   is denoted
\begin{equation}
\zBE_n(\theta)   =  \frac{1}{r_n} \sum_{k=T_n}^{T_{n+1}-1}    \Tdiff^\circ_{k+1}(\theta)   \elig_k  
\label{e:zeBatch}
\end{equation}
We require   $ \elig_k  (i) \ge 0$ for all $k,i$ to  ensure that the inequality constraint  $\zBE_n(\theta)   \ge 0$ is consistent with \eqref{e:ConvexBEb}

With $\theta_0\in\Re^d$ given, along with a  positive scalar $ \kappaBE$, consider the primal dual variant of BCQL (pd-BCQL):
 \begin{subequations}
\begin{align} 
\theta_{n+1} &= \argmin_{\theta\in\Theta} \Bigl \{  -   \langle \mu,J^\theta \rangle      + \kappaBE  \clEBE_n(\theta)   
		-  \lambda_n^\transpose  \zBE_n(\theta)  
		+  \frac{1}{\alpha_{n+1}} \half     \| \theta -\theta_n \|^2 \Bigr\}
\label{e:ConvexQalmostSAcor_theta}
\\
\lambda_{n+1}  & = \bigl[  \lambda_n -   \alpha_{n+1}  \zBE_n(\theta)     \bigr]_+
\label{e:ConvexQalmostSAcor_l}
\end{align} 
\label{e:ConvexQalmostSAcor}%
\end{subequations}%
where the subscript ``$+$'' in the second recursion is a component-wise projection:  for each $i$,  the component $\lambda_{n+1} (i)$ is constrained to an interval $[0, \lambda^{\text{max} } (i)]$.

\notes{
Slater's condition:   $  \barzBE_n(\theta)  > 0$ for some $\theta$.   Discussion p. 225 Luenberger, Example 1. 
}

\begin{corollary}
\label[corollary]{t:ConvexQ_limit_cor}
Suppose that   assumptions (i) and (ii) of \Cref{t:ConvexDQNconverges}  hold.  In addition, assume that
$A^\theta(x,u) = Q^\theta(x,u) - J^\theta(x) $ is non-negative valued for $\theta\in\Theta$,   where $\Theta$ is a   polyhedral cone with non-empty interior.
Then,  
 the sequence $\{\theta_n,\, \lambda_n \}$ obtained from the pd-BCQL algorithm \eqref{e:ConvexQalmostSAcor} is convergent to a pair $(\theta^\ocp,\lambda^\ocp)$, 
 and the following hold:
\begin{romannum}
\item 
 The limit $\theta^\ocp$ is the solution of the convex program \eqref{e:MMQ},   and $\lambda^\ocp$ is the Lagrange multiplier for the linear inequality constraint in \eqref{e:MMQ}.

\item
Consider the special case:   the state space and action space are finite,  $\mu$ has full support,  and the dimension of $\elig_k$ is equal to $d_\elig = |\state|\times|\ustate|$, with
\[
\elig_k(i) = \ind \{  (x(k),u(k)) = (x^i,u^i) \}  
\]
where $\{  (x^i,u^i) \}$ is an enumeration of all state action pairs.   Suppose moreover that $(J^\oc,Q^\oc)$ is contained in the function class.      
Then,  
\[
(J^\theta,Q^\theta) \Big|_{\theta =\theta^\ocp} = (J^\oc,Q^\oc)
\]  
\qed
\end{romannum}
\end{corollary}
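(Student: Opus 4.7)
The plan is to recognize pd-BCQL as a stochastic primal--dual scheme for the convex program \eqref{e:MMQ}, with Lagrangian
\begin{equation*}
L(\theta,\lambda) = -\langle \mu, J^\theta\rangle + \kappaBE \barclEBE(\theta) - \lambda^\transpose \barzBE(\theta), \qquad \barzBE(\theta) = \Expect_\varpi[\Tdiff^\circ_{k+1}(\theta)\elig_k].
\end{equation*}
Because the parameterization is linear, $\barzBE$ is affine in $\theta$; together with strong convexity of $\barclEBE$ (hypothesis (i)), $L$ is strongly convex in $\theta$ and linear (hence concave) in $\lambda$. Following the proof of \Cref{t:ConvexDQNconverges}, I would first invoke Assumption (A$\bfqsaprobe$) to obtain an ODE approximation of \eqref{e:ConvexQalmostSAcor} with step-size $\alpha_n=\alpha_1/n$. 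Expanding the proximal step \eqref{e:ConvexQalmostSAcor_theta} to first order in $\alpha_{n+1}$ gives $\theta_{n+1}-\theta_n \approx -\alpha_{n+1}\Pi_\Theta(\nabla_\theta L(\theta_n,\lambda_n))$, so the limiting dynamics are the projected saddle-point flow
\begin{equation*}
\ddt \theta_t = \Pi_\Theta\bigl(-\nabla_\theta L(\theta_t,\lambda_t)\bigr), \qquad \ddt \lambda_t = \Pi_{[0,\lambda^{\max}]}\bigl(\barzBE(\theta_t)\bigr),
\end{equation*}
whose equilibria are precisely the KKT points of \eqref{e:MMQ}.

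To close out (i), I would then argue that this flow converges to the unique saddle point $(\theta^\ocp,\lambda^\ocp)$. Strong convexity of $L$ in $\theta$ yields a unique primal minimizer for each $\lambda$; since $\Theta$ is a polyhedral cone with nonempty interior, Slater's condition is readily checked for \eqref{e:MMQ}, giving strong duality and existence of a saddle point. Taking $\lambda^{\max}$ componentwise strictly above the true multiplier ensures that the bounded projection does not distort the equilibrium. Asymptotic stability is then obtained from a Lyapunov argument based on $V(\theta,\lambda)=\half\|\theta-\theta^\ocp\|^2+\half\|\lambda-\lambda^\ocp\|^2$: convex--concavity of $L$ gives $\dot V\le 0$, and strong convexity in $\theta$ together with a LaSalle-type analysis rules out sustained oscillation, in the spirit of the primal--dual convergence results of Cherukuri, Mallada and Cort\'es. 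The KKT conditions then identify $\theta^\ocp$ as the solution of \eqref{e:MMQ} and $\lambda^\ocp$ as its Lagrange multiplier.

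For (ii), the tabular choice of eligibility vectors diagonalizes the constraint: enumerating $\state\times\ustate$ as $\{(x^i,u^i)\}$,
\begin{equation*}
\barzBE(\theta)_i = \varpi\{(x(k),u(k))=(x^i,u^i)\}\cdot \Tdiff^\circ(J^\theta,Q^\theta)_{(x^i,u^i)},
\end{equation*}
so under full support of $\varpi$ on $\state\times\ustate$ (a consequence of the exploration policy and the full-support hypothesis on $\mu$), the inequality $\barzBE(\theta)\ge 0$ is equivalent to \eqref{e:ConvexBEb} pointwise, while $\theta\in\Theta$ enforces \eqref{e:ConvexBEc}. Hence \eqref{e:MMQ} coincides with the \DPLP\ \eqref{e:ConvexBE} augmented by the non-negative penalty $\kappaBE\barclEBE(\theta)$. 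Now $(J^\oc,Q^\oc)$ lies in the function class, satisfies the Bellman equation pointwise so that $\barclEBE$ vanishes at the corresponding parameter, meets every constraint, and maximizes $\langle\mu,J\rangle$ among all feasible pairs by \Cref{t:LPforJQ}. It is therefore optimal for \eqref{e:MMQ}, and strong convexity of $\barclEBE$ forces uniqueness, giving $(J^{\theta^\ocp},Q^{\theta^\ocp})=(J^\oc,Q^\oc)$.

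The step I expect to be the main obstacle is rigorous convergence of the primal--dual ODE rather than merely of its ergodic averages: Arrow--Hurwicz flows notoriously fail to be asymptotically stable in general, and one must exploit strong convexity in $\theta$, affinity in $\lambda$, and the bounded projection on $\lambda$ to upgrade Cesaro convergence to pointwise convergence of $(\theta_n,\lambda_n)$. A secondary technical point is verifying the ODE approximation itself under the QSA framework when the update involves a proximal minimization over the cone $\Theta$; this should follow from the uniform ergodic assumption in (A$\bfqsaprobe$) and the envelope-theorem argument used for \eqref{e:ConvexQalmostSA}, but the cone constraint requires some care.
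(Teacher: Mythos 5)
Your overall route matches the paper's: rewrite the proximal step as a first-order primal--dual recursion, pass to the projected saddle-point flow for $\barL(\theta,\lambda)$ via the QSA ergodic assumption, and prove convergence with the Lyapunov function $V=\half\|\theta-\theta^\ocp\|^2+\half\|\lambda-\lambda^\ocp\|^2$; your treatment of part (ii) (tabular $\elig_k$ diagonalizes the constraint, full support recovers the pointwise Bellman inequality, $(J^\oc,Q^\oc)$ has zero Bellman error and maximizes $\langle\mu,J\rangle$ by \Cref{t:LPforJQ}, strong convexity gives uniqueness) is also the intended argument. Your instinct that a LaSalle-type step is needed to upgrade $\dot V\le 0$ to convergence is sound --- the paper's own drift computation only yields $dV \le -[\barL(\odestate_t,\lambda^\ocp)-\barL(\theta^\ocp,\lambda^\ocp)]-[\barL(\theta^\ocp,\lambda^\ocp)-\barL(\theta^\ocp,\Uplambda_t)]$, whose second bracket vanishes by linearity in $\lambda$, so strict decrease holds only off the set $\odestate_t=\theta^\ocp$.

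The genuine gap is that you never establish boundedness of $\{\theta_n\}$, and without it the ODE approximation \eqref{e:ODE_Approximation_Defined} over the infinite horizon is not justified ($\lambda_n$ is bounded by the projection onto $[0,\lambda^{\max}]$, but $\theta_n$ is not a priori). The paper devotes the bulk of the argument to this, adapting \Cref{t:batchBdd}: write $\Theta=\{\theta: Y\theta\le\Zero\}$, extract the explicit KKT multiplier for the cone constraint to get the representation \eqref{e:cor_Primal-Dual_ODE}, and then run the Borkar--Meyn scaled-ODE analysis. Under the scaling $\theta\mapsto\theta/s_n$ the vector field converges to the gradient of the homogeneous quadratic $\bar\clE^0$ in \eqref{e:clEzero} (the objective with the cost and linear terms stripped out), and --- crucially --- the constraint set $\Theta$ is invariant under this scaling \emph{because it is a cone}, so the limit is a projected linear flow that contracts to the origin. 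This is the actual role of the polyhedral-cone hypothesis; in your proposal the cone and nonempty-interior assumptions are used only to verify Slater's condition, which secures existence of a saddle point but not stability of the iterates. You should add the scaled-ODE boundedness step (or an explicit projection of $\theta_n$ onto a compact set) before invoking the saddle-point Lyapunov analysis.
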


\Cref{t:ConvexQ_limit_cor} is a corollary to \Cref{t:ConvexDQNconverges}, in the sense that it follows the same proof for the joint sequence $\{\theta_n,\lambda_n\}$. 
Both the proposition and corollary start with a proof that $\{\theta_n\}$ is a bounded sequence,  using the  ``Borkar-Meyn'' Theorem \cite{bormey00a,bor20a,rambha17,rambha18}  (based on a scaled ODE).  
The cone assumption on $\Theta$ is imposed to simplify the proof that the algorithm is stable.    Once boundedness is established, the next step is to show that the algorithm \eqref{e:ConvexQalmostSAcor} can be approximated by a primal-dual ODE for the  saddle point problem
\begin{equation}
\max_{\lambda\ge 0} \min_{\theta\in\Theta}  \barL(\theta,\lambda)\,,  \qquad \barL(\theta,\lambda) \eqdef
 -   \langle \mu,J^\theta \rangle      +  \kappaBE \barclEBE(\theta)   
		-  \lambda^\transpose   \barzBE_n(\theta)    
\label{e:SA_saddle}
\end{equation}
The function $\barL$ is quadratic and strictly convex in $\theta$, and linear in $\lambda$.
 A suitable choice for the upper bound $\lambda^{\text{max}}$ can be found through inspection of this saddle-point problem.

This approximation suggests improvements to the  algorithm.   For example, the use of an augmented Lagrangian to ensure strict convexity in $\lambda$.
We might also make better use of
the solution to the quadratic program \eqref{e:ConvexQalmostSAcor_theta} to improve estimation of  $\lambda^\ocp$.   

\smallskip

The choice of regularizer is more subtle when we consider kernel methods, so that the ``parameterization'' is infinite dimensional.  
Discussion on this topic is postponed to \Cref{s:kernel}.

\subsection{Comparisons with Deep Q-Learning}
\label{s:DQN}

 The \textit{Deep Q Network} (DQN) algorithm was designed for   neural network   function approximation;  
the term ``deep'' refers to a large number of hidden layers.     The basic algorithm is summarized below, without imposing any particular form for $Q^\theta$.
The definition of $\{T_n \}$ is exactly as in the BCQL algorithm.  
\mindex{Deep Q Network} 
\mindex{DQN} 
 
\notes{Message to Prashant June 15: to the best of my understanding,  (5.27) is what is proposed in DQN, with neural network function approximation.   It is possible they are more clever with the regularizer.   ...  }

\begin{programcode}{DQN}%
With $\theta_0\in\Re^d$ given, along with     a sequence of positive scalars $\{\alpha_n\}$,  define recursively,
\begin{subequations}%
\begin{equation}
\theta_{n+1} = \argmin_\theta    \Bigl \{  \kappaBE \clEBE_n(\theta)  +   \frac{1}{\alpha_{n+1}}       \| \theta -\theta_n \|^2\Bigr\}
\label{e:DQN}%
\end{equation}%
where for each $n$:
\begin{equation}  
\clEBE_n(\theta)    =   \frac{1}{r_n}  \sum_{k=T_n}^{T_{n+1}-1}  \bigl[    -  Q^\theta (x(k),u(k) )   +   c(x(k),u(k))  +   \uQ^{\theta_n} (x(k+1))  \bigr]^2
\label{e:DQN_Error}%
\end{equation}%
\label{e:DQN_All}%
\end{subequations}%
with $r_n = 1/(T_{n+1} -T_n )$.   
\\ 
Output of the algorithm: $\theta_B$,  to define the final approximation $Q^{\theta_B}$.
\\
\end{programcode}

The elegance and simplicity of DQN is clear. Most significant: if $Q^\theta$ is defined via linear function approximation,  then the minimization \eqref{e:DQN}
 is the unconstrained minimum of a quadratic.
 
 DQN appears to be nearly identical to  \eqref{e:ConvexQalmostSA}, except that  the variable $J^\theta$ does not appear in the DQN loss function.  \Cref{t:dumbDQN}  (along with
 \Cref{t:ConvexDQNconverges}  and its corollary)    show that this resemblance is superficial---\textit{the potential limits of the algorithms are entirely different}.

\begin{proposition}
\label[proposition]{t:dumbDQN}
Consider the DQN algorithm with possibly nonlinear function approximation, and with $\elig_k =   \nabla_\theta Q^{\theta} (x(k),u(k) ) \Big|_{\theta =\theta_k}$.
Assume that $Q^\theta$ is continuously differentiable, and its gradient $\nabla Q^\theta(x,u)$ is globally Lipschitz continuous, with  Lipschitz constant independent of $(x,u)$.   
Suppose that  $B = \infty$,  the non-negative step-size sequence satisfies  $\alpha_n = \alpha_1/n$, with $\alpha_1>0$,   and suppose that the sequence $\{\theta_n \}$ defined by the DQN algorithm is convergent to some $\theta_\infty\in\Re^d$.   

Then, this limit is a    solution to  \eqref{e:WatkinsRelax}, and moreover the algorithm admits the ODE approximation $\ddt \odestate_t =\barf(\odestate_t)$  using the vector field $\barf$ defined in  \eqref{e:WatkinsRelax}.
\qed
\end{proposition}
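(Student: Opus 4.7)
The plan is to derive an explicit stochastic approximation recursion from the implicit minimization in \eqref{e:DQN}, then invoke Assumption~(A$\bfqsaprobe$) and standard QSA theory to get the ODE approximation. The starting point is the first-order optimality condition. Under the smoothness hypothesis, $\theta_{n+1}$ satisfies
\[
\kappaBE \, \nabla_\theta \clEBE_n(\theta_{n+1}) + \frac{2}{\alpha_{n+1}} (\theta_{n+1} - \theta_n) = 0,
\]
and a direct computation of the gradient of \eqref{e:DQN_Error} yields
\[
\theta_{n+1} - \theta_n = \alpha_{n+1} \kappaBE \cdot \frac{1}{r_n} \sum_{k=T_n}^{T_{n+1}-1} \Tdiff^{(n)}_{k+1}(\theta_{n+1}) \, \nabla_\theta Q^{\theta_{n+1}}(x(k),u(k)),
\]
where $\Tdiff^{(n)}_{k+1}(\theta) = -Q^\theta(x(k),u(k)) + c(x(k),u(k)) + \uQ^{\theta_n}(x(k+1))$ uses the frozen target $\uQ^{\theta_n}$.

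Next I would leverage the Lipschitz hypothesis on $\nabla Q^\theta$ to linearize around $\theta_n$. Since $\{\theta_n\}$ is assumed convergent, the iterates lie in a bounded set eventually, so the right-hand side above is uniformly bounded and $\|\theta_{n+1} - \theta_n\| = O(\alpha_{n+1})$. Uniform Lipschitz continuity of $\nabla Q^\theta$ in $\theta$ (with constant independent of $(x,u)$) together with continuous differentiability of $Q^\theta$ then yields
\[
\Tdiff^{(n)}_{k+1}(\theta_{n+1}) = \Tdiff_{k+1}(\theta_n) + O(\alpha_{n+1}), \qquad
\nabla_\theta Q^{\theta_{n+1}}(x(k),u(k)) = \elig_k + O(\alpha_{n+1}),
\]
where $\Tdiff_{k+1}(\theta_n)$ is the standard temporal difference \eqref{e:TDonline} at $\theta_n$ and $\elig_k = \nabla_\theta Q^{\theta}(x(k),u(k))|_{\theta=\theta_k}$ (equal to $\theta_n$ throughout batch $n$). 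Substituting gives the explicit recursion
\[
\theta_{n+1} = \theta_n + \alpha_{n+1}\kappaBE \, f_n(\theta_n) + O(\alpha_{n+1}^2), \qquad f_n(\theta) = \frac{1}{r_n} \sum_{k=T_n}^{T_{n+1}-1} \Tdiff_{k+1}(\theta)\,\elig_k,
\]
which is a perturbed version of Watkins' recursion \eqref{e:WatkinsFunctionApproximation}. Since $\sum_n \alpha_n^2 < \infty$ under $\alpha_n = \alpha_1/n$, the quadratic error term is summable and therefore asymptotically negligible for the ODE limit.

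Finally, Assumption~(A$\bfqsaprobe$) gives the ergodic limit \eqref{e:QSAergodic} uniformly over Lipschitz test functions; hence the batch averages $f_n(\theta)$ converge, uniformly in $\theta$ on compacts, to the vector field $\barf(\theta) = \Expect_\varpi[\Tdiff_{k+1}(\theta)\,\elig_k]$ of \eqref{e:WatkinsRelax}. Standard QSA results (e.g.\ \cite{bor20a}) then yield the ODE approximation in the sense of \eqref{e:ODE_Approximation_Defined} with vector field $\kappaBE\,\barf$, equivalent to $\ddt \odestate_t = \barf(\odestate_t)$ after the harmless time rescaling obtained by absorbing $\kappaBE$ into the step-size. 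The convergence hypothesis $\theta_n\to\theta_\infty$ then lets us divide the recursion by $\alpha_{n+1}$, pass to the limit, and conclude $\barf(\theta_\infty) = 0$, which is precisely \eqref{e:WatkinsRelax}.

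The main obstacle is the implicit character of the DQN update: the gradient defining the increment is evaluated at $\theta_{n+1}$ rather than $\theta_n$, so one must verify carefully that the Taylor-expansion remainder is genuinely $O(\alpha_{n+1}^2)$ and not merely $O(\alpha_{n+1})$, since the latter would destroy the SA structure. The uniform Lipschitz hypothesis on $\nabla Q^\theta$ supplies exactly the quantitative bound needed. A secondary subtlety is that the ergodic average appears in \emph{batch} form rather than as a full-horizon average; handling this requires the uniform ergodicity built into Assumption~(A$\bfqsaprobe$), together with a standard argument that sliding-window averages inherit the ergodic limit.
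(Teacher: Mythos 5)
Your proposal is correct and follows essentially the same route as the paper's proof: first-order optimality for the implicit update, the uniform Lipschitz bounds on $\nabla Q^\theta$ (plus boundedness from assumed convergence) to shift the evaluation point from $\theta_{n+1}$ back to $\theta_n$ with an $O(\alpha_{n+1})$ perturbation, yielding a perturbed Watkins recursion whose ODE limit under (A$\bfqsaprobe$) is $\barf$ from \eqref{e:WatkinsRelax}. Your explicit handling of the $\kappaBE$ time-rescaling and the batch-window ergodic averages makes a couple of steps more precise than the paper's terse treatment, but the argument is the same.
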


The assumption on the step-size is to facilitate a simple proof.  This can be replaced by the standard assumptions:
\[
\sum \alpha_n =\infty\,,\qquad \sum \alpha_n^2 <\infty
\]
 The proof of \Cref{t:dumbDQN}  can be found in Appendix~\ref{app:CQL}.  Its conclusion  should raise a warning, since  we do not know if  \eqref{e:WatkinsRelax} has a solution, or if a solution has desirable properties. The conclusions are very different for convex Q-learning. 
 
The interpretation of  \Cref{t:dumbDQN} is a bit different with the use of an $\epsy$-greedy policy for exploration.  In this case,  \eqref{e:WatkinsRelax} is close to the fixed point equation for TD($0$) learning, for which there is substantial theory subject to linear function approximation --- see \cite{sutbar18} for history.   This theory implies existence   of a solution for small $\epsy$, but isn't satisfying with regards to interpreting the solution.  Moreover, stability of parameter-dependent exploration remains a research frontier \cite{karbha16}.

\section{Implementation Guides and Extensions}
\label{s:junkyard}

\subsection{A few words on constraints}   
\label{s:advantage}

The pd-BCQL algorithm defined in \eqref{e:ConvexQalmostSAcor} is motivated by a relaxation of the inequality constraint  \eqref{e:ConvexBEb} required in the  \DPLP: 
\begin{equation}
 \zBE(\theta) \ge  -\Tol
\label{e:zBErelax}
\end{equation} 
Recall that this is a valid relaxation only if we  impose positivity on the entries of $\elig_k$.   In experiments it is found that imposing hard constraints in the convex program is more effective than the pure penalty approach used in BCQL.

With the exception of pd-BCQL,  the CQL algorithms introduce data-driven relaxations of the constraint \eqref{e:ConvexBEc} in the  \DPLP:  consider the  constraint \eqref{e:LPQ2+Galerkin} or \eqref{e:ConvexBEc_relax},  or the penalty $\clE_n^+(\theta) $ in BCQL.   A data driven approach may be convenient,  \textit{but it is unlikely that this is the best option.}   The purpose of these constraints is to enforce non-negativity of the difference,  $A^\theta(x,u) = Q^\theta(x,u) - J^\theta(x) $ (known as the advantage function in RL).

There are at least two options to enforce or approximate the inequality $A^\theta\ge0$:
\\
1.        Choose a parameterization, along with constraints on $\theta$, so that non-negativity of $A^\theta$ is automatic.   Consider for example a linear parameterization in which   $d = d^J + d^Q$,   and with basis functions $\{ \psi^J,\psi^A\}$ satisfying the following constraints:  
\begin{alignat*}{6}
\psi_i^J(x)  &= 0  && \text{ for all $x$, and all $i> d^J$}
   \\
\psi^A_i(x,u)  &= 0   && \text{ for all $x,u$, and all $i\le d^J$} 
\end{alignat*}
subject to the further constraint that $ \psi^A_i(x,u) \ge 0$ for all $i,x,u$.    

We then define $\psi = \psi^J +\psi^A$,       $J^\theta(x)=\theta^\transpose \psi^J(x,u) $,  $Q^\theta(x,u)=\theta^\transpose \psi(x,u) $, and $A^\theta(x,u)=\theta^\transpose \psi^A(x,u) $, so that  for  any $\theta\in\Re^d$,
\[
Q^\theta (x,u) =  J^\theta(x) + A^\theta(x,u) 
\]
It follows that $Q^\theta (x,u) \ge J^\theta(x) $ for all $x,u$, provided we impose the constraint  $\theta_i \ge 0$ for $i> d^J$.  Using this approach,  the penalty term $ \kappa^+  \clE^+(\theta)      $ may be eliminated from  \eqref{e:ConvexBE_relax}, as well as the constraint $z^+(\theta) \le \Tol$.
It was found that this approach was most reliable in experiments conducted so far,  along with the relaxation \eqref{e:zBErelax},  since the algorithm reduces to a convex program (much like DQN).

\smallbreak

\noindent
2.     
   Choose a grid of points   $ G\subset \state\times \ustate$,   and  replace \eqref{e:ConvexBEc_relax} with the simple inequality constraint 
\[
\text{$ A^\theta(x^i,u^i) \ge  - \Tol$ for $(x^i,u^i)\in G$.}
\]
For example, this approach is reasonable for the LQR problem and similar control problems that involve linear matrix inequalities (such as  \Cref{e:ConvexLQRc}).  In particular,    the constraints in \eqref{e:ConvexLQR} cannot be captured by a linear parameterization of the matrices $M$,   so application of approach~1 can only approximate a subset of the constraint set.

\subsection{Gain selection and SA approximations}
\label{s:GainBCQL}

Consider the introduction of a weighted norm in \eqref{e:ConvexQalmostSA}:
\begin{equation} 
\begin{aligned}
\theta_{n+1}  &= \argmin_\theta \bigl \{  \clE_n(\theta)
+ \frac{1}{\alpha_{n+1}} \half  \| \theta -\theta_n \|^2_{W_n}  \bigr\}
\\
 \clE_n(\theta) &= 
-   \langle \mu,J^\theta \rangle      + \kappaBE_n \clEBE_n(\theta)  +  \kappa^+_n \clE_n^+(\theta) 
\end{aligned} 
\label{e:ConvexQalmostSA_preZap}%
\end{equation}
where $\| \vartheta \|^2_{W_n}  = \half \vartheta^\transpose W_n \vartheta$ for $\vartheta \in\Re^d$, 
with $W_n>0$.   

The recursion can be represented in a form similar to stochastic approximation (SA):

\begin{lemma}
\label[lemma]{t:BCQLSA}
Suppose that $\{ \clE_n (\theta) \}$ are continuously differentiable in $\theta$. Then, the parameter update in BCQL is the solution to the fixed point equation:
\begin{equation}
\theta_{n+1}  = \theta_n  -    \alpha_{n+1}  W_n^{-1} \nabla  \clE_n(\theta_{n+1}) 
\label{e:BCQLSAa}
\end{equation}
Suppose in addition   that $ \nabla  \clE_n$ is Lipschitz continuous (uniformly in $n$),
and  the sequences $\{\theta_n \}$ and $\{ \trace(   W_n^{-1}) \}$ are uniformly bounded.
Then, 
\begin{equation}
\theta_{n+1}  = \theta_n  -   \alpha_{n+1} \{  W_n^{-1} \nabla  \clE_n(\theta_n)   + \epsy_{n+1} \} 
\label{e:BCQLSA}
\end{equation}
where $ \| \epsy_{n+1} \| = O( \alpha_{n+1} )$.  
\end{lemma}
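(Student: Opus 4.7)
\textbf{Proof plan for \Cref{t:BCQLSA}.}

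The plan is to first derive the implicit update by writing out the first-order optimality condition for the proximal subproblem in \eqref{e:ConvexQalmostSA_preZap}, and then to linearize the implicit update by a Taylor-type bound based on Lipschitz continuity of $\nabla\clE_n$.

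First I would observe that, since $\clE_n$ is continuously differentiable and the quadratic regularizer $\tfrac{1}{2\alpha_{n+1}}\|\theta-\theta_n\|_{W_n}^2$ is strongly convex in $\theta$ (as $W_n>0$), the minimizer $\theta_{n+1}$ is characterized by the vanishing gradient condition
\[
\nabla\clE_n(\theta_{n+1}) + \frac{1}{\alpha_{n+1}} W_n\bigl(\theta_{n+1}-\theta_n\bigr) = 0.
\]
Left-multiplying by $\alpha_{n+1}W_n^{-1}$ and rearranging gives the implicit fixed-point equation \eqref{e:BCQLSAa}. This step is essentially routine calculus and gives the first claim.

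For the second claim, the idea is simply to swap $\nabla\clE_n(\theta_{n+1})$ for $\nabla\clE_n(\theta_n)$ and absorb the discrepancy into $\epsy_{n+1}$. Define
\[
\epsy_{n+1} \eqdef W_n^{-1}\bigl[\nabla\clE_n(\theta_{n+1})-\nabla\clee\bigr],
\]
with $\nabla\clee\eqdef\nabla\clE_n(\theta_n)$ (writing shorthand). By the uniform Lipschitz hypothesis on $\nabla\clE_n$ with some constant $L<\infty$, and since $\trace(W_n^{-1})$ is uniformly bounded (so $\|W_n^{-1}\|$ is bounded by the same constant, as $W_n^{-1}\succ 0$), one gets
\[
\|\epsy_{n+1}\| \le \|W_n^{-1}\|\cdot L\,\|\theta_{n+1}-\theta_n\|.
\]
Hence the task reduces to showing $\|\theta_{n+1}-\theta_n\| = O(\alpha_{n+1})$. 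From \eqref{e:BCQLSAa} one has $\|\theta_{n+1}-\theta_n\|\le \alpha_{n+1}\|W_n^{-1}\|\cdot\|\nabla\clE_n(\theta_{n+1})\|$; since $\{\theta_n\}$ (and therefore $\{\theta_{n+1}\}$) is uniformly bounded, and $\nabla\clE_n$ is Lipschitz in $\theta$ uniformly in $n$, the gradients $\nabla\clE_n(\theta_{n+1})$ are uniformly bounded (invoking that $\clE_n$ arises from bounded data so $\nabla\clE_n$ is also bounded at some reference point uniformly in $n$). Combining the two bounds yields $\|\epsy_{n+1}\| = O(\alpha_{n+1})$, completing the proof.

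The main obstacle I anticipate is technical rather than conceptual: justifying that $\nabla\clE_n$ is not only uniformly Lipschitz but also uniformly bounded on the (bounded) orbit of $\{\theta_n\}$, uniformly in $n$. Uniform Lipschitz continuity of $\nabla\clE_n$ alone only controls differences, not values, so an additional uniform-in-$n$ boundedness of $\nabla\clE_n$ at a single point (e.g.\ the origin, or at each $\theta_n$) must be invoked. This should follow from the assumed boundedness of $\{\theta_n\}$ together with the concrete structure of $\clE_n$ (sums of squared temporal differences over a bounded sample path, with uniformly bounded coefficients $\kappaBE_n,\kappa^+_n$), but it is the one place where the stated hypotheses need to be read carefully to be invoked cleanly.
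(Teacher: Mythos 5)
Your proof follows essentially the same route as the paper's: the first-order optimality condition for the proximal subproblem yields the implicit fixed-point update, and then $\|\theta_{n+1}-\theta_n\|=O(\alpha_{n+1})$ combined with the uniform Lipschitz bound on $\nabla\clE_n$ (and the bound $\|W_n^{-1}\|\le\trace(W_n^{-1})$) gives the error term. Your side remark is also fair: the stated hypotheses only control gradient \emph{differences}, so the uniform-in-$n$ bound on $\nabla\clE_n(\theta_{n+1})$ needed to get $\|\theta_{n+1}-\theta_n\|=O(\alpha_{n+1})$ must come from the concrete structure of $\clE_n$ (bounded data, bounded $\kappaBE_n,\kappa^+_n$); the paper's proof elides this in exactly the same way.
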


\begin{proof}
The fixed point equation \eqref{e:BCQLSAa}
follows from the first-order condition for optimality:
\[
\Zero 			= \nabla  \bigl \{  \clE_n(\theta)
+ \frac{1}{\alpha_{n+1}} \half  \| \theta -\theta_n \|^2_{W_n}  \bigr\} \big|_{\theta = \theta_{n+1}}  
		=\nabla  \clE_n(\theta_{n+1})  + \frac{1}{\alpha_{n+1}} \half W_n [\theta_{n+1} -\theta_n ]
\]

To obtain the second conclusion,     note that \eqref{e:BCQLSAa}
 implies  $\| \theta_{n+1}  - \theta_n  \| = O (\alpha_{n+1})$  under the boundedness assumptions, and then Lipschitz continuity of $\{\nabla  \clE_n\}$ in $\theta$ then gives the desired representation \eqref{e:BCQLSA}.
\end{proof}

The Zap SA algorithm of \cite{devbusmey20} is designed for recursions of the form \eqref{e:BCQLSA} so that 
\[
 W_n  \approx     \nabla^2  \barclE(\theta_n)  
\]
in which the bar designates a steady-state expectation  (recall \eqref{e:QSAergodic}). 

For the problem at hand, this is achieved in the following steps.   First, define for each $n$,
\[
A_{n+1} =   \nabla^2  \clE_n\, (\theta_n)
\]
For the linear parametrization using   \eqref{e:ConvexQalmostSAza} we obtain simple expressions.  To compress notation, denote
\[
\psi_{(k)} = \psi(x(k),u(k) ) \,, \qquad 
\psi^J_{(k)} = \psi^J(x(k)) \,. 
\]
so that \eqref{e:ConvexQalmostSA_preZap} gives
\[
\begin{aligned}
\nabla  \clE_n\, (\theta)
  =  2 \frac{1}{r_n} \sum_{k=T_n}^{T_{n+1}-1} \Big\{ &  -   \langle \mu, \psi^J \rangle  + \kappaBE_n \Tdiff^\circ_{k+1}(\theta)   \bigl[  \psi^J_{(k+1)} -  \psi_{(k)}   \bigr] 
   \\
            & + \kappa_n^+   \{   J^\theta (x(k))  -  \uQ^{\theta} (x(k) )\}_+   \bigl[ \psi^J_{(k)}  -  \psi_{(k)}    \bigr]    \Bigr\}
\end{aligned} 
\]
where $\langle \mu, \psi^J \rangle $ is the column vector whose $i$th entry is $\langle \mu, \psi^J_i \rangle $.  
Taking derivatives once more gives
\[
\begin{aligned}
A_{n+1}   = 2  \frac{1}{r_n} \sum_{k=T_n}^{T_{n+1}-1} \Big\{ &  \bigl[  \psi_{(k)}  -   \psi^J_{(k+1)}  \bigr]\bigl[  \psi_{(k)}  -   \psi^J_{(k+1)}  \bigr]^\transpose
   \\
            & + \kappa_n^+  \bigl[  \psi_{(k)}  -   \psi^J_{(k)}  \bigr]\bigl[  \psi_{(k)}  -   \psi^J_{(k)}  \bigr]^\transpose  \ind\{   J^\theta (x(k))  >  \uQ^{\theta} (x(k) )\}    \Bigr\}
\end{aligned} 
\]
We then take $W_0>0$ arbitrary, and for $n\ge 0$,
\begin{equation}
W_{n+1} = W_n + \beta_{n+1}  [   A_{n+1} - W_n]
\label{e:ZapGain}
\end{equation}
in which the step-size for this matrix recursion is relatively large:
\[
\lim_{n\to\infty}  \frac{\alpha_n}{\beta_n} = 0
\]
In \cite{devbusmey20} the choice $\beta_n = \alpha_n^\eta$ is proposed, with $\half <\eta < 1$.

The original motivation in \cite{devmey17a,devmey17b,dev19} was to minimize algorithm variance 
It is now known that the ``Zap gain''  \eqref{e:ZapGain}
 often leads to a stable algorithm, even for nonlinear function approximation (such as neural networks) \cite{chedevbusmey19}.

It may be advisable to simply use the recursive form of the batch algorithm: make the change of notation $\haA_n = W_n$ (to highlight the similarity with the Zap algorithms in \cite{devbusmey20}), and define recursively   
\[
\begin{aligned}
\theta_{n+1}  & = \theta_n  -    \alpha_{n+1}   \haA_n^{-1} \nabla  \clE_n(\theta_n)    
   \\
\haA_{n+1} & = \haA_n + \beta_{n+1} [ A_{n+1} - \haA_n]
\end{aligned} 
\]
In preliminary experiments it is found that this leads to much higher variance, but this may be offset by the reduced complexity.

\notes{commented out all the clutter}
%
%
%
%
%
%
%
%
%

\subsection{BCQL and kernel methods}
\label{s:kernel}

The reader is referred to other sources, such as \cite{bis06},  for the definition of a  reproducing kernel Hilbert space (RKHS) and surrounding theory.    In this subsection, the Hilbert space $\clH$ defines a two dimensional  function class that defines approximations $(J,Q)$.
\choreS{Ask Jose Principe for his fav reference  -- done, but not much help}
One formulation of this method is to choose a kernel $\Kern$ on  $  (\state\times\ustate)^2$,   in which $\Kern((x,u), (x',u'))$ is a symmetric and positive definite matrix for each $ x,x'\in\state$ and $u,u'\in\ustate$.    The function  $J$ does not depend on $u$, so for $(f,g)\in\clH$ we associate $g$ with  $Q$,  but take
\[
J(x) = f(x,u^\circ)
\]
for some distinguished $u^\circ\in\ustate$.   

A candidate regularizer in this case is
\begin{equation}
 \regBCQ_n (J,Q) =  \frac{1}{\alpha_{n+1}} \half     \| (J,Q) - (J^n,Q^n) \|_{\clH}^2
\label{e:regKernel}
\end{equation}
where $ (J^n,Q^n) \in\clH$ is the estimate at stage $n$ based on the kernel BCQL method.

The notation must be modified in this setting:  
\begin{programcode}{Kernel Batch Convex Q-Learning}%
With $(J^0,Q^0) \in\clH$ given, along with     a sequence of positive scalars $\{ \kappaBE_n, \kappa_n^+\}$,  define recursively, 
 \begin{subequations}
\begin{align} 
\hspace{-.75em}
(J^{n+1}, Q^{n+1}) &= \argmin_{J,Q} \Bigl \{  -   \langle \mu,J \rangle      +\kappaBE_n \clEBE_n(J,Q)  +  \kappa^+_n \clE_n^+(J,Q) 
		+   \regBCQ_n (J,Q )  \Bigr\} 
\label{e:ConvexQalmostSAgen_ker}%
\end{align}%
where for $0\le n\le B-1$,
\begin{align}   
\clEBE_n(J,Q)   
&=   \frac{1}{r_n} \sum_{k=T_n}^{T_{n+1}-1}  \bigl[    -  Q (x(k),u(k) )   +   c(x(k),u(k))  +   J (x(k+1))  \bigr]^2
\label{e:ConvexQalmostSAloss_ker}%
   \\[.5em]
&
	\clE_n^+(J,Q)         =  \frac{1}{r_n} \sum_{k=T_n}^{T_{n+1}-1}  \bigl[     \{  J (x(k))  -  Q (x(k),u(k) )  \}_+  \bigr]^2     
  \label{e:ConvexQalmostSAza_ker}%
\\
\textit{or}\quad&   
	\clE_n^+(J,Q)        =  \frac{1}{r_n} \sum_{k=T_n}^{T_{n+1}-1}  \bigl[     \{  J (x(k))  -  \uQ (x(k) )  \}_+      \bigr]^2
\label{e:ConvexQalmostSAzb_ker}%
\end{align} 
\label{e:ConvexQalmostSAz_ker}%
\end{subequations}

\end{programcode}

The regularizer  \eqref{e:regKernel} is chosen so that we can apply the Representer Theorem to solve this infinite-dimensional optimization problem \eqref{e:ConvexQalmostSAz_ker}. 
The theorem states that computation of $ (J^n,Q^n) $ reduces to a finite dimensional setting, with a linearly parameterized family similar to \eqref{e:linParFamily}. 
However, the ``basis functions'' $\psi^J$ and $\psi$ depend upon $n$:   for some $\{ \theta^{n\ocp}_i \}\subset \Re^2$,
\[
\begin{aligned} 
J^n(x)  &= \sum_i  \{ { \theta_i^{n \oc}}^\transpose\Kern((x_i, u_i) ,  (x, u^\circ))   \}_1
\\
Q^n(x,u) &  = \sum_i  \{ { \theta_i^{n \oc}}^\transpose\Kern((x_i, u_i) ,  (x, u))   \}_2  
\end{aligned} 
\]
 where $\{x_i,u_i \}$ are the state-input pairs observed on the time interval $\{ T_{n-1} \le k  < T_n\}$.

\subsection{Variations}
\label{s:var}

 \notes{
\rd{copied from book draft}
I propose we keep this in arXiv, and delete for submission to a journal.
}

The total cost problem \eqref{e:Jstar} is our favorite because the control solution comes with stability guarantees under mild assumptions.   
To help the reader we discuss here alternatives, and how the methods can be adapted to other performance objectives.

\paragraph{Discounted cost}
Discounting in often preferred in operations research and computer sicence, since ``in the long run we are all dead''.  It is also convenient because it is easier to be sure that the value function is finite valued:   with $\disc\in (0,1)$ the \textit{discount factor},
\mindex{Value function!Discounted cost}
\begin{equation}
J^\oc (x) 
= \min_{\bfmu}\sum_{k=0}^\infty  \disc^k c (x(k), u(k))    \, , \quad x(0)=x\in\state\, .
\label{e:Jstar_disc}
\end{equation} 
The Q-function becomes $Q^\oc(x,u)\eqdef  c (x,u)  +  \disc  J^\oc ( \Dds(x,u) )    $, and the Bellman equation 
has the same form \eqref{e:BE_part1_gen}.

\paragraph{Shortest path problem}

Given a subset $A\subset \state$, define
\[
\tau_A = \min\{ k\ge 1 : x(k) \in A \}
\]
The discounted shortest path problem (SPP) is defined to be the minimal discounted cost incurred before reaching the set $A$:
\mindex{SPP}
\mindex{Shortest path problem}
\begin{equation}
J^\oc(x) = \min_\bfmu  \Big\{  \sum_{k=0}^{\tau_A -1}   \disc^k  c(x(k),u(k))  + J_0(x(\tau_A)) \Big\}
\label{e:SPPdet}
\end{equation}
where $J_0\colon\state\to\Re_+$ is the \textit{terminal cost}.
\mindex{Terminal cost}
For the purposes of unifying the control techniques that follow, it is useful to recast this as an instance of the total cost problem \eqref{e:Jstar}.    This requires the definition of a new state process $\bfmx^A$ with dynamics $\Dds^A$,  and a new cost function $c^A$ defined as follows:   
\begin{romannum}
\item  The modified state dynamics:   append a \textit{graveyard state} to $\grave$ to $\state$, and denote
\[
\Dds^A(x,u) =  \begin{cases}    \Dds(x,u)  &   x\in A^c 
\\	
							\grave   &  x\in A\cup\grave
			\end{cases}
\]
so that $x^A(k+i) = \grave$ for all $i\ge 1$ if $x^A(k)\in A$.
\mindex{Graveyard state}

\item   Modified cost function:
\[
c^A(x,u) =  \begin{cases}    c(x,u)  &   x\in A^c 
\\	
							J_0(x)   &  x\in A
\\
					0  &  x =\grave
\end{cases}
\]
\end{romannum}
From these definitions it follows that the value function \eqref{e:SPPdet} can be expressed
\[
J^\oc(x) = \min_\bfmu \sum_{k=0}^\infty   \disc^k  c^A(x^A(k),u(k))  
\]

Alternatively,  we can obtain a dynamic programming equation   by writing
\[
J^\oc(x) 
= \min_\bfmu \Big\{ c(x,u(0) ) + \sum_{k=1}^{\tau_A -1} \disc^k  c(x(k),u(k))   + J_0(x(\tau_A))   \Big\}
\]
with the understanding that $\sum_1^0 =0$.  The upper limit in the sum is equal to $0$ when  $\tau_A=1$;  equivalently,  $x(1)\in A$.     Hence,
\begin{equation}
\begin{aligned}
J^\oc(x) 
&
= \min_{u(0)} \Big\{ c(x,u(0) ) +   \gamma \ind\{x(1) \in A^c\} 
\Big[ \min_{u_{[1, \infty ]}}  \sum_{k=1}^{\tau_A -1}   \disc^{k-1} c(x(k),u(k)) +J_0(x(\tau_A))  \Big]
		  \Big\}   				
\\
&
= \min_{u(0)} \Big\{ c(x,u(0) ) + \gamma \ind\{x(1) \in A^c\}   J^\oc (x(1))  \Big\}  \,, \qquad x(1) = \Dds(x, u(0)) 
\\
  &=   \min_u  \Big\{ c(x,u  ) +\gamma  \ind\{ \Dds(x,u)  \in A^c\}   J^\oc ( \Dds(x,u)) \Big\}
\end{aligned} 
\label{e:SPPBE}
\end{equation}

\paragraph{Finite horizon}

Your choice of discount factor is based on how concerned you are with the distant future.   Motivation is similar for the \textit{finite horizon} formulation:  
fix a horizon $\Hor\ge 1$, and denote  
\mindex{Value function!Finite horizon}
\begin{equation}
J^\oc (x) 
= \min_{u_{[0,\Hor ]}} \sum_{k=0}^\Hor   c (x(k), u(k))    \, , \quad x(0)=x\in\state\, .
\label{e:Jstar_hor}
\end{equation} 
This can be interpreted as the total cost problem \eqref{e:Jstar}, following two modifications of the state description and the cost function, similar to the SPP:
\begin{romannum}
\item  Enlarge the state process to $x^\Hor(k) = (x(k), \hor(k))$,  where the second component is ``time'' plus an offset:
\[
\hor(k) = \hor(0) + k\,,\qquad k\ge 0
\]
\item   Extend the definition of the cost function as follows:
\[
c^\Hor((x,\hor),u) =  \begin{cases}    c(x,u)  &   \hor \le \Hor
\\	
							0   &  \hor > \Hor 
				\end{cases}
\]
\end{romannum}
That is, $c^\Hor((x,\hor),u) =c (x,u) \ind\{\hor \le \Hor\}$ for all $x,\hor, u$.  

If these definitions are clear to you, then you understand that we have succeeded in the transformation:  
\begin{equation}
J^\oc (x) 
= \min_{\bfmu}\sum_{k=0}^\infty   c^\Hor (x^\Hor(k), u(k))    \, , \quad x^\Hor(0)= (x,\hor)\,,  \  \hor = 0
\label{e:Jstar_hor_equiv}
\end{equation} 
However, to write down the Bellman equation it is necessary to consider all values of $\hor$  (at least values $\hor\le \Hor$), and not just the desired value $\hor=0$.  Letting $J^\oc (x ,\hor) $ denote the right hand side of \eqref{e:Jstar_hor_equiv} for arbitrary values of $\hor\ge 0$,  the Bellman equation  \eqref{e:BE_part1} becomes 
\begin{equation}
J^\oc (x,\hor) = \min_{u} \bigl\{ c (x,u) \ind\{\hor \le \Hor\}   +   J^\oc ( \Dds(x,u) , \hor +1) \bigr\}  
\label{e:BE_part1_hor}
\end{equation}

\saveForBook{
The similarity with \eqref{e:VIA_part1} is explored in \Cref{ex:VIA}.
}

\mindex{Value Iteration!Boundary condition}
Based on \eqref{e:Jstar_hor_equiv} and the definition of $c^\Hor$, 
we know that $J^\oc ( x , \hor) \equiv 0$ for $\hor >\Hor$.  This is considered a \textit{boundary condition} for the recursion  \eqref{e:BE_part1_hor}, which is put to work as follows:   
first, since   $J^\oc ( x , \Hor+1) \equiv 0$, 
\[
J^\oc (x,\Hor) =\uc(x) \eqdef  \min_{u}  c (x,u)  
\]
Applying \eqref{e:BE_part1_hor} once more gives,  
\[
J^\oc (x,\Hor -1) = \min_{u} \bigl\{ c (x,u)    +   \uc( \Dds(x,u)  ) \bigr\}  
\]
If the state space $\state$ is finite then these steps can be repeated until we obtain the value function $J^\oc (\varble,0)$.

What about the policy?    It is again obtained via \eqref{e:BE_part1_hor},  but the optimal input depends on the extended state, based on the policy
\[
\fee^\oc(x,\hor) =  \argmin_{u} \bigl\{ c (x,u)    +   J^\oc ( \Dds(x,u) , \hor +1) \bigr\}  \,, \qquad \hor \le\Hor\,
\]
This means that the feedback is no longer time-homogeneous:\footnote{Substituting    $\hor^\oc(k) =k$ is justified because we cannot control time!}  
\begin{equation}
u^\oc(k) = \fee^\oc(x^\oc(k), \hor^\oc(k)) = \fee^\oc(x^\oc(k), k)
\label{e:OptPolicyFiniteHor}
\end{equation}

\subsection{Extensions to MDPs}
\label{s:H-MDP}

The full extension of convex Q-learning will be the topic of a sequel.  We present here a few ideas on how to extend these algorithms to MDP models. 

The state space model \eqref{e:dis_NLSSx_controlled_opt} is replaced by the controlled Markov model,
\begin{equation}
X(k+1)   = \Dds(X(k),U(k),N(k+1) ) \,,\qquad k\ge 0
\label{e:NLSSdiscreteControl}
\end{equation}
where we have opted for upper-case to denote random variables, and $\bfmN$ is an i.i.d.\ sequence.    For simplicity we assume here that the state space and action space are finite, and that the disturbance $\bfmN$ also evolves on a finite set (without loss of generality, given the assumptions on $\state$ and $\ustate$).  The controlled transition matrix is denoted
\[
\begin{aligned}
P_u(x,x')  &=  \Prob\{X(k+1) =x' \mid X(k)=x\,, \ U(k) =u \} 
\\
& =  \Prob\{ \Dds( x, u,N(1) )  = x' \}   \,,\qquad\qquad x,x'\in\state\,,\  u\in\ustate
\end{aligned} 
\]
The following operator-theoretic notation is useful:  for any function $h\colon\state\to\Re$,  
\[
P_uh\, (x) =  \sum_{x'}P_u(x,x') h(x')  = \Expect[ h(X(k+1)) \mid X(k) =x\,,\ U(k)=u]
\]

An \textit{admissible policy} is a sequence of mappings to define the input:
\[
U(k) = \fee_k(X(0),\dots, X(k)) 
\]
This includes randomized policies or ``quasi-randomized polices'' of the form \eqref{e:randomizedStateFB}, since we can always extend the state process $\bfmX$ to include an exploration sequence. 

  As usual, a stationary policy is state feedback:  $U(k) = \fee(X(k)) $.
We impose the following controllability condition:  for each $x^0, x^1\in\state$, there is an admissible policy such that
\[
\Expect [\tau_{x^1}  \mid X(0) = x^0 ]  <\infty
\]
where $\tau_{x^1} $ is the first time to reach $x^1$.

  Denote for any admissible input sequence the average cost:
\begin{equation}
\eta_U(x) = \limsup_{n\to\infty} \frac{1}{n} \sum_{k=0}^{n-1} \Expect_x[c(X(k),U(k) )]  \,, \qquad x\in\state\,,
\label{e:avgCost}
\end{equation}
and let $\eta^\oc$ denote the minimum over all admissible inputs (independent of $x$ under the conditions imposed here \cite{lerlas96a,put14,ber12a,CTCN}).   
 The average cost problem is a natural analog of the total cost optimal control problem, since the dynamic programming equation for the \textit{relative value function} $h^\oc$ nearly coincides with  \eqref{e:BE_part1}:
 \[
 h^\oc(x) =\min_{u} \bigl\{ c (x,u)  +     P_u h^\oc\, (x)   \bigr\} -\eta^\oc \,, \qquad x\in\state 
 \]

The   Q-learning formulation of \cite{aboberbor01} is adopted here:
\begin{equation}
Q^\oc(x,u) = c(x,u) +   P_u \uQ^\oc(x)   - \delta \langle \nu \,, Q^\oc \rangle
\label{e:DCOE-Q}
\end{equation}
where $\nu$ is any pmf on $\state\times\ustate$ (a dirac delta function is most convenient), and $\delta>0$.    The normalization is imposed so that there is a unique solution to \eqref{e:DCOE-Q},  and this solution results in
\[
\delta \langle \nu \,, Q^\oc \rangle = \eta^\oc
\]
The relative value function is also not unique, but one solution is given by $h^\oc(x) = \min_u Q^\oc(x,u)$,  and the optimal policy is any minimizer   (recall \eqref{e:fee}).

There is then an obvious modification of the BCQL algorithm.  First, in terms of notation we replace $J^\theta$ by $h^\theta$,  and then modify the loss function as follows:
\begin{equation}
\clEBE_n(\theta)    =   \frac{1}{r_n} \sum_{k=T_n}^{T_{n+1}-1}  \bigl\{    -  Q^\theta (x(k),u(k) )   - \delta \langle \nu \,, Q^\theta \rangle
 +   c(x(k),u(k))  +   h^{\theta} _{k+1 \mid k}    \bigr\}^2
\label{e:ConvexQalmostSAzMarkov}
\end{equation}
where $ h^{\theta} _{k+1 \mid k} = \Expect[ h^\theta(X(k+1)) \mid X(k), U(k)]  $.   
A challenge in the Markovian setting is the approximation of the conditional expectation.  We propose three options:

\paragraph{1.  Direct computation}
   If we have access to a model, then this is obtained as a simple sum: 
\[
h^{\theta} _{k+1 \mid k}  = \sum_{k'}  P_u(x,x')  h^\theta(x') \,,\qquad \text{ on observing $(x,u) = (X(k), U(k) )$,}
\]

\paragraph{2.  Monte-Carlo}
If we do not have a model, or if the state space is large, then we might resort to computing the \textit{empirical pmf} to approximate the conditional expectation (this is one role of the  \textit{experience replay  buffer} encountered in the RL literature \cite{lin92a,langabrie12,osbroydanzhe17}).    
\saveForBook{\cite{lin92a} coined replay buffer: see p.~6 of review \cite{langabrie12}.
}

It may be preferable to opt for a   Galerkin relaxation, similar to what was used in several of the algorithms introduced for the deterministic model:
   choose $d_G$ functions 
 $\{ h_k :  1\le k\le d_G\}$,  and consider the finite-dimensional function class:  $\widehat\clH = \{ \sum_i \alpha_i h_i : \alpha\in\Re^{d_G} \}$ to define an estimate of the random variable $Z$:
 \begin{equation}
\widehat\Expect[ Z \mid \sigma(Y) ] =  \argmin_{h\in\widehat\clH} \Expect[ ( Z - h(Y))^2]
\label{e:haCE_loss}
\end{equation} 
As is well known, the solution is characterized by orthogonality:
\begin{lemma}
\label[lemma]{t:haCEMMSE} 
A function  $h^\circ = \sum_i \alpha_i^\circ h_i $ solves the minimum in \eqref{e:haCE_loss} if and only   if the following holds for each $1\le i\le d_G$:  
\begin{equation}
0 =  \Expect[ ( Z - h^\circ(Y)) h_i(Y)] 
\label{e:haCEMMSE}
\end{equation}
Any minimizer satisfies 
\begin{equation}
A \alpha^\circ = b
\label{e:haCEMMSEsoln}
\end{equation}
where $A$ is a $d_G\times d_G$ matrix and $b$ is a $d_G$-dimensional vector, 
with entries 
\begin{equation}
A_{i,j} = \Expect[  h_i(Y) h_j(Y) ]  \,,\qquad  b  = \Expect[   Z  h_i(Y)] 
\label{e:haCEMMSEsolnPars}
\end{equation}
Consequently, if $A$ is full rank, then $\alpha^\oc = A^{-1} b$ is the unique minimizer.
\qed
\end{lemma}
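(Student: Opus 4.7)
The plan is to treat this as a standard Hilbert-space least-squares projection, exploiting the fact that the loss is a convex quadratic in the finite-dimensional parameter $\alpha\in\Re^{d_G}$. First I would rewrite the objective in \eqref{e:haCE_loss} as
\[
L(\alpha) \eqdef \Expect\Bigl[ \bigl(Z - \textstyle\sum_i \alpha_i h_i(Y)\bigr)^2 \Bigr]
= \Expect[Z^2] - 2\alpha^\transpose b + \alpha^\transpose A\alpha,
\]
using the definitions of $A$ and $b$ in \eqref{e:haCEMMSEsolnPars}. Since $A$ is positive semidefinite (as a Gram matrix of the functions $\{h_i(Y)\}$ in $L^2$), $L$ is convex, so global minimizers are exactly the stationary points.

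Next I would compute the gradient $\nabla L(\alpha) = 2(A\alpha - b)$ and note that the first-order condition $\nabla L(\alpha^\circ) = 0$ is precisely $A\alpha^\circ = b$, establishing \eqref{e:haCEMMSEsoln}. To obtain \eqref{e:haCEMMSE}, I would observe that the $i$th component of the gradient equation reads
\[
0 = \Expect[h_i(Y)\,Z] - \sum_j \alpha_j^\circ \Expect[h_i(Y) h_j(Y)] = \Expect[ (Z - h^\circ(Y)) h_i(Y) ],
\]
which is the orthogonality condition claimed. The converse direction (orthogonality implies optimality) follows either from convexity plus the first-order condition, or directly by the Pythagoras-style expansion $\Expect[(Z - h(Y))^2] = \Expect[(Z - h^\circ(Y))^2] + \Expect[(h^\circ(Y) - h(Y))^2]$ for any $h\in \widehat\clH$, which uses only \eqref{e:haCEMMSE} applied to the difference $h^\circ - h$ as a test function in $\widehat\clH$.

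Finally, uniqueness under full rank of $A$ is immediate: $A\alpha^\circ = b$ has the unique solution $\alpha^\circ = A^{-1}b$. No step is really an obstacle here; the only mild subtlety is making clear that minimizers are characterized by the orthogonality condition \emph{without} an invertibility hypothesis on $A$, so I would present \eqref{e:haCEMMSE} and \eqref{e:haCEMMSEsoln} as equivalent conditions on $\alpha^\circ$, and then invoke invertibility only to conclude uniqueness of the minimizing coefficient vector (noting that the minimizing function $h^\circ$ is unique in $L^2$ regardless, by strict convexity of $L$ on the quotient by $\ker A$).
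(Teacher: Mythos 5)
Your proof is correct, and it is the standard least-squares argument: expand the quadratic, identify the Gram matrix, read off the normal equations, and use convexity (or the Pythagorean identity) for the converse. The paper itself supplies no proof of this lemma --- it is introduced with ``As is well known, the solution is characterized by orthogonality'' and the statement ends with a \textsf{qed} mark --- so there is nothing to compare against; your derivation is precisely the canonical one the authors are implicitly invoking, and your closing remark that the minimizing \emph{function} $h^\circ(Y)$ is unique in $L^2$ even when $A$ is singular is a correct refinement the paper does not state.
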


\paragraph{3.  Pretend the world is deterministic.}   This means we abandon the conditional expectation, and instead define a loss function similar to the deterministic setting:  
  \begin{equation}
\clEBEvar_n(\theta)    =   \frac{1}{r_n} \sum_{k=T_n}^{T_{n+1}-1}  \bigl[    -  Q^\theta (x(k),u(k) )   - \delta \langle \nu \,, Q^\theta \rangle
 +   c(x(k),u(k))  +   h^{\theta}(X(k+1) )   \bigr]^2
\label{e:ConvexQalmostSAzMarkovVar}
\end{equation}
where the inclusion of ``var'' in this new notation is explained in \Cref{t:NoisyLoss} that follows.  
 Denote the steady state loss functions
\begin{equation}
\begin{aligned} 
\barclEBE(\theta)   & =   \Expect_\varpi  \bigl[    -  Q^\theta (x(k),u(k) )   - \delta \langle \nu \,, Q^\theta \rangle
 +   c(x(k),u(k))  +   h^{\theta} _{k+1 \mid k}   \bigr\}^2  \bigr]
 \\
 \barclEBEvar(\theta)   & =   \Expect_\varpi  \bigl[    -  Q^\theta (x(k),u(k) )   - \delta \langle \nu \,, Q^\theta \rangle
 +   c(x(k),u(k))  +  h^{\theta}(X(k+1) )  \bigr\}^2  \bigr]
 \end{aligned} 
\label{e:barConvexDQNzMarkov}
\end{equation}

\begin{proposition}
\label[proposition]{t:NoisyLoss}  Given any stationary policy,  and any steady-state distribution $\varpi$ for $(X(k),U(k))$,
  the respective means of the loss function are related as follows:
\[
\begin{aligned} 
 \barclEBEvar(\theta) & = 
\barclEBE(\theta)  +   \sigma^2_{k+1\mid k} (\theta)
\\
\textit{where}\quad
 \sigma^2_{k+1\mid k} (\theta) &=  \Expect_{\varpi} \bigl[  \bigl(  h^\theta(X(k+1))   - h^{\theta} _{k+1 \mid k}  \bigr)^2 \bigr]
\end{aligned} 
\]
\qed
\end{proposition}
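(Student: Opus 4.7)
The plan is to recognize this as a standard conditional-variance (bias-variance) decomposition applied pointwise to the two loss functions. The core identity is that the conditional expectation $\Expect[\,\cdot\mid X(k),U(k)]$ is the $L^2$-projection onto $\sigma(X(k),U(k))$, so replacing $h^\theta_{k+1\mid k}$ with $h^\theta(X(k+1))$ inside the squared residual adds exactly the conditional variance of $h^\theta(X(k+1))$ and nothing else.

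To execute, I would first introduce compact notation. Let
\[
A(\theta) \eqdef -Q^\theta(X(k),U(k)) - \delta \langle \nu, Q^\theta\rangle + c(X(k),U(k)),
\qquad
B(\theta) \eqdef h^\theta(X(k+1)),
\]
and write $\bar B(\theta) \eqdef \Expect_\varpi[B(\theta)\mid X(k),U(k)] = h^\theta_{k+1\mid k}$. Observe that $A(\theta)$ is $\sigma(X(k),U(k))$-measurable, so
\[
\barclEBE(\theta) = \Expect_\varpi[(A(\theta)+\bar B(\theta))^2],
\qquad
\barclEBEvar(\theta) = \Expect_\varpi[(A(\theta)+B(\theta))^2].
\]

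Next I would carry out the algebraic step. Write $B(\theta) = \bar B(\theta) + D(\theta)$ with $D(\theta) \eqdef B(\theta)-\bar B(\theta)$, so that
\[
(A(\theta)+B(\theta))^2 = (A(\theta)+\bar B(\theta))^2 + 2(A(\theta)+\bar B(\theta))D(\theta) + D(\theta)^2.
\]
Conditioning on $(X(k),U(k))$ and using $\Expect_\varpi[D(\theta)\mid X(k),U(k)] = 0$ together with the measurability of $A(\theta)+\bar B(\theta)$ shows that the cross term vanishes in conditional expectation. Taking total expectation under $\varpi$ and using the tower property yields
\[
\barclEBEvar(\theta) = \barclEBE(\theta) + \Expect_\varpi[D(\theta)^2] = \barclEBE(\theta) + \sigma^2_{k+1\mid k}(\theta),
\]
which is the claimed identity.

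The only substantive concern is verifying that the stationary expectation $\Expect_\varpi$ is compatible with the conditional expectation on a single transition, i.e.\ that under a stationary policy the joint distribution of $(X(k),U(k),X(k+1))$ is the product of $\varpi$ with the controlled kernel $P_{U(k)}(X(k),\cdot\,)$. This is immediate from the Markov property of the MDP \eqref{e:NLSSdiscreteControl} once a stationary policy is fixed, and no extra regularity is needed beyond finiteness of the relevant second moments (which holds automatically since the state, action, and disturbance spaces are finite). There is no real obstacle; the result is essentially a one-line orthogonality argument dressed in the notation of the loss functions.
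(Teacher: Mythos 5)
Your proof is correct: the conditional-variance decomposition with the cross term killed by the tower property and the $\sigma(X(k),U(k))$-measurability of the remaining terms is exactly the right argument, and your remark that $\varpi$ composed with the controlled kernel gives the joint law of $(X(k),U(k),X(k+1))$ under a stationary policy covers the only point that needs checking. The paper in fact states \Cref{t:NoisyLoss} with no written proof (it is flagged as immediate), so your one-line orthogonality argument is precisely the intended justification.
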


Consequently, if the variance of $X(k+1) - X(k)$ is not   significant, then the two objective functions  $\clEBEvar_n $ and $ 
\clEBE_n  $ are not very different.   In applications to robotics it is surely best to use the simpler loss function $\clEBEvar_n(\theta) $, while this may not be advisable in financial applications.

\section{Example} 
\label{s:ex}



A simple example is illustrated in \Cref{f:MountainCar}, in which the two dimensional state space is   position and velocity:
\[
x(t)\in \state = [z^{\text{min}},  z^{\text{goal}}   ]\times [-\barv, \barv]  
\]
Where $z^{\text{min}}$ is a lower limit for $x(t)$,   and the target state is $z^{\text{goal}}$.  The velocity is bounded in magnitude by $\barv>0$.
The input $u$ is the throttle position (which is negative when the car is in reverse).   This example was  introduced in the dissertation \cite{moo90},  and has since become a favorite basic example  in the RL literature \cite{sutbar18}. 

\saveForBook{
 Within the RL literature, this example was   introduced in the dissertation \cite{moo90},  and has since become a favorite basic example \cite{sinsut96,sutbar18}.
}

  \begin{wrapfigure}[11]{R}{0.25\textwidth}
  \smallskip
\centering 
    \includegraphics[width=1\hsize]{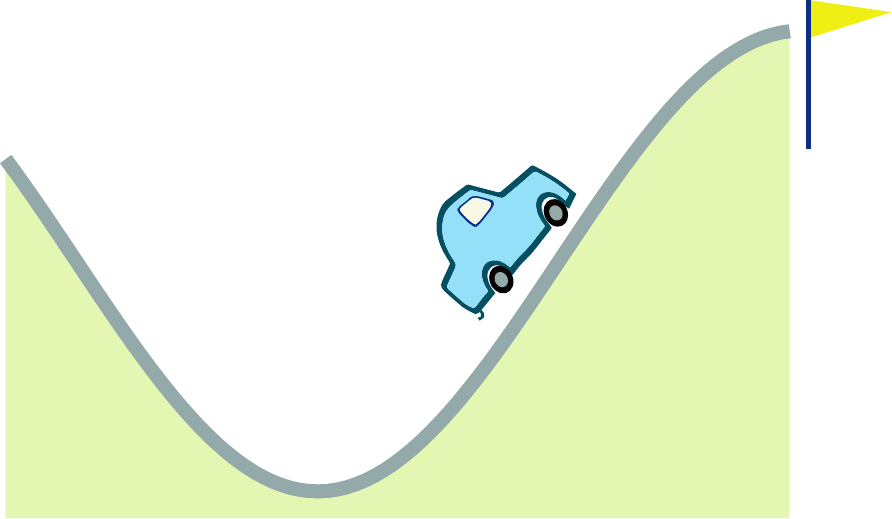}
  \caption{Mountain Car}
\label{f:MountainCar}
  \vspace{-1em}
\end{wrapfigure}

Due to state and input constraints,  a feasible policy will sometimes put the car in reverse, and travel at maximal speed away from the goal to reach a higher elevation to the left.  Several cycles back and forth may be required to reach the goal.   It is a good example to test the theory because the value function is not very smooth.

\saveForBook{
What makes this problem in interesting is that the engine is so weak, that it is impossible to reach the hill from most initial conditions:   it is sensible to hit the throttle, and head towards the goal at maximum speed.  In most cases, the car will stall before reaching the goal.   A feasible policy will put the car in reverse, and travel at maximal speed away from the goal to reach a higher elevation to the left.  Several cycles back and forth may be required to reach the goal. 
}

  \begin{wrapfigure}[11]{L}{0.25\textwidth}
  \smallskip
\centering 
    \includegraphics[width=1\hsize]{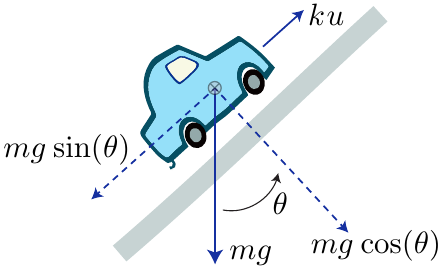}
  \caption{Two forces on the Mountain Car}
\label{f:MountainCarPhysics}
  \vspace{-1em}
\end{wrapfigure}

A continuous-time model can be constructed based on the two forces on the on the car shown in \Cref{f:MountainCarPhysics}.    With $ z^{\text{goal}}  - z(t)$ the distance along the road to the goal we obtain
\[
m a = - mg \sin(\theta) + k u
\]
where $a = \frac{d^2}{dt^2} x$,   and $\theta > 0$ for the special case shown in \Cref{f:MountainCarPhysics}.
This can be written in state space form
\[
\begin{aligned}
\ddt x_1   &=  x_2 
   \\
\ddt x_2            & =  \frac{k}{m} u  -  g \sin(\theta(x_1)) 
\end{aligned} 
\]
where $\theta(x_1)$ is the road grade when at position $x_1$.

\saveForBook{This state space model leaves much out of the navigation problem---namely, wind, traffic, and pot-holes.  
Also missing are hard constraints on position and velocity assumed at the start.   
}

A discrete time model is adopted in   \cite[Ch.~10]{sutbar18} of the form: 
\notes{May be clearer in most recent version of the book}
\begin{subequations}
\begin{equation}  
\begin{aligned}
x_1(k+1)  &=   \llbracket  x_1(t) + x_2(t)     \rrbracket_1
   \\
x_2(k+1)          & =  \llbracket  x_2(k)   + 10^{-3}  u(k)  -  2.5\times10^{-3}  \cos(3 x_1(k))  \rrbracket_2
\end{aligned} 
\end{equation}%
which corresponds to 
$\theta(x_1) = \pi + 3 x_1$.  The brackets are projecting the values of $x_1(k+1) $ to the interval $ [z^{\text{min}},  z^{\text{goal}}   ] = [-1.2, 0.5]$,
and   $x_1(k+1) $ to the interval $ [-\barv, \barv]  $.   We adopt the values used in   \cite{sutbar18}:
\begin{equation}
\text{$z^{\text{min}} = -1.2$,  \ $ z^{\text{goal}} = 0.5$, \  and \  $\barv = 7\times 10^{-2}$.
}
\label{e:MountainCarPars}
\end{equation}
\end{subequations}

The control objective is to reach the goal in minimal time, but this can also be cast  as a total cost optimal control problem.   Let $x^e = ( z^{\text{goal}} , 0)^\transpose  $,   and reduce the state space so that $x^e$ is the only state $x=  ( z,v)^\transpose \in\state$  satisfying $z = z^{\text{goal}}$.  This is justified because the car parks on reaching the goal.
Let $c(x,u) = 1$ for all $x,u$ with $x\neq  x^e $,   and $c( x^e ,u) \equiv 0$.      

 The optimal total cost \eqref{e:Jstar} is finite for each initial condition, and the Bellman equation \eqref{e:BE_part1} becomes
\[
J^\oc (x) =  1 + \min_{u} \bigl\{     J^\oc ( \Dds(x,u) ) \bigr\}  \,,\quad x_1<   z^{\text{goal}} 
\]
with the usual boundary constraint $ J^\oc ( x^e) =0$.

\begin{figure}[h] 
\centering 
    \includegraphics[width=.8\hsize]{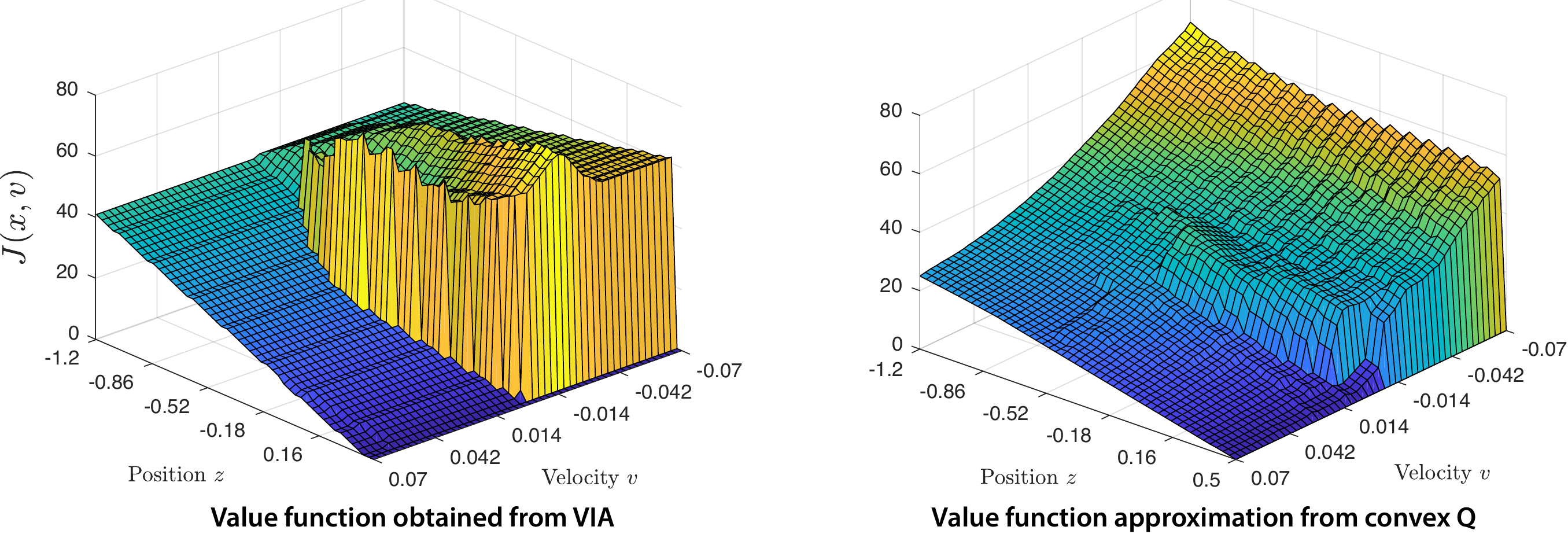}
  \caption{Value function and its approximation using a version of CQL}
\label{f:MountainCarFirstGo} 
\end{figure}

Experiments with this example are a work in progress.
\Cref{f:MountainCarFirstGo} shows results from one experiment using a parameterization of the form \eqref{e:AdvantagePar},
in which each $ \psi_i^J(x)  $ was obtained via binning:
\[
\psi_i^J(x)  = \ind\{ x \in B_i \}
\]
where the finite collection of sets is disjoint,  with $\cup B_i = \{ x\in\Re^2 :  x_1<0.5 \}$.    In this experiment they were chosen to be rectangular,   with 800 in total. 
The union was chosen to equal a half space because of the following choice for the advantage basis.   A vector $x^\Delta\in\Re^2_+$ was chosen, equal to the center of the unique bin in the positive quadrant containing the origin as one vertex.   
We then defined
\begin{alignat*}{6} 
 \psi_i^A(x,u)  &=   \ind\{u=1\, , x<0.5 \} \psi_i^J(x+ x^\Delta)  && \text{ for all $x,u$, and all $ d^J+1 \le i \le 2d^J $}
\\
\psi_i^A(x,u)  &= \ind\{u=-1\, , x<0.5 \}   \psi^j_i(x- x^\Delta)   \quad  &&\text{ for all $x,u$, and all $ 2d^J+1 \le i \le 3d^J $}
\end{alignat*}

Every algorithm failed with the basis with $x^\Delta = \Zero$ (no shift).   The explanation comes from the fact that the state moves very slowly,   which means that $x(k) $ and $x(k+1)$ often lie in the same bin.   For such $k$ we have $ J^{\theta}(x(k+1))  =   J^{\theta}(x(k))$, which is bad news:   it is easy to find a vector $\theta^0$ satisfying $Q^{\theta^0}(x,u) = 1 + J^{\theta^0}(x)$ when $z<0.5$  (for example, take $ J^{\theta}\equiv \text{const.}$ and  $Q^{\theta^0}\equiv 1+ \text{const.} $).    Hence for these bad values of $k$,
\[
Q^{\theta^0}(x(k),u(k)) = 1 + J^{\theta^0}(x(k)) =  c(x(k),u(k)) +  J^{\theta^0}(x(k+1))
\]
That is, \textit{the observed  Bellman error is precisely zero!}    Both convex Q and DQN will likely return $\theta^*\approx \theta^0$ without an enormous number of bins.   The introduction of the shift resolved this problem.   
 
Better results were obtained with an extension of this basis to include    quadratics, defined so that the span of $\psi^J_i$ includes
all quadratics satisfying $q(z,v) =0$ whenever $z=0.5$.   To achieve this, we  simply merged four bins adjacent to $z\equiv 0.5$,  and nearest to $v=0.07$,   and replaced three basis vectors with quadratics  (chosen to be non-negative on the state space).

 The plot on the left hand side of \Cref{f:MountainCarFirstGo} was obtained using value iteration for an approximate model,  in which the state space was discretized with $z$-values equally spaced at values $k\times s_z$;  
 $v$-values equally spaced at values $k\times sv$,  with   $s_z=0.041$ and $s_v=0.001$.\footnote{Many thanks to Fan Lu at UF for conducting this experiment}
 The approximation shown on the right was obtained using Convex Q-Learning \eqref{e:ConvexBE_relax} with slight modifications:  First,  the positivity penalties and constraints were relaxed since $Q^\theta\ge J^\theta$ was imposed via constraints on $\theta$.  Second,  \eqref{e:ConvexBEb_relax} was relaxed to the  inequality constraints \eqref{e:zBErelax} with $\Tol=0$,  resulting in 
 \begin{align*} 
\theta^\ocp = \argmin_\theta &
   \bigl\{   -   \langle \mu,J^\theta \rangle      +   \kappaBE  \clEBE(\theta)   \bigr\}     &&
\\
 \qquad \st \ \       &   \zBE(\theta)   \ge \Zero && 
 \\
 & \theta_i \ge 0\,,\qquad i\ge d_J
\end{align*} 
Finally, the Galerkin relaxation was essentially abandoned via
 \[
    \elig_k (i)    =     \ind\{ x(k) = x^i  ,  u(k) = u^i\}   \,,  \qquad 1\le i\le \Nsam\,,
 \]
where $\Nsam =10^4$ is the run length, and $\{  x^i , u^i: 1\le i\le N\} $ represents all values observed!   This means we are enforcing $ \Tdiff^\circ_{k+1}(\theta^*)  \ge 0$ for all $k$ over the run.  This is a very complex approach:  with $10^4$ observations we have the same number of constraints.    Regardless, the {\tt quadprog} command in Matlab returned the answer in one hour 
on a 2018 MacBook Pro with 16GB of ram.
\notes{used $10^6$ samples for quadratic objective, but no need to get into this here.  Time to increase basis dimension, and improve constraint satisfaction, perhaps with binning}

%
%

\section{Conclusions} 
\label{s:conc}

The LP and QP characterization of dynamic programming equations gives rise to RL algorithms that provably convergent, and for which we know what problem we are actually solving.   Much more work is required to develop these algorithms for particular applications, and to improve efficiency through a combination of algorithm design and techniques from optimization theory.     

An intriguing open question regards algorithms for MDP models.    In this setting, a minor variant of the linear program \eqref{e:ConvexBEJ} is the \textit{dual} of Manne's LP \cite{araborferghomar93}.   The primal is expressed:
\begin{equation}
\begin{aligned}
\min \quad  &   \Expect_\varpi[ c( X,U) ] 
  \\
\st \quad            & \varpi \in\clP
\end{aligned} 
\label{e:MannePrimal}
\end{equation}
where $\clP$ is a convex subset of probability measures on $\state\times\ustate$.    Characterization of $\clP$ is most easily described when  $\state\times\ustate$ is discrete.    
In this case, for any pmf on the product space we can write via Baye's rule
\[
\varpi(x,u) = \pi(x) \fee(u \mid x)
\]
where $\pi$ is the first marginal, and $ \fee$ is   interpreted as a randomized policy.  We say that $\varpi \in\clP$  if $\pi$ is a steady-state distribution for $\bfmX$ when controlled using $\fee$.   

Actor-critic algorithms are designed to optimize average cost over a family of randomized policies $\{ \fee^\theta \}$.   The mapping from a randomized policy $ \fee^\theta$ to a bivariate pmf $\varpi^\theta \in\clP$ is highly nonlinear, which means   the valuable convexity of the primal \eqref{e:MannePrimal} is abandoned.    We hope to devise techniques to construct a convex family of pmfs $\{ \varpi^\theta \} \subset \clP$ that generate policies which   approximate the randomized polices of interest.   We then arrive at a convex program that approximates \eqref{e:MannePrimal}, and from its optimizer obtain a policy that achieves the optimal average cost:
\[
\fee^{\theta^\ocp} (u\mid x) =   \frac{1}{\pi^{\theta^\ocp} (x) }  \varpi^{\theta^\ocp} (x,u)\,,\qquad  \textit{where}\quad   \pi^{\theta^\ocp} (x) =\sum_{u'}   \varpi^{\theta^\ocp} (x,u')
\]
Existing actor-critic theory might be extended to obtain RL algorithms to estimate $\theta^\ocp$.

\bibliographystyle{abbrv}  

\begin{thebibliography}{10}

\bibitem{aboberbor01}
J.~Abounadi, D.~Bertsekas, and V.~S. Borkar.
\newblock Learning algorithms for {Markov} decision processes with average
  cost.
\newblock {\em SIAM Journal on Control and Optimization}, 40(3):681--698, 2001.

\bibitem{andmoo90}
B.~D.~O. Anderson and J.~B. Moore.
\newblock {\em {Optimal Control: Linear Quadratic Methods}}.
\newblock Prentice-Hall, Englewood Cliffs, NJ, 1990.

\bibitem{ansbarshi17}
O.~Anschel, N.~Baram, and N.~Shimkin.
\newblock Averaged-{DQN}: Variance reduction and stabilization for deep
  reinforcement learning.
\newblock In {\em Proc. of the 34th International Conference on Machine
  Learning - Volume 70}, ICML'17, pages 176--185. JMLR.org, 2017.

\bibitem{araborferghomar93}
A.~Arapostathis, V.~S. Borkar, E.~Fernandez-Gaucherand, M.~K. Ghosh, and S.~I.
  Marcus.
\newblock Discrete-time controlled {Markov} processes with average cost
  criterion: a survey.
\newblock {\em SIAM J. Control Optim.}, 31:282--344, 1993.

\bibitem{bec17}
J.~Beck.
\newblock {\em Strong Uniformity and Large Dynamical Systems}.
\newblock World Scientific, 2017.

\bibitem{berchecoldalmehmey19b}
A.~{Bernstein}, Y.~{Chen}, M.~{Colombino}, E.~{Dall'Anese}, P.~{Mehta}, and
  S.~{Meyn}.
\newblock Quasi-stochastic approximation and off-policy reinforcement learning.
\newblock In {\em Proc. of the IEEE Conf. on Dec. and Control}, pages
  5244--5251, Mar 2019.

\bibitem{bertsi96a}
D.~Bertsekas and J.~N. Tsitsiklis.
\newblock {\em Neuro-Dynamic Programming}.
\newblock Atena Scientific, Cambridge, Mass, 1996.

\bibitem{ber12a}
D.~P. Bertsekas.
\newblock {\em Dynamic Programming and Optimal Control}, volume~2.
\newblock Athena Scientific, 4th edition, 2012.

\bibitem{ber19a}
D.~P. Bertsekas.
\newblock {\em Reinforcement learning and optimal control}.
\newblock Athena Scientific, 2019.

\bibitem{bis06}
C.~M. Bishop.
\newblock {\em Pattern recognition and machine learning}.
\newblock Springer, 2006.

\bibitem{bor02a}
V.~S. Borkar.
\newblock Convex analytic methods in {M}arkov decision processes.
\newblock In {\em Handbook of {Markov} decision processes}, volume~40 of {\em
  Internat. Ser. Oper. Res. Management Sci.}, pages 347--375. Kluwer Acad.
  Publ., Boston, MA, 2002.

\bibitem{bor20a}
V.~S. Borkar.
\newblock {\em Stochastic Approximation: A Dynamical Systems Viewpoint (2nd
  ed., to appear)}.
\newblock {Hindustan Book Agency}, {Delhi, India and Cambridge, UK}, 2020.

\bibitem{borgai19}
V.~S. Borkar and V.~Gaitsgory.
\newblock Linear programming formulation of long-run average optimal control
  problem.
\newblock {\em Journal of Optimization Theory and Applications},
  181(1):101--125, 2019.

\bibitem{borgaishv19}
V.~S. Borkar, V.~Gaitsgory, and I.~Shvartsman.
\newblock {LP} formulations of discrete time long-run average optimal control
  problems: The non ergodic case.
\newblock {\em SIAM Journal on Control and Optimization}, 57(3):1783--1817,
  2019.

\bibitem{bormey00a}
V.~S. Borkar and S.~P. Meyn.
\newblock The {ODE} method for convergence of stochastic approximation and
  reinforcement learning.
\newblock {\em SIAM J. Control Optim.}, 38(2):447--469, 2000.
\newblock (see also {\it IEEE CDC}, 1998).

\bibitem{boyelgferbal94}
S.~Boyd, L.~El~Ghaoui, E.~Feron, and V.~Balakrishnan.
\newblock {\em Linear Matrix Inequalities in System and Control Theory},
  volume~15.
\newblock SIAM, 1994.

\bibitem{bro90}
W.~L. Brogan.
\newblock {\em Modern control theory}.
\newblock Pearson, 3rd edition, 1990.

\bibitem{chedevbusmey19}
S.~{Chen}, A.~M. {Devraj}, A.~{Bu{\v{s}}i{\'c}}, and S.~{Meyn}.
\newblock {Zap Q Learning} with nonlinear function approximation.
\newblock Submitted for publication and {arXiv e-prints 1910.05405}, 2019.

\bibitem{cheberdevmey19}
Y.~{Chen}, A.~{Bernstein}, A.~{Devraj}, and S.~{Meyn}.
\newblock {Model-Free Primal-Dual Methods for Network Optimization with
  Application to Real-Time Optimal Power Flow}.
\newblock In {\em {American Control Conference and arXiv}}, page
  arXiv:1909.13132, Sept. 2019.

\bibitem{farroy03a}
D.~P. de~Farias and B.~Van~Roy.
\newblock The linear programming approach to approximate dynamic programming.
\newblock {\em Operations Res.}, 51(6):850--865, 2003.

\bibitem{defvan04}
D.~P. De~Farias and B.~Van~Roy.
\newblock On constraint sampling in the linear programming approach to
  approximate dynamic programming.
\newblock {\em Mathematics of operations research}, 29(3):462--478, 2004.

\bibitem{farroy06}
D.~P. de~Farias and B.~Van~Roy.
\newblock A cost-shaping linear program for average-cost approximate dynamic
  programming with performance guarantees.
\newblock {\em Math. Oper. Res.}, 31(3):597--620, 2006.

\bibitem{der70}
C.~Derman.
\newblock {\em Finite State Markovian Decision Processes}, volume~67 of {\em
  Mathematics in Science and Engineering}.
\newblock Academic Press, Inc., 1970.

\bibitem{dev19}
A.~M. Devraj.
\newblock {\em Reinforcement Learning Design with Optimal Learning Rate}.
\newblock PhD thesis, University of Florida, 2019.

\bibitem{devbusmey20}
A.~M. Devraj, A.~Bu\v{s}i\'{c}, and S.~Meyn.
\newblock Fundamental design principles for reinforcement learning algorithms.
\newblock In {\em Handbook on Reinforcement Learning and Control}. Springer,
  2020.

\bibitem{devmey17a}
A.~M. Devraj and S.~P. Meyn.
\newblock Fastest convergence for {Q-learning}.
\newblock {\em ArXiv e-prints}, July 2017.

\bibitem{devmey17b}
A.~M. Devraj and S.~P. Meyn.
\newblock Zap {Q-learning}.
\newblock In {\em Proceedings of the 31st International Conference on Neural
  Information Processing Systems}, 2017.

\bibitem{fenliliu19}
Y.~Feng, L.~Li, and Q.~Liu.
\newblock A kernel loss for solving the {Bellman} equation.
\newblock In {\em Advances in Neural Information Processing Systems}, pages
  15456--15467, 2019.

\bibitem{gaiparshv17}
V.~Gaitsgory, A.~Parkinson, and I.~Shvartsman.
\newblock Linear programming formulations of deterministic infinite horizon
  optimal control problems in discrete time.
\newblock {\em Discrete and Continuous Dynamical Systems - Series B},
  22(10):3821 -- 3838, 2017.

\bibitem{gaiqui13}
V.~Gaitsgory and M.~Quincampoix.
\newblock On sets of occupational measures generated by a deterministic control
  system on an infinite time horizon.
\newblock {\em Nonlinear Analysis: Theory, Methods and Applications}, 88:27 --
  41, 2013.

\bibitem{gupjaigly15}
A.~{Gupta}, R.~{Jain}, and P.~W. {Glynn}.
\newblock An empirical algorithm for relative value iteration for average-cost
  {MDPs}.
\newblock In {\em IEEE Conference on Decision and Control}, pages 5079--5084,
  2015.

\bibitem{herhertak96}
D.~Hern{\'a}ndez-Hern{\'a}ndez, O.~Hern{\'a}ndez-Lerma, and M.~Taksar.
\newblock The linear programming approach to deterministic optimal control
  problems.
\newblock {\em Applicationes Mathematicae}, 24(1):17--33, 1996.

\bibitem{lerlas96a}
O.~Hern{\'a}ndez-Lerma and J.~B. Lasserre.
\newblock {\em Discrete-time {M}arkov control processes}, volume~30 of {\em
  Applications of Mathematics (New York)}.
\newblock Springer-Verlag, New York, 1996.
\newblock Basic optimality criteria.

\bibitem{herlas02a}
O.~Hern{\'a}ndez-Lerma and J.~B. Lasserre.
\newblock The linear programming approach.
\newblock In {\em Handbook of {Markov} decision processes}, volume~40 of {\em
  Internat. Ser. Oper. Res. Management Sci.}, pages 377--407. Kluwer Acad.
  Publ., Boston, MA, 2002.

\bibitem{kamsutmohlyg17}
A.~{Kamoutsi}, T.~{Sutter}, P.~{Mohajerin Esfahani}, and J.~{Lygeros}.
\newblock On infinite linear programming and the moment approach to
  deterministic infinite horizon discounted optimal control problems.
\newblock {\em IEEE Control Systems Letters}, 1(1):134--139, July 2017.

\bibitem{karbha16}
P.~{Karmakar} and S.~{Bhatnagar}.
\newblock Dynamics of stochastic approximation with iterate-dependent {Markov}
  noise under verifiable conditions in compact state space with the stability
  of iterates not ensured.
\newblock {\em arXiv e-prints}, page arXiv:1601.02217, Jan 2016.

\bibitem{kuinie12}
L.~Kuipers and H.~Niederreiter.
\newblock {\em Uniform distribution of sequences}.
\newblock Courier Corporation, 2012.

\bibitem{langabrie12}
S.~Lange, T.~Gabel, and M.~Riedmiller.
\newblock Batch reinforcement learning.
\newblock In {\em Reinforcement learning}, pages 45--73. Springer, 2012.

\bibitem{las10}
J.-B. Lasserre.
\newblock {\em Moments, positive polynomials and their applications}, volume~1.
\newblock World Scientific, 2010.

\bibitem{leehe19}
D.~Lee and N.~He.
\newblock A unified switching system perspective and {ODE} analysis of
  {Q}-learning algorithms.
\newblock {\em arXiv}, page arXiv:1912.02270, 2019.

\bibitem{lin92a}
L.-J. Lin.
\newblock Self-improving reactive agents based on reinforcement learning,
  planning and teaching.
\newblock {\em Machine learning}, 8(3-4):293--321, 1992.

\bibitem{maeszebhasut10}
H.~R. Maei, C.~Szepesv\'{a}ri, S.~Bhatnagar, and R.~S. Sutton.
\newblock Toward off-policy learning control with function approximation.
\newblock In {\em Proceedings of the 27th International Conference on
  International Conference on Machine Learning}, ICML'10, pages 719--726, USA,
  2010. Omnipress.

\bibitem{man60a}
A.~S. Manne.
\newblock Linear programming and sequential decisions.
\newblock {\em Management Sci.}, 6(3):259--267, 1960.

\bibitem{mehmey09a}
P.~G. Mehta and S.~P. Meyn.
\newblock Q-learning and {Pontryagin's} minimum principle.
\newblock In {\em Proc. of the IEEE Conf. on Dec. and Control}, pages
  3598--3605, Dec. 2009.

\bibitem{melmeyrib08}
F.~S. Melo, S.~P. Meyn, and M.~I. Ribeiro.
\newblock An analysis of reinforcement learning with function approximation.
\newblock In {\em {ICML '08: Proceedings of the 25th international conference
  on Machine learning}}, pages 664--671, New York, NY, USA, 2008. ACM.

\bibitem{CTCN}
S.~P. Meyn.
\newblock {\em Control Techniques for Complex Networks}.
\newblock Cambridge University Press, 2007.
\newblock Pre-publication edition available online.

\bibitem{MT}
S.~P. Meyn and R.~L. Tweedie.
\newblock {\em Markov chains and stochastic stability}.
\newblock Cambridge University Press, Cambridge, second edition, 2009.
\newblock Published in the Cambridge Mathematical Library. 1993 edition online.

\bibitem{DQN16Asynchronous}
V.~Mnih, A.~P. Badia, M.~Mirza, A.~Graves, T.~P. Lillicrap, T.~Harley,
  D.~Silver, and K.~Kavukcuoglu.
\newblock Asynchronous methods for deep reinforcement learning.
\newblock {\em CoRR}, abs/1602.01783, 2016.

\bibitem{mnikavsilgraantwierie13}
V.~Mnih, K.~Kavukcuoglu, D.~Silver, A.~Graves, I.~Antonoglou, D.~Wierstra, and
  M.~A. Riedmiller.
\newblock Playing {Atari} with deep reinforcement learning.
\newblock {\em ArXiv}, abs/1312.5602, 2013.

\bibitem{DQN15}
V.~Mnih, K.~Kavukcuoglu, D.~Silver, A.~A. Rusu, J.~Veness, M.~G. Bellemare,
  A.~Graves, M.~A. Riedmiller, A.~K. Fidjeland, G.~Ostrovski, S.~Petersen,
  C.~Beattie, A.~Sadik, I.~Antonoglou, H.~King, D.~Kumaran, D.~Wierstra,
  S.~Legg, and D.~Hassabis.
\newblock Human-level control through deep reinforcement learning.
\newblock {\em Nature}, 518:529--533, 2015.

\bibitem{moo90}
A.~W. Moore.
\newblock {\em Efficient memory-based learning for robot control}.
\newblock PhD thesis, University of Cambridge, Computer Laboratory, 1990.

\bibitem{glyorm02b}
D.~{Ormoneit} and P.~{Glynn}.
\newblock Kernel-based reinforcement learning in average-cost problems.
\newblock {\em IEEE Transactions on Automatic Control}, 47(10):1624--1636, Oct
  2002.

\bibitem{osbroydanzhe17}
I.~Osband, B.~Van~Roy, D.~Russo, and Z.~Wen.
\newblock Deep exploration via randomized value functions.
\newblock {\em arXiv preprint arXiv:1703.07608}, 2017.

\bibitem{osbvanwen16}
I.~Osband, B.~Van~Roy, and Z.~Wen.
\newblock Generalization and exploration via randomized value functions.
\newblock In {\em International Conference on Machine Learning}, pages
  2377--2386, 2016.

\bibitem{put14}
M.~L. Puterman.
\newblock {\em Markov decision processes: discrete stochastic dynamic
  programming}.
\newblock John Wiley \&\ Sons, 2014.

\bibitem{rambha17}
A.~Ramaswamy and S.~Bhatnagar.
\newblock A generalization of the {Borkar-Meyn Theorem} for stochastic
  recursive inclusions.
\newblock {\em Mathematics of Operations Research}, 42(3):648--661, 2017.

\bibitem{rambha18}
A.~{Ramaswamy} and S.~{Bhatnagar}.
\newblock Stability of stochastic approximations with `controlled {Markov}'
  noise and temporal difference learning.
\newblock {\em IEEE Transactions on Automatic Control}, pages 1--1, 2018.

\bibitem{shajaigup19}
H.~Sharma, R.~Jain, and A.~Gupta.
\newblock An empirical relative value learning algorithm for non-parametric
  {MDPs} with continuous state space.
\newblock In {\em European Control Conference}, pages 1368--1373. IEEE, 2019.

\bibitem{shaman20}
S.~D.-C. Shashua and S.~Mannor.
\newblock {Kalman meets Bellman}: Improving policy evaluation through value
  tracking.
\newblock {\em arXiv preprint arXiv:2002.07171}, 2020.

\bibitem{shimey11}
S.~Shirodkar and S.~Meyn.
\newblock Quasi stochastic approximation.
\newblock In {\em Proc. of the 2011 American Control Conference (ACC)}, pages
  2429--2435, July 2011.

\bibitem{sutbar18}
R.~Sutton and A.~Barto.
\newblock {\em Reinforcement Learning: An Introduction}.
\newblock {MIT Press}. {On-line edition at
  \url{http://www.cs.ualberta.ca/~sutton/book/the-book.html}}, Cambridge, MA,
  2nd edition, 2018.

\bibitem{sut84}
R.~S. Sutton.
\newblock {\em Temporal Credit Assignment in Reinforcement Learning}.
\newblock PhD thesis, University of Massachusetts, Amherst, 1984.

\bibitem{vanboy99}
L.~Vandenberghe and S.~Boyd.
\newblock Applications of semidefinite programming.
\newblock {\em Applied Numerical Mathematics}, 29(3):283 -- 299, 1999.
\newblock Proceedings of the Stieltjes Workshop on High Performance
  Optimization Techniques.

\bibitem{vin93}
R.~Vinter.
\newblock Convex duality and nonlinear optimal control.
\newblock {\em SIAM Journal on Control and Optimization}, 31(2):518--21, 03
  1993.

\bibitem{wanboy09}
Y.~Wang and S.~Boyd.
\newblock Performance bounds for linear stochastic control.
\newblock {\em Systems Control Lett.}, 58(3):178--182, 2009.

\end{thebibliography}
  
\def\cprime{$'$}\def\cprime{$'$}

  
  \clearpage
   
    \addcontentsline{toc}{section}{Appendices}

  \centerline{\LARGE \bf Appendices}

  \appendix
  
  \section{Convex programs for value functions}

\begin{lemma}
\label[lemma]{t:preLPforJQ}
Suppose that  the assumptions of  \Cref{t:LPforJQ} hold:  the value function $J^\oc$ defined in \eqref{e:Jstar} is finite-valued, inf-compact, and vanishes only at $x^e$.  Then, for each $x\in\state$ and stationary policy $\fee$ for which $J^\fee(x) <\infty$,
\[
 \lim_{k\to\infty}  J^\fee( x(k)) =0\qquad   \lim_{k\to\infty}    x(k)  =x^e  
\]
\end{lemma}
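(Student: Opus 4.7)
My plan is to exploit the identity $J^\fee(x(n)) = \sum_{k=n}^\infty c(x(k),\fee(x(k)))$, which holds because $\fee$ is stationary and the trajectory is deterministic, so the cost-to-go from step $n$ onward is exactly the value function evaluated at $x(n)$. The assumption $J^\fee(x) < \infty$ then immediately gives convergence of the series, so its tail $J^\fee(x(n))$ tends to zero. This handles the first claim.

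For the second claim, I would use the optimality bound $0 \le J^\oc \le J^\fee$ (which follows from the definition of $J^\oc$ as the minimum over all admissible input sequences, and $\fee$ produces one such sequence). This yields $J^\oc(x(k)) \to 0$. Then I invoke inf-compactness of $J^\oc$: fix any small $\epsilon > 0$; the sublevel set $S_{J^\oc}(\epsilon)$ cannot equal $\state$ (otherwise $J^\oc \le \epsilon$ for all $\epsilon$, i.e.\ $J^\oc \equiv 0$, contradicting the assumption that $J^\oc$ vanishes only at $x^e$, unless $\state = \{x^e\}$, which is a trivial case). Hence $S_{J^\oc}(\epsilon)$ is pre-compact, and eventually $x(k) \in S_{J^\oc}(\epsilon)$, so the tail of the trajectory lies in a compact set.

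To pin the limit down as $x^e$, I would argue by contradiction: if $x(k) \not\to x^e$, there is a subsequence $\{x(k_j)\}$ bounded away from $x^e$. Since this subsequence is eventually inside the pre-compact set $S_{J^\oc}(\epsilon)$, it has a further convergent sub-subsequence $x(k_{j_i}) \to x^\star$ with $x^\star \ne x^e$. Continuity of $J^\oc$ (inherited from the standing hypothesis of \Cref{t:LPforJQ}) combined with $J^\oc(x(k_{j_i})) \to 0$ forces $J^\oc(x^\star) = 0$, and the assumption that $J^\oc$ vanishes only at $x^e$ gives $x^\star = x^e$, a contradiction.

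The main obstacle I anticipate is ruling out the degenerate case $S_{J^\oc}(r) = \state$ in the inf-compact definition; this requires the continuity and uniqueness-of-zero assumptions to be used explicitly, and care that the argument is not circular. Everything else is essentially a tail-of-convergent-series argument dressed up with a compactness extraction.
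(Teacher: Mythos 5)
Your proposal is correct and follows essentially the same route as the paper: the first limit comes from the stationary-policy dynamic programming identity (your tail-of-a-convergent-series phrasing is just the paper's telescoped partial-sum identity in the limit), and the second from $J^\oc \le J^\fee$ together with inf-compactness, continuity, and the uniqueness of the zero of $J^\oc$. You merely spell out the subsequence-extraction and the exclusion of the degenerate case $S_{J^\oc}(r)=\state$, which the paper's proof asserts without detail.
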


\begin{proof}
With $u(k) = \fee(x(k))$ for each $k$, we have the simple dynamic programming equation:  
\[
J^\fee(x)  = \sum_{k=0}^{N-1}   c (x(k), u(k) )   +  J^\fee( x(N))
\]
On taking the limit as $N\to\infty$ we obtain
\[
J^\fee(x)  = \sum_{k=0}^\infty   c (x(k), \fee(x(k))  )   +  \lim_{N\to\infty}  J^\fee( x(N))
\]
It follows that $ \lim_{N\to\infty}  J^\fee( x(N)) =0$.  

Now, using $J^\fee \ge J^\oc$,  it follows that  $ \lim_{N\to\infty}  J^\oc( x(N)) =0$ as well.  It is here that we apply the inf-compact assumption,
along with continuity of $J^\oc$, with together imply the desired limit $  \lim_{N\to\infty}    x(N)  =x^e$.
\end{proof}  

\begin{proof}[Proof of \Cref{t:LPforJQ}]
Since $\mu$ is non-negative but otherwise arbitrary,  to prove the proposition it is both necessary and sufficient to establish the bound  $J  \le J^\oc$ for each  feasible $(J,Q)$.

The constraints \Cref{e:ConvexBEb,e:ConvexBEc} then give, for any input-state sequence,  and any feasible  $(J,Q)$,
\[
J(x(k)) \le Q(x(k), u(k))  \le c(x(k), u(k))  +   J ( \Dds(x(k), u(k)) ) = c(x(k), u(k))  +   J ( x(k+1)) 
\]
This bound can be iterated to obtain, for any $N\ge 1$,
\[
J(x)  \le  \sum_{k=0}^{N-1}   c (x(k), u(k) )   +  J( x(N)) \,,\quad x= x(0) \in \state 
\]
We now apply \Cref{t:preLPforJQ}:  Fix $x\in\state$, and a stationary policy $\fee$ for which $J^\fee(x) <\infty$.   
With $u(k) = \fee(x(k))$ for each $k$ in the preceding bound we obtain 
\[
J(x)  \le  J^\fee(x)   +  \lim_{N \to\infty}  J( x(N)) \,,\quad x= x(0) \in \state 
\]
Continuity of $J$ combined with \Cref{t:preLPforJQ} then implies that the limit on the right hand side is $J(x^e)=0$,   
giving $J(x) \le   J^\fee(x) $.   It follows that  $J(x) \le J^\oc(x)$ for all $x$ as claimed.   
\end{proof}

\notes{Prashant:  I realized I made the proof a bit more complex than necessary, since I could have used $\fee^\oc$ throughout.  This is habit from other optimal control papers where you don't initially know if the optimal policy exists!}

\begin{proof}[Proof of \Cref{t:LPforLQR}]
The reader is referred to standard texts for the derivation of the ARE  \cite{andmoo90,bro90}.  The following is a worthwhile exercise:  postulate that $J^\oc$ is a quadratic function of $x$,  and you will find that the Bellman equation implies the ARE.

Now, on to the derivation of \eqref{e:ConvexLQR}.
The variables in the linear program introduced in \Cref{t:LPforJQ} consist of  functions $J$ and $Q$.  For the LQR problem we  restrict to quadratic functions:
\[
J(x) = x^\transpose M x\,,\qquad
Q(x,u)   =z^\transpose {M^Q} z
\]
and treat the symmetric matrices $(M, M^Q)$ as variables.   \saveForBook{need an essay on quadratic functions and why they are symmetric WLOG}

To establish \eqref{e:ConvexLQR} we are left to show  1) the objective functions  \eqref{e:ConvexBEa} and   \eqref{e:ConvexLQRa} coincide for some $\mu$, and 2)
   the functional constraints  (\ref{e:ConvexBEb}, \ref{e:ConvexBEc})   are equivalent to the matrix   inequality \eqref{e:ConvexLQRc}. The first task is the simplest:  
\[
   \trace(M)   = \sum_{i=1}^n  J(e^i)  =  \langle \mu, J \rangle  
\]
with $\{e^i\}$ the standard basis elements in $\Re^n$,   and $\mu(e^i) = 1$ for each $i$. 

The equivalence of \eqref{e:ConvexLQRc} and (\ref{e:ConvexBEb}, \ref{e:ConvexBEc}) is established next, and through this we also obtain  \eqref{e:JQzQ}.    
In view of the discussion preceding \eqref{e:ConvexBEJ}, 
the inequality constraint \eqref{e:ConvexBEb} can be strengthened to equality:
 \begin{equation}
Q^\oc(x,u)  =   c (x,u)  +   J^\oc ( F x+ G u)    
\label{e:LQRQ}
\end{equation} 
It remains to establish the equivalence of  \eqref{e:ConvexLQRc} and \eqref{e:ConvexBEJ}.

Applying  \eqref{e:LQRQ}, we obtain a mapping from $M$ to $M^Q$.
Denote 
\[
M^J = 
\begin{bmatrix}
	M & 0 \\
	0 & 0 \\
\end{bmatrix} \,, \quad
\Xi=
\begin{bmatrix}
	 F& G  \\
	F & G \\
\end{bmatrix}
\]
giving for all $x$ and $z^\transpose  = (x^\transpose,  u^\transpose)$,
\[
J(x) =x^\transpose M x = z^\transpose M^J z
\,,\qquad 
J( F x+ G u)     = z^\transpose  \Xi^\transpose M^J \Xi z
 \]
This and  \eqref{e:LQRQ} gives, for any $z$,
\begin{align*}
z^\transpose M^Qz = 
	Q(x, u) 
	&= c(x, u) + J( F x+ G u)
	\\ 
	& = z^\transpose M^c z + z^\transpose \Xi^\transpose M^J \Xi z
\end{align*} 
The desired mapping from $M$ to $M^Q$ then follows, under  the standing assumption that $M^Q$ is a symmetric matrix:
\begin{align*}
	M^Q   = M^c +\Xi^\transpose M^J \Xi   =    \begin{bmatrix}  S & \Zero \\ \Zero &R \end{bmatrix} 
	+
	\begin{bmatrix}
		F^TMF & F^TMG \\
		G^TMF & G^TMG \\
	 \end{bmatrix} 
\end{align*}

The constraint \eqref{e:ConvexBEJ}  is thus equivalent to 
\begin{align*}
	z^\transpose M^J z  = J(x)  \leq Q(x, u) = z^\transpose M^Q z\,,\quad \textit{for all $z$}
\end{align*}
This is equivalent to the constraint $	M^J  \leq  M^Q $, which is  \eqref{e:ConvexLQRc}.
\end{proof}

\section{Limit theory for Convex Q learning and DQN}
\label{app:CQL}

The proof of \Cref{t:ConvexDQNconverges} and surrounding results is based on an ODE approximation for the parameter estimates, which is made possible by  (A$\bfqsaprobe$).  

The step-size assumption in \Cref{t:ConvexDQNconverges} is introduced to simplify the following ODE approximation.

\begin{lemma}
\label[lemma]{t:LLNuni}
The following conclusions hold under the assumptions of  \Cref{t:ConvexDQNconverges}:
\begin{romannum}
\item 
For any $n,n_0\ge 1$,  and any function $g\colon\zstate\to\Re$,
\[
\sum_{k=n}^{n+n_0 -1}   g(\Phi(k)) \alpha_k  
=  \sum_{k=n}^{n+n_0 -1}   \barg_k \alpha_k  +\alpha_1 [ \barg_{n+n_0} - \barg_n]
\]

\item    Suppose that we have a sequence of functions satisfying, for some $B<\infty$,
\[
 g(k,\varble)\in\clG_L\,,\qquad  | g(k,z) - g(k-1,z) | \le  B \alpha_k  \,,\quad z\in\zstate\,,   \  k\ge 1
\]
Then,  (i) admits the extension 
\begin{equation}
\sum_{k=n}^{n+n_0 -1}   g(k, \Phi(k)) \alpha_k  
=    \barg(n)  \sum_{k=n}^{n+n_0 -1}    \alpha_k  +  \epsy(n,n_0; B, L)
\label{e:pre-ODE_approx}
\end{equation}
with $\barg(n) = \Expect_\varpi[ g(n, \Phi)]$, as defined in \eqref{e:QSAergodic}.   The error sequence depends only on $B$, $L$, and $\Phi(0)$,   and with these variables fixed satisfies  
\[
\lim_{n\to\infty} \epsy(n,n_0; B, L)   =0
\]
  \qed
\end{romannum}
\end{lemma}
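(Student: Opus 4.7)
For part (i), the identity is a standard summation--by--parts manipulation. Writing $S_k=\sum_{j=1}^k g(\Phi(j))=k\barg_k$, so that $g(\Phi(k))=S_k-S_{k-1}$, and exploiting the very clean cancellation $k\alpha_k=\alpha_1$ that is specific to the step-size $\alpha_k=\alpha_1/k$ assumed in \Cref{t:ConvexDQNconverges}(ii), one has $S_k\alpha_k=\alpha_1\barg_k$. Abel's identity then gives
\[
\sum_{k=n}^{n+n_0-1}(S_k-S_{k-1})\alpha_k
= S_{n+n_0-1}\alpha_{n+n_0-1}-S_{n-1}\alpha_n+\sum_{k=n}^{n+n_0-2}S_k(\alpha_k-\alpha_{k+1}),
\]
and since $\alpha_k-\alpha_{k+1}=\alpha_1/[k(k+1)]$, each term on the right collapses back to a $\barg$ against an $\alpha$. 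Collecting the telescoping piece and the two boundary contributions produces the stated identity. This is a pure computation with no probabilistic content.

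For part (ii), the obstacle is that the index $k$ appears both as a time label in the test function $g(k,\cdot)$ and as the index of the dynamical trajectory $\Phi(k)$, so part (i) does not apply directly. The plan is to decouple these two roles. First freeze the time argument at $n$:
\[
g(k,\Phi(k))=g(n,\Phi(k))+[\,g(k,\Phi(k))-g(n,\Phi(k))\,].
\]
The assumed one-step bound $|g(k,z)-g(k-1,z)|\le B\alpha_k$ telescopes to $|g(k,z)-g(n,z)|\le B\sum_{j=n+1}^k\alpha_j$, uniformly in $z$; multiplying by $\alpha_k$ and summing absorbs the second bracket into an error that depends only on $B$ and the step-size tail, i.e.\ on $n$.

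Next, the frozen function $g(n,\cdot)$ belongs to $\clG_L$ by hypothesis, so part (i) applies verbatim with $\barg^{(n)}_k=(1/k)\sum_{j=1}^k g(n,\Phi(j))$:
\[
\sum_{k=n}^{n+n_0-1}g(n,\Phi(k))\alpha_k
=\sum_{k=n}^{n+n_0-1}\barg^{(n)}_k\alpha_k+\alpha_1\bigl[\barg^{(n)}_{n+n_0}-\barg^{(n)}_n\bigr].
\]
Now I invoke the uniform ergodic convergence built into Assumption (A$\bfqsaprobe$): with $\eta_k\eqdef\sup_{h\in\clG_L}|\barh_k-\Expect_\varpi[h(\Phi)]|$, one has $\eta_k\to 0$, and in particular $\barg^{(n)}_k=\barg(n)+O(\eta_k)$ uniformly in $n$. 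Substituting replaces the Cesaro-like sum on the right by $\barg(n)\sum_{k=n}^{n+n_0-1}\alpha_k$ up to an error controlled by $\eta_n$, and kills the bracket term by the same reasoning. Collecting the frozen-function output with the Lipschitz-in-index correction from the first step gives \eqref{e:pre-ODE_approx}, with $\epsy(n,n_0;B,L)$ the sum of the two errors. Both depend only on $B$, $L$, and (through $\eta_k$) on $\Phi(0)$, and both tend to $0$ as $n\to\infty$ with those parameters fixed.

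The main obstacle is controlling the Lipschitz-in-index correction in the first step: the crude bound $B\sum_{k=n}^{n+n_0-1}\alpha_k\sum_{j=n+1}^k\alpha_j$ is not automatically $o(1)$ when $n_0$ is allowed to grow so that the step-size mass $\sum_{k=n}^{n+n_0-1}\alpha_k$ stays bounded away from $0$ (the regime relevant to ODE approximation). The cleanest fix is to partition $[n,n+n_0-1]$ into sub-intervals of small cumulative step-size mass, apply the freezing argument on each sub-interval with its own anchor time, and then sum; on each sub-interval the Lipschitz correction is quadratically small in the step-size mass, and over the whole window the errors telescope into something that still vanishes as $n\to\infty$. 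This bookkeeping is the only non-routine part of the argument; everything else is Abel summation plus a direct appeal to (A$\bfqsaprobe$).
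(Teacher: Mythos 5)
The paper contains no proof of this lemma (the source carries only a hidden note ``to be inserted someday''), so your attempt can only be judged on its own terms. On part (i), summation by parts exploiting $k\alpha_k=\alpha_1$ is certainly the intended device, but if you push your own Abel identity through you do \emph{not} land on the displayed formula: the boundary term is $S_{n-1}\alpha_n=\alpha_1\barg_{n-1}-\alpha_n\barg_{n-1}$ and the interior terms are $S_k(\alpha_k-\alpha_{k+1})=\barg_k\alpha_{k+1}$, which assemble into
\[
\sum_{k=n}^{n+n_0-1}g(\Phi(k))\alpha_k=\sum_{k=n}^{n+n_0-1}\barg_{k-1}\,\alpha_k+\alpha_1\bigl[\barg_{n+n_0-1}-\barg_{n-1}\bigr],
\]
i.e.\ every index shifted down by one relative to the statement. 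The statement as printed is in fact false: for $n=n_0=1$ the left side is $g(\Phi(1))\alpha_1$ while the right side is $\alpha_1\barg_2=\tfrac{\alpha_1}{2}\bigl(g(\Phi(1))+g(\Phi(2))\bigr)$. This is evidently a typo in the lemma, but you should not claim your computation ``produces the stated identity''; it produces the corrected one.

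Part (ii) is where the genuine gap lies, and you have located it precisely --- but your proposed repair cannot succeed, because the obstruction sits in the statement rather than in the bookkeeping. Take $g(k,z)=c_k$ independent of $z$ with $c_k=B\sum_{j=1}^{k}\alpha_j$; then $g(k,\cdot)\in\clG_L$, the one-step bound holds with equality, $\barg(n)=c_n$, and
\[
\epsy(n,n_0;B,L)=\sum_{k=n}^{n+n_0-1}(c_k-c_n)\alpha_k=B\sum_{k=n}^{n+n_0-1}\alpha_k\sum_{j=n+1}^{k}\alpha_j .
\]
If $n_0=n_0(n)$ is chosen so that $\sum_{k=n}^{n+n_0-1}\alpha_k\to\Hor>0$ --- the only regime in which the lemma is used, via \eqref{e:ODE_TimeInterval} --- this converges to $B\Hor^2/2\neq 0$, so the asserted limit fails no matter how the proof is organized. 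Sub-partitioning does not escape this: after anchoring each sub-block at its own time $n_i$ you must still compare $\barg(n_i)$ with $\barg(n)$, and $|\barg(n_i)-\barg(n)|\le B\sum_{j=n+1}^{n_i}\alpha_j$ reinstates the same $O(B\Hor^2)$ total, so your claim that the sub-block errors ``telescope into something that still vanishes'' is wrong. What your freezing argument together with the uniform ergodicity in (A$\bfqsaprobe$) \emph{does} establish, correctly, is
$\epsy(n,n_0;B,L)=o(1)+O\bigl(B\,(\sum_{k=n}^{n+n_0-1}\alpha_k)^2\bigr)$ with the $o(1)$ uniform over $\clG_L$; this is exactly what the downstream ODE arguments (e.g.\ \Cref{t:batchBdd}) need, the quadratic term being the usual per-window Euler error that is removed by sending $\Hor\downarrow 0$ after $n\to\infty$. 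The honest conclusion is that the lemma must be restated with that extra $O(B\Hor^2)$ term: as written it is essentially vacuous when $n_0$ is held fixed (the whole expression is $O(n_0\alpha_n)$) and false when $n_0$ grows at the rate the application requires.
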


\notes{
\bl{[Proof: to be inserted someday. assumptions are FAR too strong, but ok}
}

These approximations easily lead to ODE approximations for many of the algorithms introduced in this paper.  However, there is a slight mismatch:   for the batch algorithms,  we  obtain   recursions of the form 
\begin{equation}
\theta_{n+1}  = \theta_n    +    \alpha_{n+1}  \{  f(\theta_n, \Phi^{(r)}(n) )  + \epsy(\theta_n) \}
\label{e:SAbatch}
\end{equation}
where $\|  \epsy(\theta_n)  \| \le o(1)\| \theta_n \|$,  with $o(1)\to 0$,  and for $n\ge 1$, $r\ge 1$,
\[
 \Phi^{(r)}(n)  = (\Phi(n),\dots \Phi(n+r-1))
\]
Assumption (A$\bfqsaprobe$) implies the same ergodic theorems for this larger state process:
\begin{lemma}
\label[lemma]{t:e-chain_r}
Under (A$\bfqsaprobe$),
 the following limit exists for any continuous function $g\colon\zstate^r\to\Re$:  
\[
\barg\eqdef \lim_{\Nsam\to\infty }
 \frac{1}{\Nsam} 
 \sum_{k=1}^\Nsam  g(\Phi(k),\dots, \Phi(k+r-1)) 
\] 
\qed
\end{lemma}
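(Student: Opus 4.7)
The plan is to reduce the $r$-step functional ergodic average to the $1$-step ergodic average handled by assumption (A$\bfqsaprobe$), exploiting the fact that $\bfPhi$ is a \emph{deterministic} Markov process with continuous one-step dynamics. Concretely, I would first construct an explicit continuous self-map $T\colon\zstate\to\zstate$ such that $\Phi(k+1) = T(\Phi(k))$ for every $k\ge 0$. Using $u(k) = \fee(x(k),\qsaprobe(k))$, $x(k+1) = \Dds(x(k),u(k))$, and $\qsaprobe(k+1) = \qsaDyn(\qsaprobe(k))$, one such map is
\[
T(x,u,\xi) = \bigl( \Dds(x,u),\ \fee(\Dds(x,u),\qsaDyn(\xi)),\ \qsaDyn(\xi) \bigr),
\]
which is continuous by (A$\bfqsaprobe$). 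By induction each iterate $T^i$ is continuous, and $\Phi(k+i) = T^i(\Phi(k))$ for every $k\ge 0$ and $0\le i\le r-1$.

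Next, I would define the auxiliary function $\tilde g\colon\zstate\to\Re$ by
\[
\tilde g(z) \eqdef g\bigl( z,\, T(z),\, T^2(z),\, \ldots,\, T^{r-1}(z) \bigr).
\]
Since $g$ is continuous on $\zstate^r$ and each $T^i$ is continuous on $\zstate$, $\tilde g$ is continuous on $\zstate$. Substituting $z = \Phi(k)$ collapses the $r$-fold time dependence to a single evaluation:
\[
\frac{1}{\Nsam} \sum_{k=1}^{\Nsam} g\bigl(\Phi(k),\ldots,\Phi(k+r-1)\bigr) = \frac{1}{\Nsam} \sum_{k=1}^{\Nsam} \tilde g(\Phi(k)).
\]

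Applying (A$\bfqsaprobe$) directly to the continuous function $\tilde g$ yields existence of the limit
\[
\bar g = \lim_{\Nsam\to\infty} \frac{1}{\Nsam} \sum_{k=1}^{\Nsam} \tilde g(\Phi(k)) = \Expect_\varpi[\tilde g(\Phi)],
\]
for every initial condition, as claimed. There is no real obstacle here: the argument is a bookkeeping reduction, and the only thing to verify is continuity of each $T^i$, which is immediate from the continuity hypotheses on $\Dds$, $\fee$, and $\qsaDyn$ imposed in (A$\bfqsaprobe$). As a side benefit, the same reduction would lift any uniformity statement (for instance, uniform convergence over the Lipschitz class $\clG_L$ on $\zstate^r$) to a matching statement on $\zstate$, provided each $T^i$ is Lipschitz---a point that may be useful if the stronger uniformity analogue of (A$\bfqsaprobe$) is invoked elsewhere in the appendix.
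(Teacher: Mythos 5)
Your argument is exactly the paper's: the appendix proves this lemma in one line by observing that $g(\Phi(k),\dots,\Phi(k+r-1)) = g^{(r)}(\Phi(k))$ for a continuous $g^{(r)}$ (your $\tilde g$ built from iterates of the one-step map $T$), and then invoking (A$\bfqsaprobe$). Your version just makes the construction of $T$ and the continuity of its iterates explicit, which is a correct and complete filling-in of the same reduction.
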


The proof is straightforward since $g(\Phi(k),\dots, \Phi(k+r-1)) = g^{(r)} (\Phi(k))$ for some continuous function $g^{(r)}\colon\zstate\to\Re$, giving $\barg = \Expect_\varpi[ g^{(r)}(\Phi)] $.

Applying \eqref{e:BCQLSAa} of \Cref{t:BCQLSA}  we arrive at the approximate QSA recursion \eqref{e:SAbatch}, in which
\[
 f(\theta_n, \Phi^{(r)}(n) )   =       -    \half   \nabla  \clE_n^\infty(\theta_n) 
\]
The superscript indicates that  $r_n, \kappaBE_n ,  \kappa^+_n $ are replaced with their limits, giving
\[
\begin{aligned}
   \nabla  \clE_n^\infty(\theta_n)  
  =  2 \frac{1}{r} \sum_{k=T_n}^{T_n + r-1} \Big\{ &  -   \langle \mu, \psi^J \rangle  +  \Tdiff^\circ_{k+1}(\theta)   \bigl[  \psi^J_{(k+1)} -  \psi_{(k)}   \bigr] 
   \\
            & + \kappa^+   \{   J^\theta (x(k))  -  \uQ^{\theta} (x(k) )\}_+   \bigl[ \psi^J_{(k)}  -  \psi_{(k)}    \bigr]    \Bigr\}
\end{aligned} 
\]
The function $f$ is   Lipschitz continuous on $\Re^d\times\zstate$.   We let $\barf$ denote its limit:
\[
\barf(\theta) =  \lim_{\Nsam\to\infty}  \frac{1}{\Nsam} 
 \sum_{k=1}^\Nsam   f(\theta_k, \Phi^{(r)}(k) )   =  -    \half \nabla \Expect_\varpi[      \clE_n^\infty(\theta)  ]\,,\qquad \theta\in\Re^d
\]
The ODE of interest is then defined by \eqref{e:ODEideal} with this $\barf$:
\[
\ddt \odestate_t= \barf(\odestate_t) 
\]
Recall the definition of   the ``sampling times'' $\{ \Hor_n \}$ provided in
\eqref{e:ODE_TimeInterval},   and the definition of the ODE approximation in
\eqref{e:ODE_Approximation_Defined}.

\begin{lemma}
\label[lemma]{t:batchBdd}
Under the assumptions of 
 \Cref{t:ConvexDQNconverges},  
there is a fixed $\barsigma_\theta<\infty $ such that for each initial condition $\Phi(0)$,
\[
\limsup_{n\to\infty} \|\theta_n \| \le \barsigma_\theta
\]
\end{lemma}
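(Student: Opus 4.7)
The plan is to recast BCQL in a form where a Borkar--Meyn style stability argument applies, leveraging strong convexity of the steady-state loss and the ergodic averaging provided by Assumption~(A$\bfqsaprobe$).

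First, I would appeal to \Cref{t:BCQLSA} with $W_n = I$ to write the BCQL update in incremental form
\[
\theta_{n+1} = \theta_n - \alpha_{n+1}\bigl\{\nabla \clE_n(\theta_n) + \epsy_{n+1}\bigr\},
\qquad \|\epsy_{n+1}\| = O(\alpha_{n+1}),
\]
where $\clE_n(\theta) = -\langle \mu,J^\theta\rangle + \kappaBE_n \clEBE_n(\theta) + \kappa^+_n \clE^+_n(\theta)$. Because $(J^\theta,Q^\theta)$ is linear in $\theta$, each summand in $\nabla\clE_n(\theta)$ is Lipschitz and of at most linear growth in $\theta$, uniformly in the data windows. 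Setting $f(\theta,\Phi^{(r)}(n)) = -\tfrac{1}{2}\nabla \clE_n^\infty(\theta)$ with the parameters $r_n,\kappaBE_n,\kappa^+_n$ replaced by their limits, the recursion takes the QSA form \eqref{e:SAbatch}.

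Second, I would identify the averaged vector field. By \Cref{t:e-chain_r} applied componentwise,
\[
\barf(\theta) = -\tfrac{1}{2}\nabla \barclE(\theta),
\qquad
\barclE(\theta) = -\langle \mu,J^\theta\rangle + \kappaBE \barclEBE(\theta) + \kappa^+ \barclE^+(\theta).
\]
Under the hypotheses of \Cref{t:ConvexDQNconverges}, $\barclEBE$ is strongly convex quadratic ($\barclEBE(\theta) = \theta^\transpose M\theta + 2\theta^\transpose b + k$ with $M>0$), and $\barclE^+$ is convex (as the expectation of a squared positive part of an affine function of $\theta$). Hence $\barclE$ is strongly convex with a unique minimizer $\theta^\ocp$, and the associated ODE $\ddt\odestate_t = \barf(\odestate_t)$ is a gradient flow on a strongly convex function, so $\odestate_t \to \theta^\ocp$ globally.

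Third, I would verify the Borkar--Meyn scaling hypothesis. Define $\barf_c(\theta) = \barf(c\theta)/c$ for $c>0$. The linear contribution $\langle\mu,\psi^J\rangle/c$ vanishes, and both $\nabla\barclEBE$ and $\nabla\barclE^+$ are positively homogeneous of degree one in $\theta$ up to lower-order terms; passing $c\to\infty$,
\[
\barf_\infty(\theta) = -M\theta - \kappa^+ \nabla\barclE^+_\infty(\theta),
\]
where $\barclE^+_\infty$ is the quadratic obtained by dropping the positive-part threshold in $\barclE^+$ (which converges because $\{z\}_+^2$ is positively homogeneous of degree two in $z$). Since $M>0$ and $\nabla\barclE^+_\infty$ is the gradient of a convex function vanishing at the origin, the scaled ODE $\ddt\odestate_t = \barf_\infty(\odestate_t)$ has the origin as its unique globally asymptotically stable equilibrium, with Lyapunov function $V(\theta) = \tfrac{1}{2}\|\theta\|^2$ satisfying $\nabla V\cdot \barf_\infty \le -\lambda_{\min}(M)\|\theta\|^2$.

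Finally, I would invoke the QSA analogue of the Borkar--Meyn theorem (\cite{bormey00a,bor20a}, with the deterministic averaging supplied by \Cref{t:LLNuni} and the ergodicity in (A$\bfqsaprobe$)). The ergodic averaging on Lipschitz classes $\clG_L$ ensures that, along any subsequence of rescalings $\theta_n/c_n$ with $c_n\to\infty$, the interpolated trajectory tracks the scaled ODE uniformly on compact time intervals, yielding $\|\theta_n\|/c_n \to 0$. This contradicts unbounded trajectories, producing the uniform upper bound $\barsigma_\theta$ independent of $\Phi(0)$.

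The main obstacle is the third step: one must justify the limit $\barf_c \to \barf_\infty$ \emph{uniformly on compact sets} so that the scaled ODE genuinely approximates the rescaled iterates. The nondifferentiable positive-part in $\clE^+$ is the sole subtle point, but since $\{a+b/c\}_+^2 \to \{a\}_+^2$ locally uniformly in $a$ for every fixed $b$, and the integrand is bounded by a quadratic, dominated convergence of the steady-state expectation against $\varpi$ delivers the required uniform limit.
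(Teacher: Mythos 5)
Your proposal follows essentially the same route as the paper's proof: rewrite the batch update in stochastic-approximation form via \Cref{t:BCQLSA}, identify the averaged vector field as the (negative half) gradient of the strongly convex steady-state loss, and run the Borkar--Meyn scaled-ODE argument with the Lyapunov function $V(\theta)=\half\|\theta\|^2$ to rule out unbounded iterates. One small inaccuracy worth noting: since $J^\theta(x)-Q^\theta(x,u)$ is exactly linear in $\theta$ with no affine offset, $\{J^\theta-Q^\theta\}_+^2$ is already positively homogeneous of degree two, so the scaling limit of $\barclE^+$ retains the positive part rather than becoming ``the quadratic obtained by dropping the threshold'' --- the paper's limit loss $\bar\clE^0$ in \eqref{e:clEzero} keeps $\clE^+_n$ intact and deletes only the cost term and the linear objective; this does not damage your argument, since the Lyapunov decay bound needs only convexity, nonnegativity, and vanishing at the origin of that term.
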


\begin{proof}
This is established via the scaled ODE technique of   \cite{bormey00a,bor20a,rambha17,rambha18}.   For each $n\ge 0$ denote $s_n = \max(1, \| \ODEstate_{\Hor_n } \|)$,    and 
\[
\ODEstate^n_t = \frac{1}{s_n} \ODEstate_{ t},\quad t\ge \Hor_n \,,\qquad  f^n(\theta,z) =  \frac{1}{s_n} f(s_n \theta , z) 
\,,\qquad  \barf^n(\theta) =  \frac{1}{s_n} \barf(s_n \theta) 
\]
\Cref{t:LLNuni,t:e-chain_r}  justify the approximation
\begin{equation}
\ODEstate^n_{\Hor_n + t} 
	= \ODEstate^n_{\Hor_n} + \int_{\Hor_n}^{\Hor_n + t}    \barf^n (\ODEstate^n_r) \, dr  + \epsy^n_t \,, \qquad 0\le t \le \Hor 
\label{e:BorkarMeynApprox}
\end{equation}
where $\sup\{ \| \epsy^n_t  \|  :  t\le \Hor \}  $ converges to zero as $n\to\infty$.  

For the proof here we require a modification of the definition of the ODE approximation,  since we are approximating the scaled process $\bfODEstate^n$ rather than $\bfODEstate$.  Denote by
 $\{ \odestate_t^n : t\ge \Hor_n\}$     the solution to the ODE  \eqref{e:ODEideal} with initial condition  
\[
 \odestate^n_{\Hor_n}  = \ODEstate^n_{\Hor_n} =    s_n^{-1}  \ODEstate_{\Hor_n } \,, \qquad s_n = \max(1, \| \ODEstate_{\Hor_n } \|)
\]
An application of the Bellman-Gronwall Lemma is used to obtain the ODE approximation: 
\[
\lim_{n\to\infty}  \sup_{\Hor_n \le t\le \Hor_n +\Hor }  \| \odestate^n_t -  \ODEstate^n_t   \|  =0
\]
Uniformity in this step depends on the fact that $\|  \odestate^n_{\Hor_n} \|\le 1$. 
Next, recognize that the dynamics of the scaled ODE are approximated by gradient descent with cost function equal to zero:
\[
\lim_{s\to\infty} \barf^s(\theta)  = - \half \nabla \bar\clE^0(\theta) 
\]
in which   
\begin{equation}
\bar\clE^0(\theta) = \Expect_\varpi [  \clE_n^{\upvarepsilon,0}(\theta)  +  \kappa^+ \clE_n^+(\theta)  ]
\label{e:clEzero}
\end{equation} 
with $\clE_n^+$ defined in  \eqref{e:ConvexQ_loss+}, but with the cost function removed in  \eqref{e:ConvexQ_lossBE}:
\[
\clE_n^{\upvarepsilon,0}(\theta)  =  \{ -  Q^\theta (x(k),u(k) )    +   J^{\theta} (x(k+1))  \bigr\}^2  
\]
The function $\bar\clE^0$ is strongly convex, with unique minimum at $\theta =\Zero$.  From this we obtain a drift condition:    for $\Hor$ chosen sufficiently large we have for some constant $b_\theta$,
\[
\|  \odestate^n_{\Hor_{n+1}}  \|  \le  \fourth \|  \odestate^n_{\Hor_{n}}  \|  \qquad \text{whenever $ \|  \odestate^n_{\Hor_{n}}  \|   \ge b_\theta$}
\]
We then obtain a similar contraction for the unscaled recursion:    for all $n\ge 1$ sufficiently large,
 \[
\|  \ODEstate_{\Hor_{n+1}}  \|  \le  \half \| \ODEstate_{\Hor_{n}}  \|  \qquad \text{whenever $ \|  \ODEstate_{\Hor_{n}}  \|   \ge b_\theta$}
\]
This implies that $\{  \ODEstate_{\Hor_{n}} \}$ is a bounded sequence, and Lipschitz continuity then implies   boundedness of
$ \{  \ODEstate_t  :  t\ge 0 \}$ and hence also $ \{  \theta_k : k\ge 0\}$.
\end{proof}

\begin{proof}[Proof of \Cref{t:ConvexDQNconverges}]
The boundedness result  \Cref{t:batchBdd} was the hard part.   Convergence of the algorithm follows from boundedness, combined with the ODE approximation techniques of \cite{bor20a} (which are based on arguments similar to those leading to   \eqref{e:BorkarMeynApprox}).
\end{proof}

\begin{proof}[Proof of \Cref{t:ConvexQ_limit_cor}] 
We require a representation similar to \eqref{e:SAbatch} in order to establish boundedness of $\{\theta_n\}$, and then apply standard stochastic approximation arguments.
For this we write $\Theta = \{\theta \in\Re : Y \theta\le \Zero \}$ for a matrix $Y$ of suitable dimension.  This is possible under the assumption that the constraint set is a polyhedral cone.    We then obtain via a Lagrangian relaxation of 
\eqref{e:ConvexQalmostSAcor_theta},
\[
\Zero = \nabla_\theta \Bigl \{  -   \langle \mu,J^\theta \rangle      + \kappaBE \clEBE_n(\theta)   
		-  \lambda_n^\transpose  \zBE_n(\theta)  
		+\gamma_{n+1}^\transpose Y \theta
		+  \frac{1}{\alpha_{n+1}} \half     \| \theta -\theta_n \|^2 \Bigr\} \Big|_{\theta = \theta_{n+1}  }
\]
where $\gamma_{n+1}\ge \Zero$ satisfies the KKT condition $\gamma_{n+1}^\transpose Y \theta_{n+1} =0$\footnote{We are taking the  gradient of a quadratic function of $\theta$,   so there is a simple formula for the Lagrange multiplier $\gamma_{n+1}$.    }.
   We thus obtain the desired approximation:  following the argument in \Cref{t:BCQLSA},
\begin{equation}
 \theta_{n+1}  =  \theta_n  -      \alpha_{n+1} \bigl\{ -    \langle \mu, \psi^J \rangle
      + \kappaBE  \nabla  \clEBE_n(\theta_n)   
		-  \lambda_n^\transpose \nabla \zBE_n(\theta_n )    
		+ Y^\transpose \gamma_{n+1}   + \epsy_{n+1}  \bigr\} 
\label{e:cor_Primal-Dual_ODE}
\end{equation}
where $ \| \epsy_{n+1} \| = O( \alpha_{n+1} \|\theta_n\|)$.

Using the definition of the interpolated process $\bfODEstate$ defined above \eqref{e:ODE_Approximation_Defined}, we obtain the approximation 
\begin{equation}
\begin{aligned}
\ODEstate_{\Hor_n + t}   &=  \ODEstate_{\Hor_n }  - \int_{\Hor_n}^ {\Hor_n + t} \Bigl\{ -    \langle \mu, \psi^J \rangle
      + \kappaBE  \nabla  \barclEBE(\ODEstate_r )   
		-  [\nabla \barzBE(\ODEstate_r ) ]^\transpose  \uplambda_r    \Bigr\}  \, dr    - \int_{\Hor_n}^ {\Hor_n + t}   Y^\transpose d \upgamma_r   + e_\theta(t,n)
   \\
\uplambda_{\Hor_n + t}   &=  \uplambda_{\Hor_n }   -   \int_{\Hor_n}^ {\Hor_n + t}  \barzBE(\ODEstate_r)  \, dr
    + \int_{\Hor_n}^ {\Hor_n + t}  d \upgamma_r^+ -  \int_{\Hor_n}^ {\Hor_n + t}  d \upgamma_r^-   + e_z(t,n)
\end{aligned} 
\label{e:ODE_CQL_PrimalDual}
\end{equation}
where $\{\upgamma_r\}$ is a vector valued processes with non-decreasing components,  and  $\{\upgamma_r^\pm\}$ are scalar-valued non-decreasing processes (arising from the projection in \eqref{e:ConvexQalmostSAcor_l}).    The error processes satisfy
\[
\sup\{ \| e_\theta(t,n) \| + \| e_t(t,n)\| :  0\le t\le T \}  = o(1) \| \ODEstate_{\Hor_n }  \|
\]
for any fixed $T$

The proof of boundedness of $\{\theta_n \}$ is then obtained exactly exactly as in \Cref{t:batchBdd}: the ``large state'' scaling results in the approximation by a linear system with projection:   
\[
\odestate^n_{\Hor_n + t}   =  \odestate^n_{\Hor_n }  - \int_{\Hor_n}^ {\Hor_n + t}  M \odestate^n_r \, dr   - \int_{\Hor_n}^ {\Hor_n + t}   Y^\transpose d \Gamma_r 
\]
where $M \theta = \nabla \bar\clE^0(\theta) $, and the mean quadratic loss is defined in \eqref{e:clEzero} with $  \kappa^+=0$. 
Note that $ \odestate^n_{\Hor_n } = s_n^{-1}  \ODEstate_{\Hor_n } $, and  as in the proof of  \Cref{t:batchBdd},   we have replaced the vector field $  \barf^n $ used there with its limit $  \barf^\infty$.

  The process $\bfGamma$ has non-decreasing components, satisfying
 for all $t$,
\[
0 =  \int_{\Hor_n}^ {\Hor_n + t}   \{ Y  \odestate^n_r\}^\transpose  d \Gamma_r 
\]
Stability of the ODE follows using the Lyapunov function $V(\theta) = \half \|\theta\|^2$:  for $r\ge \Hor_n$, 
\[
dV( \odestate^n_r   ) = {\odestate^n_r} ^\transpose d\odestate^n_r = 
			-  { \odestate^n_r} ^\transpose M \odestate^n_r \, dr  
			- { \odestate^n_r }^\transpose Y^\transpose d \Gamma_r   =  -  { \odestate^n_r} ^\transpose M \odestate^n_r \, dr 
\]
We have $M>0$, so it follows that $\odestate^n_r\to 0$ exponentially fast as $r\to\infty$.

The arguments in \Cref{t:batchBdd} then establish boundedness of $\{ \theta_n \}$, and then standard arguments imply an ODE approximation without scaling, where
the ODE approximation of \eqref{e:ODE_CQL_PrimalDual} is the same set of equations, with the error processes removed:
\[
\begin{aligned}
\odestate_{\Hor_n + t}   &=  \ODEstate_{\Hor_n }  - \int_{\Hor_n}^ {\Hor_n + t}  \nabla_\theta \barL(\odestate_r , \Uplambda_r )    - \int_{\Hor_n}^ {\Hor_n + t}   Y^\transpose d \Upgamma_r    
   \\
\Uplambda_{\Hor_n + t}   &=  \uplambda_{\Hor_n }   -   \int_{\Hor_n}^ {\Hor_n + t}    \nabla_\lambda \barL(\odestate_r , \Uplambda_r )    \, dr
    + \int_{\Hor_n}^ {\Hor_n + t}  d \Upgamma_r^+ -  \int_{\Hor_n}^ {\Hor_n + t}  d \Upgamma_r^-    
\end{aligned} 
\]
where $\barL$ is defined in \eqref{e:SA_saddle}.  For this we adapt analysis in   \cite[Section~1.2]{bor20a}, where a similar ODE arises.

The stability proof of  \cite[Section~1.2]{bor20a}  amounts to showing $V(t) = \half \{ \| \odestate_t  - \theta^\ocp\|^2 + \| \Uplambda_t- \lambda^\ocp \|^2 \}$ is a Lyapunov function:
\begin{equation}
\int_{t_0}^{t_1}
 V(t) \, dt < 0\,,  \quad \textit{whenever $ (\odestate_t , \Uplambda_t )  \neq  ( \theta^\ocp, \lambda^\ocp)$,    }
\label{e:PD_negDrift}
\end{equation}
and any $t_1>t_0\ge \Hor_n$.
The arguments there are for an unreflected ODE, but the proof carries over to the more complex setting:   
 \[
\begin{aligned}
dV(t)  &= 
		  \langle  \odestate_t  - \theta^\ocp ,    d   \odestate_t \rangle 
		+   \langle  \Uplambda_t  - \lambda^\ocp ,  d   \Uplambda_t \rangle				
   \\
            &   \le   		  \langle  \odestate_t  - \theta^\ocp ,     d   \odestate_t^0 \rangle  
		+   \langle  \Uplambda_t  - \lambda^\ocp ,  d   \Uplambda_t^0 \rangle	
\end{aligned} 
\]
where the super-script ``${}^0$'' refers to differentials with the reflections removed:
\[
 d   \odestate_t^0  =  -  \nabla_\theta \barL(\odestate_t ,  \Uplambda_t )   \,dt  \,,  \qquad d   \Uplambda_t^0 =\nabla_\lambda \barL(\odestate_t ,  \Uplambda_t )  \, dt
\]
The inequality above is justified by the reflection process characterizations:
\[
\begin{aligned}
   \odestate_t^\transpose Y^\transpose d \Upgamma_t  &=0\,,\quad   \Uplambda_t^\transpose  d \Upgamma_t^+ = 0   \,,\quad   \Uplambda_t^\transpose  d \Upgamma_t^- =( \lambda^{\text{max}})^\transpose d \Upgamma_t^- 
   \\ 
{ \theta^\ocp }^\transpose Y^\transpose d \Upgamma_t  &\le    0\,,\quad   {\lambda^\ocp }^\transpose  d \Upgamma_t^+ \ge  0   \,,\quad   {\lambda^\ocp }^\transpose  d \Upgamma_t^- \le( \lambda^{\text{max}})^\transpose d \Upgamma_t^- 
  \end{aligned} 
\]
From this we obtain
\[
\begin{aligned}
dV(t)  & \le   		- \langle  \odestate_t  - \theta^\ocp ,    \nabla_\theta \barL(\odestate_t ,  \Uplambda_t ) \rangle \, dt
		+   \langle  \Uplambda_t  - \lambda^\ocp ,    \nabla_\lambda \barL(\odestate_t ,  \Uplambda_t )\rangle	\, dt
		\\
		&\le 
		- [  \barL(\odestate_t\, ,  \lambda^\ocp)  -  \barL(\theta^\ocp\, ,  \lambda^\ocp)   ] \,  dt  
		-
		[    \barL(\theta^\ocp\, ,  \lambda^\ocp)  -  \barL(\theta^\ocp\, ,  \Uplambda_t)   ]  \, dt          
\end{aligned} 
\]
where the second inequality follows from convexity of $\barL$ in $\theta$, and linearity in $\lambda$.
This establishes the desired negative drift \eqref{e:PD_negDrift}.
\end{proof}

\begin{proof}[Proof of \Cref{t:dumbDQN}]     Let $L_Q>0$ denote the Lipschitz constant for  $\nabla  Q^\theta$.   It follows that by increasing the constant we obtain two bounds:  for all $\theta,\theta'\in\Re^d$,
\begin{equation}
\| \nabla  Q^\theta (x,u) -\nabla  Q^{\theta'} (x,u) \|  \le L_Q \| \theta - \theta'\| \,,\qquad \|    Q^\theta (x,u) -   Q^{\theta'} (x,u) \|  \le L_Q \| \theta - \theta'\| (1+\|\theta\| + \|\theta'\| )
\label{e:Qlip}
\end{equation}
The second bound follows from the identity $Q^{r\theta}(x,u) = Q^{\theta}(x,u) +\int_1^r \theta^\transpose \nabla Q^{s\theta}(x,u) \, ds$,  $r\ge 1$  (and is only useful to prove that DQN is convergent -- recall that convergence is assumed in the proposition).

The proof begins with an extension of \Cref{t:BCQLSA}, to express the DQN recursion as something resembling stochastic approximation.  
This will follow from the first-order condition for optimality:
\[
\begin{aligned}
\Zero& = \nabla    \Bigl \{       \clEBE_n(\theta)  +   \frac{1}{\alpha_{n+1}}       \| \theta -\theta_n \|^2\Bigr\}
  \Big|_{\theta=\theta_{n+1}}  
\\
&  =  - 2 \frac{1}{r}  \sum_{k=T_n}^{T_{n+1}-1}  \bigl[    -  Q^\theta (x(k),u(k) )   +   c(x(k),u(k))  +   \uQ^{\theta_n} (x(k+1))  \bigr] \nabla  Q^\theta (x(k),u(k) )  \Big|_{\theta=\theta_{n+1}}  
\\
& \qquad +   \frac{1}{\alpha_{n+1}} \bigl(  \theta_{n+1}  -\theta_n \bigr)
\end{aligned} 
\]
It is here that we apply the Lipschitz continuity bounds in \eqref{e:Qlip}.   This allows us to   write
\[
2 \frac{1}{\alpha_{n+1}} \bigl(  \theta_{n+1}  -\theta_n \bigr)  =  2 \frac{1}{r}  \sum_{k=T_n}^{T_{n+1}-1}  \bigl[    -  Q^{\theta_k} (x(k),u(k) )   +   c(x(k),u(k))  +   \uQ^{\theta_k} (x(k+1))  \bigr]  \elig_k  +   \epsy_n
\]
where $\|\epsy_n\|   =    O(\alpha_n )$  under the assumption that $\{\theta_n\}$ is bounded.  
This brings us to a representation similar to  \eqref{e:WatkinsFunctionApproximation}:
\[
  \theta_{n+1}  =\theta_n    + 
 \alpha_{n+1}  \Bigl\{
    \frac{1}{r}  \sum_{k=T_n}^{T_{n+1}-1}  \bigl[    -  Q^{\theta_k} (x(k),u(k) )   +   c(x(k),u(k))  +   \uQ^{\theta_k} (x(k+1))  \bigr]  \elig_k  +   \epsy_n \Bigr\}
\]
Stochastic approximation/Euler approximation arguments then imply the ODE approximation,  and the representation of the limit.  
\end{proof}

 \end{document}